\numberwithin{equation}{section}
\newtheorem{theorem}{Theorem}[section]
\newtheorem{proposition}[theorem]{Proposition}
\newtheorem{corollary}[theorem]{Corollary}
\newtheorem{remark}[theorem]{Remark}
\newtheorem{lemma}[theorem]{Lemma}
\newtheorem{example}[theorem]{Example}
\newtheorem{definition}[theorem]{Definition}
\newtheorem{problem}[theorem]{Problem}
\newtheorem{maintheorem}[theorem]{Main Theorem}
\newtheorem{question}[theorem]{Question}
\def\FF{\mathcal{F}}
\def\R{\mathbb{R}}
\def\ZZ{\mathbb{Z}}
\def\QQ{\mathbb{Q}}
\def\CC{\mathbb{C}}
\def\C{\mathcal{C}}
\def\UU{\mathcal{U}}
\def\1{\mathbb{1}}
\def\aa{\mathfrak{a}}
\def\ll{\mathfrak{l}}
\def\gg{\mathfrak{g}}
\def\mm{\mathfrak{m}}
\def\nn{\mathfrak{n}}
\def\hh{\mathfrak{h}}
\def\VV{\mathcal{V}}
\def\bb{\mathfrak{b}}
\def\pp{\mathfrak{p}}
\def\R{\mathcal R}
\def\OO{\mathcal{O}}
 \def\zz{\mathfrak{z}}
\begin{document}

\title{Equivariant Quantized Symmetric Algebras}
\author{Sebastian Zwicknagl}
\maketitle

\begin{abstract}
Let $\gg$ be a Lie bialgebra and let $V$ be a finite-dimensional $\gg$-module. We study deformations of the symmetric algebra of $V$ which are equivariant with respect to an action of the quantized enveloping algebra $U_h(\gg)$, resp. $U_q(\gg)$ . We investigate, in particular, such quantizations obtained from the quantization of certain Lie bialgebra structures on the semidirect product of $\gg$ and $V$. We classify these structure in the important special case, when $\gg$ is complex, simple, with quasitriangular Lie bialgebra structure and $V$ is a simple g-module. We then introduce more a general notion, co-Poisson module algebras and their quantizations, to further address the problem and show that many known examples of quantized symmetric algebras can be described in this language.
\end{abstract}

\tableofcontents

 \section{Introduction}

The main objective of this paper is to study the following problem and provide a framework for its   solution.

\begin{problem}
\label{pr:equiv-defs}
Let $\gg$ be a Lie bialgebra over a field $k$ of characteristic $0$ and let $V$ be a finite-dimensional $\gg$-module.  Find all $U_h(\gg)$-equivariant (flat) deformations of the symmetric algebra $S(V)$.
\end{problem}
The problem is, apparently, non-trivial and interesting.  A. Berenstein and the author constructed in \cite{BZ} a natural quantum symmetric algebra, namely the braided symmetric algebra, for a finite dimensional module of the quantized enveloping algebra $U_q(\gg)$ of a reductive complex Lie algebra $\gg$. These algebras are (not necessarily flat) deformations of the corresponding classical symmetric algebras, and we showed that, if $\gg=sl_2$ and the modules under consideration are simple, then  the deformations are flat if and only if the dimension of the module was less than four.    Indeed, the deformation of the two-dimensional module is isomorphic to the (two-dimensional) quantum plane, while Vancliff \cite{Van} and Rossi-Doria  \cite{R-D} had previously studied the braided symmetric algebra of the four-dimensional simple $U_q(sl_2)$-module and shown that it was not a flat deformation. Vancliff, in particular, uncovered interesting non-commutative geometry associated with this algebra.

 In order to approach Problem \ref{pr:equiv-defs} we shall consider the following two-fold problem, which deals with one of the most interesting cases of Problem \ref{pr:equiv-defs}. Recall that  the semidirect product $\gg\ltimes V$ has a natural {\it inhomogeneous} Lie algebra structure with Lie bracket defined by
$$[g+v,g'+v']=[g,g']+g.v'-g'.v\ ,$$
 where for all $g,g'\in\gg$ and $v,v'\in V$. Indeed, $V$ is an Abelian Lie ideal in $\gg\ltimes V$ and there exists a semidirect factorization $U(\gg\ltimes V)=U(\gg)\otimes S(V)$. Additionally, recall that the universal enveloping algebra $U(\gg)$ of any Lie bialgebra $\gg$ can be quantized (see e.g. \cite{EK} and \cite{ES}).

\begin{problem}
\label{pr:equiv-defs-2}
  Let $\gg$ be a Lie bialgebra. For each $\gg$-module $V$:

(a) Find all Lie bialgebra structures on the semidirect product $\gg\ltimes V$ that are compatible with the Lie bialgebra structure on $\gg$. We refer to them as semidirect Lie bialgebra structures.

(b) Find all  quantized enveloping algebras $U_h(\gg\ltimes V)$ of $U(\gg\ltimes V)$  that admit a semidirect factorization into $U_h(\gg)$ and $S_h(V)$.
\end{problem}

We prove in Theorem \ref{th:qia} that if $\gg$ is semisimple and  $\gg\ltimes V$ is a semidirect Lie bialgebra then $U_h(\gg\ltimes V)$  admits a semidirect factorization into $U_h(\gg)$ and $S_h(V)$. Hence, if $\gg$ is semisimple, it suffices to solve Problem \ref{pr:equiv-defs-2}(a) in order to completely solve Problem \ref{pr:equiv-defs-2}. The following main result of the paper constitutes the solution to Problem \ref{pr:equiv-defs-2}(a) in the  important case when $\gg$ is a complex simple Lie algebra with the standard Lie bialgebra structure and $V$ a simple $\gg$-module.

\begin{maintheorem}(Theorem \ref{th:main})
\label{th:main-intro}
Let $\gg$ be a simple complex Lie algebra with the standard bialgebra structure and let $V$ be a nontrivial simple $\gg$-module.  Then there exists a central extension $\gg'\cong \gg\oplus \zz$ of $\gg$ such that  $\gg'\ltimes V$ admits a  semidirect   Lie bialgebra structure, if and only if the pair $(\gg,V)$ is  one of the following:

  (i) $(sl_n(\CC),V)$ where $V\in\{V,V^*,S^2V,S^2V^*,\Lambda^2 V,\Lambda^2 V^*\}$, where $V=\CC^n$ is the first fundamental $sl_n(\CC)$-module,

 (ii) the defining module of  $(so(n)$  or the spin modules  for $(so(10)$.

(iv)  the minuscule modules for $E_6$.
\end{maintheorem}

Note that these modules are exactly the geometrically decomposable modules listed by Howe \cite[ch.4]{Ho} and Stembridge \cite[Remark 2.3]{Stem}.

 Moreover, we prove in  Theorem \ref{th:main} that the same classification result also holds for all quasitriagular Lie bialgebra structures given by  Belavin-Drinfeld triples introduced in (\cite{BD}).

  Surprisingly, the classification in Theorem \ref{th:main-intro} almost coincides with the classification results of Theorems 1.1 and 1.2 in the author's earlier paper \cite{ZW}.  It turns out that if $\gg\ltimes V$ admits a semidirect  Lie bialgebra structure, then the classical $r$-matrix obtained from the Belavin-Drinfeld triple defines a Poisson bracket on the symmetric algebra $S(V)$. The corresponding Poisson structures were indeed classified in \cite{ZW}, and their symplectic foliations were studied by Goodearl and Yakimov in \cite{GY} in the case of the standard $r$-matrix.

 Theorem \ref{th:main-intro} has interesting connections to other areas of Lie theory, namely the pairs listed in Theorem \ref{th:main-intro}(i)-(iv) appear as geometrically decomposable modules in Classical Invariant Theory as studied by Howe (see e.g. Howe \cite{Ho}), and in the classification of the Hermitian Symmetric Spaces (see e.g. Howe \cite{Ho}). Moreover, the semidirect products  $(\gg\oplus \CC)\ltimes V$ can be interpreted as maximal parabolic subalgebras with Abelian radicals $\nn=V$ inside complex simple Lie algebras--thus establishing the connection to the minuscule Grassmannians.

Moreover, the quantizations of these symmetric algebras encompass many well-known quantized coordinate rings. Among them are, corresponding to the modules in Theorem \ref{th:main-intro}(i), the well known quantum planes, the quantum symmetric matrices introduced by  Noumi in \cite{Nou} and the quantum anti-symmetric matrices introduced by Strickland  in \cite{Str}. Similarly, one obtains quantum Euclidean space, introduced by   Faddeev, Reshitikhin and Takhtadzhyan \cite{RTF},   as the quantization of the symmetric algebras $(so(n),\CC^n)$ in Theorem \ref{th:main-intro}(ii).  Additionally, all of these quantized symmetric algebras can be interpreted as braided symmetric algebras; i.e., quantum analogs of symmetric algebras introduced by A. Berenstein and the author in \cite{BZ}.  If $\gg$ is quasitriangular, then the quantized symmetric algebras obtained as semidirect factorizations  of $U_h(\gg\ltimes V)$, can be interpreted as symmetric algebras in the associated co-boundary categories of  $U_h(\gg)$-modules, as is shown in Section \ref{se:qsa-coboundary}.

However, there are examples of equivariant deformations of symmetric algebras which do not correspond to semidirect Lie bialgebras, for instance the quantized symmetric algebra $S_h(sl_n)$ of the adjoint $sl_n$-module introduced by Donin in \cite{Don}.  To address these examples as well, we introduce the notion of {\it co-Poisson module algebras} in Section \ref{se: and co-Poisson}. A  co-Poisson module algebra is a pair $(H,A)$ of a co-Poisson Hopf algebra $H$ and a cocommutative $H$-module bialgebra $A$ together with a map $\delta:A\to H\otimes A\oplus A\otimes H$ such that $\delta$ satisfies the co-skew-symmetry, co-Leibniz rule and the co-Jacobi identity. We then construct, in Section \ref{se:co-dec-locadfin} a large family of  co-Poisson module algebras, based on Joseph and Letzter's work \cite{JL} on the $ad$-finite part of the quantized enveloping algebras $U_q(\gg)$ of a complex semisimple Lie algebra $\gg$ and results of Lyubashenko and Sudbery \cite{Lyu-Sud} on quantum Lie algebras.

 We can now reformulate and generalize Problems 1.1 and 1.2 as follows:

\begin{problem}
\label{pr:co-poisson}
\noindent(a) Classify co-Poisson module algebras $(H,A)$.\\
 \noindent(b) Classify quantizations of co-Poisson module algebras $(H_h,A_h)=(H\ltimes A)_h$.
 .
\end{problem}

As we explained above,  we completely solve the Problem  \ref{pr:co-poisson} in the case when $H$ is the universal enveloping algebra of a quasitriangular complex simple Lie bialgebra and $A$ the symmetric algebra of a finite dimensional $H$-module.  Moreover, we show in Section \ref{se:co-dec-locadfin} that the quantum symmetric algebras of the adjoint $U_h(sl_n)$-module defined by Donin in \cite{Don} can be constructed as the associated graded of a quantized co-Poisson module algebra $(H,A)$ where $H=U(sl_n)$ and $A=U(sl_n)$. We plan to address Problem \ref{pr:co-poisson} in more generality in subsequent papers.

The paper will be organized as follows: In Section \ref{se: and co-Poisson}, we introduce the notion of a  co-Poisson Hopf algebra and its quantization.  In Section \ref{se:LBAsand quantization} we recall the notions  and properties of finite-dimensional Lie bialgebras and introduce  semidirect Lie bialgebras. We then classify  in Section \ref{se:quasitriangular} semidirect Lie bialgebras $\gg\ltimes V$ where $\gg$ is a simple Lie algebra with quasitriangular Lie bialgebra structure and $V$ a simple $\gg$-module (Theorem \ref{th:main}). Our proof relies on the author's previous results in \cite{ZW}. Moreover, if $\gg$ has the standard Lie bialgebra structure we give a direct proof for the classification theorem, using results of Hodges and Yakimov in \cite{HY} and \cite{HY1} about the double of a Lie bialgebra.   The following section investigates the quantizations of the  co-Poisson module algebras arising from  finite-dimensional submodules of $U_q(\gg)$ and use them  to construct a quantum symmetric algebra for the adjoint $U_q(sl_n)$-module which coincides with Donin's construction in \cite{Don}. In the appendices we will recall some  well known facts about the quantization of Lie bialgebras, the quantized universal enveloping algebras $U_q(\gg)$ and the (semi)classical limit.

 {\bf Acknowledgments}  The author would like to thank Arkady Berenstein, Ken Goodearl and Milen Yakimov for interesting and stimulating discussions.

  \section{Co-Poisson module algebras}
\label{se: and co-Poisson}
  \subsection{Hopf algebras and their modules}
   In this section we will  recall the definitions of algebras, coalgebras, bialgebras and Hopf algebra, as well as their modules and co-modules.
    We first recall the  definitions of  monoidal and braided monoidal categories.

  \begin{definition}
  A  monoidal category is a category $\C$ with a functor $ \bigotimes: \C \times\C \to \C $ that associates an object $X\otimes Y$ to each pair $(X,Y)$ of objects, and a morphism $f\otimes g$ to each pair $(f,g)$ of morphisms, and an object $1$
 such that for $X \in Ob(\C )$ one has $ 1\otimes X\cong X\otimes 1\cong X$, and such that the pentagonal axiom  $X,Y,Z, W\in Ob(\C)$ is satisfied; i.e., one has
 $$((X\otimes Y)\otimes Z)\otimes W\cong (X\otimes (Y\otimes Z))\otimes W \cong (X\otimes Y)\otimes (Z\otimes W)\cong$$
 $$ X\otimes ((Y\otimes Z)\otimes W)\cong X\otimes (Y\otimes (Z\otimes W))\ .$$

  \end{definition}

    \begin{definition}
   \noindent(a) Denote by $\tau: (X,Y)\to (Y,X)$ the permutation of factors in $\C\times \C$. A braided  monoidal category $(\C,\R)$ is a  monoidal category $\C$ with a natural transformation $\R$ between the functors $\bigotimes:  C\times C\to C$ and $\bigotimes\circ\tau:  C\times C\to C$ satisfying  the following relations:

    \begin{equation}
 \label{eq: triangle right}
 \R_{X, Y\otimes Z}= (Id_{Y}\otimes\R_{X,Z}) \circ (\R_{X,Y}\otimes Id_{Z})\ ,
 \end{equation}
 \begin{equation}
 \label{eq: triangle left}
  \R_{X\otimes Y,Z}=(\R_{X,Z}\otimes Id_{Y}  ) \circ (Id_{X}\otimes\R_{Y,Z} )\ .\end{equation}

 When $A$ and $B$ are fixed we may sometimes abbreviate $\R_{A,B}=\R$.

 \noindent (b) If the braiding $\R$ satisfies additionally   $\R_{B,A}\circ\R_{A,B}=Id_{A\otimes B}$\ for all objects $A$ and $B$, then we  refer to $(\C,\R)$ as  a symmetric category.
  \end{definition}
\begin{example}
The permutation of factors defines a symmetric braiding on the category of vectorspaces over any field.
\end{example}

 An associative {\it unital algebra}   in a monoidal  category $\C$
is an object $A$ of $\C$ with a map
$\mu:  A\otimes A\to A$ called multiplication, and a map $\eta: 1\to A$, called unit, satisfying the following relations:

$$
 \begin{CD}
A\otimes A \otimes A@>Id\otimes \mu>>A\otimes A  \\
@ V\mu\otimes Id VV @VV\mu V\\
A \otimes A@>\mu>>  A
\end{CD}$$
 \vspace{3mm}
 $$
 \begin{CD} 1\otimes A  @>\eta \otimes Id>>A\otimes A  \\
@ V VV @VV\mu V\\
A @>Id >>  A
\end{CD}  \quad\quad
 \begin{CD}
A\otimes 1  @>Id \otimes \eta>>A\otimes A  \\
@ V VV @VV\mu V\\
A @>Id >>  A\ .
\end{CD}
$$
We will abbreviate $\mu(a\otimes b)= a\cdot b$, and denote the category of algebras in $\C$ by $Alg(\C)$.  A (left)-$A$-{\it module} is an object $V$ of $\C$  with a map $m: A\otimes V\to V$, called the action of $A$ on $V$ satisfying the following:

$$
 \begin{CD}
A\otimes A \otimes V@>Id\otimes m>>A\otimes V  \\
@ V\mu\otimes Id VV @VV m V\\
A \otimes V@>\mu>>  V\ .
\end{CD}
  $$

A  {\it co-unital coalgebra}  is an object $B$ of $\C$  with a map $\Delta: B\to B\otimes B$, called the co-multiplication and a map $\varepsilon: B\to 1$, the co-unit, satisfying the following relations:

$$
 \begin{CD}
B @>\Delta >>B\otimes B\\
@ V\Delta VV @VV Id\otimes\Delta  V\\
B \otimes B@>\Delta\otimes Id>>  B\otimes B\otimes B
\end{CD}
$$ \vspace{3mm}
$$
 \begin{CD}
B @>\Delta >>B\otimes B  \\
@ V VV @VV Id\otimes \varepsilon V\\
B\otimes 1 @>Id >> B\otimes 1
\end{CD}  \quad\quad
 \begin{CD}
B @>\Delta>>B\otimes B  \\
@ V VV @VV\varepsilon\otimes Id V\\
1\otimes B @>Id >>  1\otimes B\ .
\end{CD}
$$

A left-$A$-{\it comodule structure} on an  object $V$ of $\C $ is  a linear map $\delta:V\to A\otimes V$, called the co-action of $A$ on $V$ satisfying
$$
 \begin{CD}
V@>\delta >>A\otimes V\\
@ V\delta VV @VV Id\otimes\delta  V\\
A\otimes A@>\Delta\otimes Id>>  A\otimes A\otimes V\ .
\end{CD}
$$
 The category of left $A$-modules in $\C$ consists of the left $A$-modules as objects and structure preserving maps as morphisms. A left $A$ module algebra, resp. coalgebra  is an algebra,  resp. coalgebra in the category of $A$-modules.

The category of left $A$-comodules in $\C$ consists of the left $A$-comodules as objects and structure preserving maps as morphisms. A left-$A$-comodule algebra, resp. coalgebra is an algebra, resp.  coalgebra $V$ in the category of $A$-modules.

A {\it bialgebra} is an object $A$  of $Alg(\C)$ which has an algebra and a coalgebra structure such that the co-multiplication is a homomorphism of algebras: $\Delta: A\to A\otimes A$.  We can define the notion of $A$-module or co-module bialgebras analogous to the case of algebras and coalgebras.

A Hopf algebra over $k$ is a bialgebra $H$  together with an algebra anti-automorphism $S: H\to H$ , called the antipode, satisfying the following relation:
$$
  \begin{CD}
A@>\Delta >>A\otimes A\\
@ V\eta\circ\varepsilon  VV @V S\otimes Id VvvV\\
A  @<\mu<<  A\otimes A
\end{CD} \quad\quad
 \begin{CD}
A@>\Delta >>A\otimes A\\
@ VV\eta\circ\varepsilon  V @VV Id\otimes S V\\
A  @<\mu<<  A\otimes A\ .
\end{CD}
  $$

We have so far defined left module and comodule structures. Note that {\it right} module and comodule structures can be defined analogously. The following fact regarding the structure of the categories of algebras, coalgebras, bialgebras and Hopf algebras is well known.

\begin{lemma}
\label{le:tensor-prod-alg}
Let $\C$ be a braided monoidal category. The categories of algebras, coalgebras, bialgebras and Hopf algebras have a natural tensor structure defined by

$$\mu_{A\otimes B} =(\mu_A\otimes \mu_B)\circ \sigma_{23}\ ,$$
$$\Delta_{A\otimes B}=\sigma_{23}\circ(\Delta_A\otimes \Delta_B)\ ,$$
where $\sigma_{23}=Id\otimes \sigma\otimes Id$ denotes the braiding acting on the second and third factors.
\end{lemma}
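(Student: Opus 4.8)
The plan is to verify, for each of the four kinds of structure in turn, the defining axioms of the tensor object, always reducing them to the corresponding axioms for $A$ and for $B$ together with the braiding identities \eqref{eq: triangle right}, \eqref{eq: triangle left} and the naturality of $\sigma$. The only genuine difficulty is bookkeeping: after repeatedly using naturality of $\sigma$ to push all multiplications and comultiplications to the outside, one is left on both sides of each identity with the same map on the $A$-factors tensored with the same map on the $B$-factors, precomposed (or postcomposed) with a braid that shuffles the $A$-strands past the $B$-strands, and one must check that the two braids coincide. This last point is exactly the coherence theorem for braided monoidal categories: \eqref{eq: triangle right}, \eqref{eq: triangle left}, naturality and the pentagon make any such shuffle a well-defined morphism, independent of how it is assembled.

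Concretely, for algebras one equips $A\otimes B$ with the multiplication $\mu_{A\otimes B}=(\mu_A\otimes\mu_B)\circ\sigma_{23}$ and unit $\eta_A\otimes\eta_B$ (using $1\otimes 1\cong 1$). Expanding $\mu_{A\otimes B}\circ(\mu_{A\otimes B}\otimes Id)$ and $\mu_{A\otimes B}\circ(Id\otimes\mu_{A\otimes B})$ on $(A\otimes B)^{\otimes 3}\cong A\otimes B\otimes A\otimes B\otimes A\otimes B$ and moving the multiplications to the outside by naturality of $\sigma$, each composite becomes a triple product on the three $A$-factors tensored with a triple product on the three $B$-factors, precomposed with a braid $A\otimes B\otimes A\otimes B\otimes A\otimes B\to A^{\otimes 3}\otimes B^{\otimes 3}$. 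Associativity of $\mu_A$ and $\mu_B$ identifies the triple products and coherence identifies the two braids; the unit axioms are immediate from those for $A$, $B$ and the triangle constraints of $\C$. Reversing all arrows gives, by the same argument, that $\Delta_{A\otimes B}=\sigma_{23}\circ(\Delta_A\otimes\Delta_B)$ and $\varepsilon_A\otimes\varepsilon_B$ make $A\otimes B$ a coalgebra.

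For bialgebras the one remaining point is that $\Delta_{A\otimes B}$ is an algebra homomorphism into $(A\otimes B)\otimes(A\otimes B)$ equipped with the algebra structure just defined. Writing out $\Delta_{A\otimes B}\circ\mu_{A\otimes B}$ and $\mu_{(A\otimes B)^{\otimes 2}}\circ(\Delta_{A\otimes B}\otimes\Delta_{A\otimes B})$ as endomorphisms of $(A\otimes B)^{\otimes 2}$ and substituting the bialgebra compatibilities of $A$ and $B$, both sides reduce to $(\Delta_A\circ\mu_A)\otimes(\Delta_B\circ\mu_B)$ conjugated by a braid, and the two braids agree by coherence; that $\varepsilon_A\otimes\varepsilon_B$ is multiplicative and $\eta_A\otimes\eta_B$ comultiplicative is trivial. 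For Hopf algebras one takes $S_{A\otimes B}$ to be the natural composite of $S_A\otimes S_B$ with braiding isomorphisms interchanging the two factors — this is simply $S_A\otimes S_B$ when $\C$ is symmetric, which is the situation in the rest of the paper — an algebra anti-endomorphism because $S_A$ and $S_B$ are; the two antipode axioms then collapse, after the same expansion, to those of $A$ and $B$, the braidings introduced cancelling by naturality of $\sigma$ with respect to $S_A$, $S_B$.

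Finally, if $f\colon A\to A'$ and $g\colon B\to B'$ are morphisms of algebras (respectively coalgebras, bialgebras, Hopf algebras), then $f\otimes g$ is again such a morphism for the tensor structures, by naturality of $\sigma$, and the associativity and unit constraints of $\otimes$ in $\C$ restrict to these categories; this exhibits the asserted tensor (monoidal) structures. I expect the bialgebra-compatibility computation of the third paragraph to be the main obstacle, but once coherence is invoked to match the braids on the two sides it is purely mechanical.
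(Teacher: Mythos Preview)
The paper does not actually prove this lemma; it is simply recorded as ``well known'' with no argument given. Your outline supplies the standard proof, and for algebras, coalgebras and bialgebras it is correct: unfold the definitions, use naturality of $\sigma$ to separate the $A$- and $B$-factors, and invoke coherence for braided monoidal categories to identify the two resulting shuffle braids. This is exactly the argument one finds in Majid's work on Hopf algebras in braided categories, and it is more than the paper itself provides.

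The one place where your sketch is genuinely thin is the Hopf-algebra case in a non-symmetric braided category. Your instinct to build $S_{A\otimes B}$ from $S_A,S_B$ together with braidings is correct (the right formula is $\sigma_{B,A}\circ(S_B\otimes S_A)\circ\sigma_{A,B}$, not $S_A\otimes S_B$), but the claim that ``the braidings introduced cancel by naturality of $\sigma$'' is too quick: when you expand $\mu_{A\otimes B}\circ(S_{A\otimes B}\otimes Id)\circ\Delta_{A\otimes B}$ the braidings from $\mu$ and $\Delta$ compose to a double braiding $\sigma_{B,A}\circ\sigma_{A,B}$, which does not reduce to the identity by naturality alone. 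The honest verification uses the braided anti-(co)algebra properties of $S_A$ and $S_B$. You rightly observe that the paper only needs the symmetric case, where $S_{A\otimes B}=S_A\otimes S_B$ and your reduction goes through verbatim, so this does not affect anything downstream.
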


We have the following well known fact.

\begin{lemma}
\label{le:right-action}
Let $H$ be a Hopf algebra and $V$ a left $H$-module. Then we can define a right action of $H$ on $V$ via $v.h=S(h).v$.
\end{lemma}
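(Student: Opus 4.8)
The plan is to verify directly that the formula $v.h := S(h).v$ satisfies the two axioms of a right $H$-module, namely unitality $v.1_H = v$ and associativity $(v.h).h' = v.(hh')$ for all $h,h'\in H$ and $v\in V$. For the first, I would use that the antipode of a Hopf algebra fixes the unit, $S(1_H)=1_H$ — this follows from applying the counit/antipode axiom to $1_H$, together with $\Delta(1_H)=1_H\otimes 1_H$ and $\varepsilon(1_H)=1$ — so that $v.1_H = S(1_H).v = 1_H.v = v$ by the left-module unit axiom. For the second, the key input is precisely the hypothesis recorded in the definition of a Hopf algebra above: $S$ is an algebra anti-automorphism, hence $S(hh') = S(h')S(h)$. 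Then
$$(v.h).h' = S(h').(v.h) = S(h').(S(h).v) = \bigl(S(h')S(h)\bigr).v = S(hh').v = v.(hh')\ ,$$
where the middle equality uses associativity of the left $H$-action on $V$.

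There is no real obstacle here: the statement is a formal consequence of the defining properties of the antipode, and the only facts invoked beyond the left-module axioms are $S(1_H)=1_H$ and the anti-multiplicativity of $S$, both of which are built into the notion of Hopf algebra as presented. If one wishes to phrase the argument entirely within a braided monoidal category $\C$ rather than with elements, one replaces the computation above by the corresponding commuting-diagram chase: the right action is the composite $V\otimes H \xrightarrow{\tau} H\otimes V \xrightarrow{S\otimes Id} H\otimes V \xrightarrow{m} V$, and one checks compatibility with $\mu_H$ and $\eta_H$ using the coherence of the braiding together with $S\circ\mu_H = \mu_H\circ(S\otimes S)\circ\tau$ and $S\circ\eta_H=\eta_H$; the diagrammatic verification is routine and adds nothing essential, so I would present only the element-wise computation.
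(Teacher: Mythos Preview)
Your proof is correct; this is exactly the standard verification, using $S(1_H)=1_H$ and the anti-multiplicativity of $S$. The paper itself does not give a proof of this lemma, merely recording it as a well-known fact, so there is no alternative approach to compare against.
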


 We say that a Hopf algebra is cocommutative if it satisfies
 $$
  \begin{CD}
A@>\Delta >>A\otimes A\\
@ V\Delta  VV @V \tau VV\\
A\otimes A  @<Id_{A\otimes A}<<  A\otimes A\ ,
\end{CD}
  $$
 where $\tau$ denotes the permutation of factors $\tau(a\otimes b)=b\otimes a$.

 Now we are ready to state an important fact.
 \begin{proposition}
\noindent(a) If $H$ is a  Hopf algebra and $A$ a  $H$-left module bialgebra, then $H\otimes A$ has a natural structure of a bialgebra with multiplication $h\cdot a=h(a)$ for all $h\in h$ and $a\in A$ and comultiplication

 $$\Delta(h\cdot a)=\Delta(h)\cdot\Delta(a)= h_{(1)}\cdot a_{(1)}\otimes h_{(2)}\cdot a_{(2)}\ .$$

 for all $a\in A$ and $h\in H$, with braiding $\sigma_{23}$ given by the permutation of factors.

 \noindent(b) If $A$ is a $H$-left module Hopf algebra, then the extension of the antipodes defines an Hopf algebra structure on $H\otimes A$.
 \end{proposition}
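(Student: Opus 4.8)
The plan is to realize the bialgebra in the statement as a smash (semidirect) product $H\ltimes A$ and verify the axioms by direct computation. Concretely, I put on $H\otimes A$ the algebra structure in which $h\mapsto h\otimes 1$ and $a\mapsto 1\otimes a$ are algebra embeddings, $(h\otimes 1)(1\otimes a)=h\otimes a$, and the cross relation reads $(1\otimes a)(h\otimes 1)=\sum h_{(1)}\otimes\big(S_H(h_{(2)})\cdot a\big)$, equivalently $(h\otimes a)(h'\otimes a')=\sum hh'_{(1)}\otimes\big(S_H(h'_{(2)})\cdot a\big)a'$. For $H=U(\gg)$, $A=S(V)$ this yields the relation $ha-ah=h\cdot a$ for $h\in\gg$, $a\in V$, so it recovers $U(\gg\ltimes V)=U(\gg)\otimes S(V)$ and matches the notation $h\cdot a=h(a)$ of the statement. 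The first step is that this multiplication is associative and unital with unit $1\otimes 1$; this is the standard smash-product verification, whose only input is that $\mu_A$ and $\eta_A$ are $H$-equivariant (i.e.\ $A$ is an algebra object in $H$-modules). The second step is to give $H\otimes A$ the tensor-product coalgebra structure $\Delta(h\otimes a)=\sum(h_{(1)}\otimes a_{(1)})\otimes(h_{(2)}\otimes a_{(2)})$, $\varepsilon(h\otimes a)=\varepsilon_H(h)\varepsilon_A(a)$; coassociativity and the counit axioms are immediate from those of $H$ and $A$ and use nothing about the action.

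The core of (a) is that $\varepsilon$ and $\Delta$ are algebra homomorphisms, the target of $\Delta$ being $(H\otimes A)^{\otimes 2}$ with the multiplication $(\mu\otimes\mu)\circ\sigma_{23}$ of Lemma~\ref{le:tensor-prod-alg}, $\sigma=\tau$ the flip of vector spaces. Because every element of $H\otimes A$ is in the normal form $\sum h\otimes a$ and multiplication preserves this form, it is enough to check multiplicativity of $\Delta$ and $\varepsilon$ on the four types of products $(h\otimes 1)(h'\otimes 1)$, $(1\otimes a)(1\otimes a')$, $(h\otimes 1)(1\otimes a)$, and $(1\otimes a)(h\otimes 1)$; it then propagates to all products by normal-ordering. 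The first is the bialgebra axiom of $H$, the second that of $A$ as an ordinary bialgebra, the third an immediate unwinding of the definitions. The substantive case is the cross relation, where one must check $\Delta\big((1\otimes a)(h\otimes 1)\big)=\Delta(1\otimes a)\cdot\Delta(h\otimes 1)$ and likewise for $\varepsilon$: expanding the left side of the $\Delta$-identity uses precisely $\Delta_A\big(S_H(h_{(2)})\cdot a\big)=\sum\big(S_H(h_{(2)})\big)_{(1)}\cdot a_{(1)}\otimes\big(S_H(h_{(2)})\big)_{(2)}\cdot a_{(2)}$, i.e.\ the hypothesis that $\Delta_A$ is $H$-equivariant (the ``module bialgebra'' condition), after which a bookkeeping of Sweedler indices using coassociativity of $H$ and the anti-comultiplicativity $\Delta_H\circ S_H=(S_H\otimes S_H)\circ\tau\circ\Delta_H$ shows the two sides agree; the $\varepsilon$-identity is shorter and reduces to $H$-equivariance of $\varepsilon_A$ together with $\sum\varepsilon_H(h_{(1)})\varepsilon_H\big(S_H(h_{(2)})\big)=\varepsilon_H(h)$. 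This establishes (a).

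For (b) I assume additionally that $A$ is a Hopf algebra object in $H$-modules, so that $S_A$ is an algebra anti-homomorphism and $H$-equivariant. Forced by anti-multiplicativity together with $S_K|_{H\otimes 1}=S_H\otimes\mathrm{id}$ and $S_K|_{1\otimes A}=\mathrm{id}\otimes S_A$, I set $S_K(h\otimes a)=(1\otimes S_A(a))(S_H(h)\otimes 1)=\sum S_H(h_{(2)})\otimes\big(S_H\big(S_H(h_{(1)})\big)\cdot S_A(a)\big)$. It remains to verify the two antipode axioms $\sum S_K\big((h\otimes a)_{(1)}\big)(h\otimes a)_{(2)}=\varepsilon_K(h\otimes a)\,1_{H\otimes A}=\sum(h\otimes a)_{(1)}\,S_K\big((h\otimes a)_{(2)}\big)$; with the bialgebra structure of (a) available, this is a direct computation that uses only the antipode axioms of $H$ and of $A$ together with the $H$-equivariance of $S_A$, $\Delta_A$, and $\mu_A$.

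The step I expect to be the real obstacle is the cross-relation case in the multiplicativity of $\Delta$ in part (a): it is the only place where the module-bialgebra structure of $A$ and the Hopf structure of $H$ interact nontrivially, and the Sweedler bookkeeping there --- each application of $\Delta$ splitting both an $H$-factor and an $H$-action on $A$, with $S_H$ appearing inside the product --- is exactly where care is needed; every other part of the argument is routine bialgebra and Hopf-algebra manipulation.
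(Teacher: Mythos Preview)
Your approach is essentially the paper's: both identify $H\otimes A$ with the smash product via the cross relation $a\cdot h=h_{(1)}\cdot\big(S(h_{(2)}).a\big)$ (this is exactly the formula the paper extracts from its Lemma on the right action via the antipode), and both leave the bialgebra and Hopf verifications as routine. You simply flesh out what the paper compresses into ``Part~(a) follows'' and ``Part~(b) is easily verified.''

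One point to watch in the step you correctly single out as the crux. When you expand both sides of $\Delta\big((1\otimes a)(h\otimes 1)\big)=\Delta(1\otimes a)\cdot\Delta(h\otimes 1)$ using the module--bialgebra axiom and $\Delta_H S_H=(S_H\otimes S_H)\tau\Delta_H$, the left side produces the pattern $(h_{(1)},S(h_{(4)}),h_{(2)},S(h_{(3)}))$ while the right side gives $(h_{(1)},S(h_{(2)}),h_{(3)},S(h_{(4)}))$; these agree only because $H$ is cocommutative. That hypothesis is implicit throughout the paper (the intended $H$ is an enveloping algebra, and the very next paragraph restricts to cocommutative $H$), but it is not stated in the proposition and you do not invoke it either. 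Make the use of cocommutativity explicit at that point and your argument is complete.
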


  \begin{proof}
  We have to show that the bialgebra structure is well defined. Indeed, we have $a\cdot h= h_{(1)} \cdot S(h_{(2)}(a)$, using the right action of $H$ on $A$ given in Lemma  \ref{le:right-action}.  Part (a) is follows.

 Part (b) is easily verified. The proposition is proved.
  \end{proof}

  \subsection{Co-decorated and Co-Poisson-module algebras}

 In this section we will introduce the notions of co-decorated and co-Poisson module algebras. In order to state our results in the most efficient way we need the following notation: Let $A$ and $B$ be two vectorspaces over the field $k$. Then  we denote by $A\wedge B\subset A\otimes B\oplus B\otimes A$, the subspace spanned by skewsymmetric elements. We now make the following definitions.

 \begin{definition}
 \label{def:co-decoratedstuff}
 \noindent(a) A co-decorated bialgebra is a pair $(B,\delta)$ of a cocommutative bialgebra $B$ and a map $\delta:B\to B\wedge B$  satisfying  the co-Leibniz rule

 \begin{equation}
 \label{co:Leibniz1}
 (1\otimes \Delta)\circ \delta=(\delta\otimes 1)\circ \Delta+\sigma_{23}\circ(\delta\otimes 1)\circ \Delta\ ,
 \end{equation}
 and the compatibility condition
 \begin{equation}
 \label{eq:co-Leibniz}
 \delta(a\cdot b)=\delta(a)\Delta(b)+\Delta(a)\delta(b)\ .
 \end{equation}

 \noindent(b) A co-decorated bialgebra $(B,\delta)$ satisfying the co-Jacobi identity
 \begin{equation}
 \label{eq:co-Jacobi}
 (Cyc)\circ (1\otimes \delta)\circ \delta =0\ ,
  \end{equation}

 where $(Cyc)$ denotes the sum over the  cyclic permutations,  is  called a co-Poisson algebra.
 \end{definition}

  Now let $(H,\delta_H)$ be a cocommutative co-decorated bialgebra. The category of co-decorated $H$-module bialgebras consists of  pairs $(A,\delta)$ where $A$ is a unital  cocommutative $H$-module bialgebra and $\delta: H\otimes A\to (H\otimes A)^{\otimes 2}$ is a co-decoration, satisfying  $\delta(1\otimes a)\in ((H\otimes 1)\wedge (1\otimes A))$  and $\delta(h\otimes1)=\sigma_{23}\circ(\delta_H(h)\otimes (1_A)^{\otimes 2})$.
  The morphisms in the category are structure-preserving maps.

\begin{definition}
\label{def:co-Poisson-codec}
A  co-Poisson module algebra is a co-decorated $H$-module bialgebra  $(H,A)$ such that the co-decoration $\delta$ on $H\otimes A$ satisfies the co-Jacobi-identity \eqref{eq:co-Jacobi}.
\end{definition}

Note that this implies that $H$ is a co-Poisson bialgebra, and that it immediately invites the following question.

 \begin{problem}
 \label{pr:co-Poisson}
 Classify all co-Poisson and co-decorated structures associated to certain classes of bialgebras and module algebras, such as the enveloping algebras of complex semisimple Lie algebras with quasitriangular Lie-bialgebra structure.
 \end{problem}

 \subsection{Quantization of co-Poisson co-decorated Bialgebras}

In this section  we will introduce the quantization problem for the category of co-Poisson co-decorated Hopf algebras. We recall the notions of a quantization of a bialgebra in the Appendix Section \ref{se:quant.of LBA}.  We need the following definition.

 \begin{definition}
 A quantization of a co-Poisson module algebra $(H,A)$ is a pair $(H_h,A_h)$, where $H_h$ and $A_h$ are a Hopf, resp. bialgebra over $k[[h]]$ together with a bialgebra structure on $H_h\otimes A_h$ such that   $H_h\otimes A_h$ is a quantization of the co-Poisson module algebra structure on $(H,A)$.
 \end{definition}

We will discuss several classes of quantizations of co-decorated co-Poisson Hopf algebras in the following sections, which leads us to the following question.

\begin{problem}
Classify those co-Poisson Hopf algebras which can be obtained from co-Poisson co-decorated structures and which admit a  quantization. Are the quantizations unique?
\end{problem}

 \section{Lie bialgebras}
\label{se:LBAsand quantization}
Recall the definition of a Lie bialgebra over a field $k$ of characteristic zero.
 \begin{definition}
 A Lie bialgebra is a triple $(\gg, [\cdot,\cdot], \delta)$ of a vector space $\gg$ with Lie bracket $[\cdot,\cdot]$ and a map $\delta:\gg\to \gg\wedge\gg$ such that
 \begin{enumerate}
 \item $\delta$ defines a Lie  bracket on $\gg^*$.
 \item $\delta$ and $[\cdot,\cdot]$ are compatible via
 \begin{equation}
 \label{eq:cocycle}
 \delta([a,b])=[\delta(a), b\otimes 1+1\otimes b]+[a\otimes 1+1\otimes a,\delta(b)]\ .
 \end{equation}
 \end{enumerate}
 \end{definition}
 Note that if $(\gg, [\cdot,\cdot], \delta)$  is a Lie bialgebra, then so is $(\gg^*, \delta^*, , [\cdot,\cdot]^*)$, with Lie bracket $\delta^*$ and cobracket $[\cdot,\cdot]^*$.

Lie bialgebras are interesting objects in relation to our discussion of co-Poisson structures because of the following well-known result.

\begin{proposition}
\label{pr: bialg-co-Poisson}
Let $(\gg,[\cdot,\cdot],\delta)$ be a Lie bialgebra. The universal enveloping algebra $U(\gg)$ admits a co-Poisson structure defined by $\delta$ on $\gg\in U(\gg)$ and extended to $U(\gg)$ by \eqref{eq:co-Leibniz}.\end{proposition}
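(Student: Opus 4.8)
The plan is to verify that the three defining properties of a co-Poisson Hopf algebra — co-skew-symmetry, the co-Leibniz rule, and the co-Jacobi identity — hold for the map $\delta$ extended from $\gg$ to $U(\gg)$ via the compatibility condition \eqref{eq:co-Leibniz}. First I would make the extension precise: since $U(\gg)$ is generated as an algebra by $\gg$ together with $1$, and since \eqref{eq:co-Leibniz} prescribes $\delta(a\cdot b)$ in terms of $\delta(a),\delta(b),\Delta(a),\Delta(b)$, the map $\delta$ is determined on all of $U(\gg)$ once it is given on generators; one must check that this prescription is consistent with the defining relations $a\cdot b-b\cdot a=[a,b]$ of $U(\gg)$, i.e. that $\delta(a\cdot b)-\delta(b\cdot a)=\delta([a,b])$. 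This is exactly where the cocycle condition \eqref{eq:cocycle} enters: expanding $\delta(a\cdot b)-\delta(b\cdot a)$ using \eqref{eq:co-Leibniz} and using that $\Delta(a)=a\otimes 1+1\otimes a$ on primitive elements, one recovers precisely the right-hand side of \eqref{eq:cocycle}. So well-definedness of the extension is equivalent to $\gg$ being a Lie bialgebra.

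Next I would check co-skew-symmetry, i.e. that $\delta(x)\in U(\gg)\wedge U(\gg)$ for all $x$, equivalently $\sigma\circ\delta=-\delta$ where $\sigma$ is the flip. On generators this is the hypothesis $\delta:\gg\to\gg\wedge\gg$. For a product, apply $\sigma$ to \eqref{eq:co-Leibniz}: using that $\Delta$ is cocommutative on $U(\gg)$ (it is, since $U(\gg)$ is cocommutative) so $\sigma\circ\Delta=\Delta$, and that $\sigma(XY)=\sigma(X)\sigma(Y)$ for the componentwise product in $U(\gg)\otimes U(\gg)$, one gets $\sigma(\delta(a)\Delta(b)+\Delta(a)\delta(b))=\sigma(\delta(a))\Delta(b)+\Delta(a)\sigma(\delta(b))=-\delta(a)\Delta(b)-\Delta(a)\delta(b)$, so co-skew-symmetry propagates from generators to products by induction on word length. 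The co-Leibniz rule \eqref{co:Leibniz1}, namely $(1\otimes\Delta)\circ\delta=(\delta\otimes 1)\circ\Delta+\sigma_{23}\circ(\delta\otimes 1)\circ\Delta$, is verified the same way: it holds on primitive elements by a direct computation from $\delta(a)\in\gg\wedge\gg$ and $\Delta(a)=a\otimes1+1\otimes a$, and then one checks it is preserved under multiplication, using that $\Delta$ is an algebra homomorphism and \eqref{eq:co-Leibniz}; this is a bookkeeping argument matching up the terms.

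The main obstacle — and the only step that is more than bookkeeping — is the co-Jacobi identity \eqref{eq:co-Jacobi}: $(\mathrm{Cyc})\circ(1\otimes\delta)\circ\delta=0$. On $\gg$ itself this is equivalent to the Jacobi identity for the bracket $\delta^*$ on $\gg^*$, which is part of the Lie bialgebra axioms (condition (1) in the definition). The work is to propagate it to all of $U(\gg)$: one shows that the set of elements on which co-Jacobi holds is closed under multiplication, given that it holds on a generating set, using \eqref{eq:co-Leibniz}, the already-established co-Leibniz rule, and cocommutativity of $\Delta$. The cleanest route is to observe that $(1\otimes\delta)\circ\delta$ is a "co-derivation-like" operator whose behavior on a product $a\cdot b$ is controlled by \eqref{eq:co-Leibniz} together with the co-Leibniz rule for $\delta$, so that the cyclic symmetrization of the product term decomposes into cyclic symmetrizations applied to $a$ and to $b$ separately (which vanish by induction) plus cross-terms that cancel in pairs after using cocommutativity. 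I would note that this is the standard duality between co-Poisson Hopf algebra structures on $U(\gg)$ and Lie bialgebra structures on $\gg$ — originally due to Drinfeld — and cite it accordingly, carrying out only the computations that are not entirely routine.

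\endproof
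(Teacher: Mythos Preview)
Your proof sketch is correct and follows the standard Drinfeld argument. The paper, however, does not prove this proposition at all: it is introduced as a ``well-known result'' and stated without proof, so there is nothing to compare against. Your write-up already supplies more detail than the paper does; if anything, you could shorten it to a single sentence citing Drinfeld and Chari--Pressley, matching the paper's treatment.
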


The notions of a classical $r$-matrix and a quasitriangular Lie algebra, which we will introduce next, is very important for the theory of quantizations.
\begin{definition}
A classical $r$-matrix is an element  $r\in\gg\otimes \gg$ such that $r+r^{op}$ is $\gg$-invariant and $r\in\gg\otimes \gg\subset U(\gg)\otimes U(\gg)$ satisfies the Classical Yang-Baxter Equation (CYBE)
\begin{equation}
\label{eq:CYBE}
[r_{12}, r_{13}]+[r_{12},r_{23}]+[r_{13},r_{23}]=0\in U(\gg)^{\otimes 3}\ ,
\end{equation}
where $r_{12}=r\otimes 1$, $r_{23}=1\otimes r$ and $r_{13}=r_{(1)}\otimes 1\otimes r$.
\end{definition}

\begin{definition}
\noindent (a) A Lie bialgebra $(\gg,[\cdot,\cdot],\delta)$ is called quasitriangular if there exists a classical $r$-matrix $r\in \gg\otimes \gg$ such that for all $g\in\gg$
$$\delta(g)=[r,1\otimes g+g\otimes 1]\ .$$

\noindent (b) $(\gg,[\cdot,\cdot],\delta)$ is called triangular if there exists a skew-symmetric classical $r$-matrix $r\in \Lambda^2 \gg$ such that for all $g\in\gg$
$$\delta(g)=[r,1\otimes g+g\otimes 1]\ .$$

\noindent (c) A Lie bialgebra $(\gg,[\cdot\cdot],\delta)$ is called coboundary, if there exists $r\in \Lambda^2 \gg$ such that for all $g\in\gg$
$$\delta(g)=[r,1\otimes g+g\otimes 1]\ .$$
\end{definition}

Conversely, every classical $r$-matrix defines a Lie bialgebra.

\begin{proposition}(see e.g. \cite{Ch-P})
Let $\gg$ be a Lie algebra and let $r\in\gg\otimes \gg$ be a solution of the CYBE such that $r+r^{op}$ is $\gg$-invariant. Then $\delta:\gg\to \gg\otimes \gg$ given by
$$\delta(x)=[r,x\otimes 1+1\otimes x]$$
defines a Lie bialgebra structure on $\gg$.
\end{proposition}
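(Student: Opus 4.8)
The plan is to verify directly that the map $\delta(x)=[r,x\otimes 1+1\otimes x]$ satisfies the two defining conditions of a Lie bialgebra: that $\delta$ is skew-symmetric with values in $\gg\wedge\gg$, that its dual $\delta^*$ is a Lie bracket on $\gg^*$ (equivalently, the co-Jacobi identity holds), and that $\delta$ is a $1$-cocycle in the sense of \eqref{eq:cocycle}. I would organize the argument around these three points.

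First I would check that $\delta$ takes values in $\Lambda^2\gg\subset\gg\wedge\gg$. Writing $\delta(x)=[r,x\otimes1]+[r,1\otimes x]=(\mathrm{ad}_x\otimes1+1\otimes\mathrm{ad}_x)(-r)$ up to sign conventions, the symmetric part of $\delta(x)$ is $(\mathrm{ad}_x\otimes1+1\otimes\mathrm{ad}_x)$ applied to the symmetric part of $r$, namely $\tfrac12(r+r^{op})$; since $r+r^{op}$ is $\gg$-invariant, this vanishes, so $\delta(x)\in\Lambda^2\gg$. Second, the cocycle identity \eqref{eq:cocycle} is essentially the Jacobi identity for $\mathrm{ad}$: since $\delta(x)=(\mathrm{ad}_x\otimes1+1\otimes\mathrm{ad}_x)(r)$ and $\mathrm{ad}_{[a,b]}=\mathrm{ad}_a\mathrm{ad}_b-\mathrm{ad}_b\mathrm{ad}_a$ acting on each tensor leg, one expands $\delta([a,b])$ and reassembles the terms into $[\,a\otimes1+1\otimes a,\delta(b)\,]+[\,\delta(a),b\otimes1+1\otimes b\,]$; this is a short bracket-manipulation using that the two coproduct-type operators $\mathrm{ad}_a\otimes1+1\otimes\mathrm{ad}_a$ for varying $a$ have commutator governed by $\mathrm{ad}_{[a,b]}$, so no use of the CYBE is needed here.

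The substantive step, and the one I expect to be the main obstacle, is the co-Jacobi identity, i.e. that $\delta^*$ satisfies Jacobi on $\gg^*$; this is exactly where the CYBE enters. The standard computation expresses $(\mathrm{Cyc})\circ(1\otimes\delta)\circ\delta(x)$ in $\gg^{\otimes3}$ in terms of the Schouten-type bracket $[[r,r]]:=[r_{12},r_{13}]+[r_{12},r_{23}]+[r_{13},r_{23}]$: one shows that $(\mathrm{Cyc})\circ(1\otimes\delta)\circ\delta(x)=\big(\mathrm{ad}_x\otimes1\otimes1+1\otimes\mathrm{ad}_x\otimes1+1\otimes1\otimes\mathrm{ad}_x\big)\,[[r,r]]$ up to a nonzero scalar. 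This requires carefully tracking the three cyclic terms, using the skew-symmetry established above to cancel the symmetric contributions, and is the one place where the bookkeeping is genuinely delicate. Given \eqref{eq:CYBE}, $[[r,r]]=0$, so the right-hand side vanishes and co-Jacobi holds.

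Finally I would assemble: the three verified properties are precisely conditions (1)–(2) in the definition of a Lie bialgebra, so $(\gg,[\cdot,\cdot],\delta)$ is a Lie bialgebra, completing the proof. I would remark that, as the excerpt notes, this is well known (e.g. \cite{Ch-P}), so it is reasonable to present the cocycle check and the skew-symmetry check briefly and to spend what detail there is on the co-Jacobi/CYBE correspondence, or simply to cite the standard reference for that computation.
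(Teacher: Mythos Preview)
Your outline is correct and is precisely the standard argument one finds in the cited reference: verify skew-symmetry via the $\gg$-invariance of $r+r^{op}$, check the $1$-cocycle condition directly from the derivation property of $\mathrm{ad}$, and reduce co-Jacobi to the vanishing of $[[r,r]]$ via the CYBE. The paper itself gives no proof of this proposition---it is stated with the citation ``see e.g.\ \cite{Ch-P}'' and used as a known fact---so there is nothing to compare against beyond noting that your approach is exactly the one the reference provides.
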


\subsection{The Double of a Lie bialgebra}

Let $\aa$ be a Lie algebra and let $\aa^*$ be a Lie algebra structure on the dual Lie of $\aa$. Define a new algebra
$D(\aa)$, the Drinfeld double,  such that $D(\aa)=\aa\oplus \aa^*$ as vectorspaces and endowed with a skewsymmetric bracket
$[\cdot,\cdot]:D(\aa)\otimes D(\aa)\to D(\aa)$ such that its restrictions to  $\aa$ and $\aa^*$ are given by the Lie brackets on $\aa$ and $\aa^*$, respectively, and such that

$$[x,\xi]=ad^*_\aa(x)(\xi)-ad^*_{\aa^*}(\xi)(x)\ , x\in\aa, \xi\in\aa^*\ .$$
Here $ad^*_\aa$ and $ad^*_{\aa^*}$ denotes the coadjoint action of $\aa$ on $\aa^*$ and $\aa^*$ on $\aa$, respectively.
Analogously, we define the Lie algebra structure on $D(\gg^*)$.
 The following fact is well known.

 \begin{proposition}(see e.g. \cite{Ch-P} or \cite[ch.4.1]{ES})
  The algebra $D(\aa)$ is a Lie algebra, i.e. the bracket satisfies the Jacobi identity if and only if $\aa^*$ defines a Lie bialgebra structure on $\aa$.
 \end{proposition}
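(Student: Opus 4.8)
The plan is to prove that $D(\aa)$ is a Lie algebra precisely when $\aa^*$ carries a Lie bialgebra structure compatible with $\aa$, by a direct (if tedious) verification of the Jacobi identity, organized according to how many arguments come from $\aa$ versus $\aa^*$. Since the bracket is skewsymmetric by construction, the only thing to check is the Jacobi identity $[x,[y,z]]+[y,[z,x]]+[z,[x,y]]=0$ for all triples of elements of $D(\aa)=\aa\oplus\aa^*$. By multilinearity and skewsymmetry it suffices to consider the four cases: (1) all three arguments in $\aa$; (2) two in $\aa$, one in $\aa^*$; (3) one in $\aa$, two in $\aa^*$; (4) all three in $\aa^*$.

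Cases (1) and (4) are immediate: they say exactly that $[\cdot,\cdot]|_\aa$ and $[\cdot,\cdot]|_{\aa^*}$ (that is, $\delta^*$ read off the cobracket of $\aa$) satisfy Jacobi, which holds because $\aa$ is a Lie algebra and $\aa^*$ is by hypothesis a Lie algebra. The content is in cases (2) and (3), which are interchanged under the symmetry swapping $\aa\leftrightarrow\aa^*$ (using that $D(\aa)$ and $D(\aa^*)$ are the same object up to relabeling), so it is enough to treat case (2) carefully. So take $x,y\in\aa$ and $\xi\in\aa^*$, expand each inner bracket using the defining formula $[x,\xi]=ad^*_\aa(x)(\xi)-ad^*_{\aa^*}(\xi)(x)$, and collect the resulting terms in $\aa$ and in $\aa^*$ separately. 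The $\aa$-component of the cyclic sum, after using the definition of the coadjoint action and the identity $\langle\delta(\cdot),\cdot\wedge\cdot\rangle$ pairing, should reduce to the cocycle condition \eqref{eq:cocycle} evaluated suitably; the $\aa^*$-component should reduce to the same cocycle condition read the other way. In other words, after the dust settles, the vanishing of the cyclic sum in case (2) is logically equivalent to the compatibility identity \eqref{eq:cocycle} between the bracket of $\aa$ and the cobracket $\delta$ (equivalently the bracket of $\aa^*$), which is exactly the statement that $(\aa,[\cdot,\cdot],\delta)$ — equivalently the pair $(\aa,\aa^*)$ — is a Lie bialgebra.

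The main obstacle, and the only real work, is bookkeeping: one must write $ad^*_\aa$ and $ad^*_{\aa^*}$ in terms of the structure constants (or, invariantly, in terms of the pairings $\langle ad^*_\aa(x)\xi, y\rangle = -\langle \xi,[x,y]\rangle$ and its dual), substitute into the nine terms coming from the three doubly-nested brackets, and verify that the non-cocycle terms cancel in pairs while the remaining terms assemble into \eqref{eq:cocycle}. It is convenient to test the $\aa^*$-valued part of the cyclic sum by pairing with an arbitrary element of $\aa$ and the $\aa$-valued part by pairing with an arbitrary element of $\aa^*$, turning everything into scalar identities among the two brackets and the pairing; then associativity of the pairing manipulations makes the equivalence with the cocycle condition transparent. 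For the converse direction one simply reads the same computation backwards: if Jacobi holds in $D(\aa)$, restricting to case (1)/(4) gives that $\aa$ and $\aa^*$ are Lie algebras, and case (2) gives the cocycle condition, so $\aa^*$ is a Lie bialgebra structure on $\aa$. I would present cases (1)(4) in one line, do case (2) in moderate detail, and invoke the $\aa\leftrightarrow\aa^*$ duality for case (3), citing \cite{Ch-P} or \cite[ch.4.1]{ES} for readers who want the full computation.
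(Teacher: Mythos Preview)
Your proposal is correct and follows the standard argument; note that the paper itself gives no proof of this proposition but simply refers the reader to \cite{Ch-P} and \cite[ch.~4.1]{ES}, where exactly the case-by-case verification you outline appears. Your organization (cases (1) and (4) trivial, case (2) equivalent to the cocycle condition \eqref{eq:cocycle}, case (3) by the $\aa\leftrightarrow\aa^*$ symmetry) is precisely the approach in those references, so there is nothing to contrast.
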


We then have the following fact originally due to Drinfeld.

\begin{proposition}\cite[Theorem 4.1]{ES}
Let $(\aa,[\cdot,\cdot],\delta)$ be a finite dimensional Lie bialgebra. Then the double $D(\aa)$ is a quasitriangular Lie bialgebra.
\end{proposition}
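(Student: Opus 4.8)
The plan is to produce the standard $r$-matrix on $D(\aa)$ explicitly and verify it satisfies the three required conditions. First I would fix a basis $\{e_i\}$ of $\aa$ and let $\{e^i\}$ be the dual basis of $\aa^*$, viewed inside $D(\aa) = \aa \oplus \aa^*$; then set $r = \sum_i e_i \otimes e^i \in D(\aa) \otimes D(\aa)$. This is the canonical element of $\aa \otimes \aa^* \subset D(\aa)^{\otimes 2}$ and is independent of the choice of basis. The claim to be checked is threefold: (1) $r + r^{op}$ is $D(\aa)$-invariant; (2) $r$ solves the CYBE \eqref{eq:CYBE}; (3) the cobracket $\delta_{D(\aa)}(x) = [r, x\otimes 1 + 1\otimes x]$ restricts to $\delta$ on $\aa$ and to $[\cdot,\cdot]^*$ on $\aa^*$, so that $(D(\aa),[\cdot,\cdot],\delta_{D(\aa)})$ is genuinely a Lie bialgebra extending the given data. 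Granting these, the definition of quasitriangular Lie bialgebra is satisfied and we are done.

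For step (1), I would observe that $r + r^{op} = \sum_i (e_i \otimes e^i + e^i \otimes e_i)$ is precisely the symmetric invariant bilinear form on $D(\aa)$ coming from the natural pairing between $\aa$ and $\aa^*$ (the Killing-type form that pairs $\aa$ with $\aa^*$ and is zero on each factor separately). Its invariance under $\mathrm{ad}$ of $D(\aa)$ is exactly the compatibility built into the double's bracket $[x,\xi] = \mathrm{ad}^*_\aa(x)(\xi) - \mathrm{ad}^*_{\aa^*}(\xi)(x)$: the bracket was designed so that this pairing is ad-invariant, and this is the content of the preceding proposition identifying $D(\aa)$ as a Lie algebra iff $\aa^*$ is a Lie bialgebra structure. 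For step (3), I would compute $[r, x \otimes 1 + 1 \otimes x]$ for $x \in \aa$: the terms $[e_i, x] \otimes e^i$ land in $\aa \otimes \aa^*$ and reproduce, after a short bookkeeping with structure constants, the cobracket dual to the bracket on $\aa^*$, while the terms $e_i \otimes [e^i, x]$ contribute via the mixed bracket and the coadjoint actions; assembling both gives $\delta(x)$. The computation for $\xi \in \aa^*$ is symmetric, swapping the roles of $\aa$ and $\aa^*$, and yields $[\cdot,\cdot]^*$.

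The main obstacle is step (2), verifying the CYBE. The cleanest route is to invoke that the CYBE for $r = \sum_i e_i \otimes e^i$ on the double is equivalent, after expanding $[r_{12},r_{13}] + [r_{12},r_{23}] + [r_{13},r_{23}]$ in the basis, to a combination of: the Jacobi identity on $\aa$, the Jacobi identity on $\aa^*$, and the cocycle condition \eqref{eq:cocycle} relating $[\cdot,\cdot]$ and $\delta$ — all of which hold because $(\aa,[\cdot,\cdot],\delta)$ is a Lie bialgebra. Concretely, writing $[e_i,e_j] = c_{ij}^k e_k$ and $\delta(e_k) = f_k^{ij} e_i \wedge e_j$ (so $[e^i,e^j] = f_k^{ij} e^k$), one expands the three Schouten-type terms using the mixed bracket relations; the terms purely in $\aa^{\otimes 3}$ cancel by Jacobi on $\aa^*$, those in $(\aa^*)^{\otimes 3}$ by Jacobi on $\aa$, and the mixed-degree terms cancel precisely by the $1$-cocycle identity \eqref{eq:cocycle}. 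Since this is the one genuinely computational point, in the write-up I would either carry it out with explicit structure constants or cite \cite{Ch-P} or \cite[ch.4.1]{ES}, where this identity is established; either way, the conclusion is that $r$ is a classical $r$-matrix and hence $D(\aa)$ is quasitriangular. $\hfill\square$
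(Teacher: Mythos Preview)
The paper does not supply its own proof of this proposition; it is stated with a bare citation to \cite[Theorem 4.1]{ES}. Your outline is precisely the standard argument found in that reference and in \cite{Ch-P}: take $r=\sum_i e_i\otimes e^i$ to be the canonical element of $\aa\otimes\aa^*\subset D(\aa)^{\otimes 2}$, observe that $r+r^{op}$ is the invariant pairing on the double, verify CYBE, and check that $x\mapsto[r,x\otimes 1+1\otimes x]$ reproduces the given cobracket on $\aa$ and its dual on $\aa^*$. So there is nothing to compare against, and your approach is correct.

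One small bookkeeping slip in your step (2): with $r=\sum_i e_i\otimes e^i$, the expansion of $[r_{12},r_{13}]+[r_{12},r_{23}]+[r_{13},r_{23}]$ produces no terms in $\aa^{\otimes 3}$ or $(\aa^*)^{\otimes 3}$ at all. One has $[r_{12},r_{13}]\in\aa\otimes\aa^*\otimes\aa^*$, $[r_{13},r_{23}]\in\aa\otimes\aa\otimes\aa^*$, and $[r_{12},r_{23}]$ splits between these two graded pieces via the two components of the mixed bracket $[e^i,e_k]\in\aa\oplus\aa^*$. The vanishing of each graded piece then unpacks into the Lie bialgebra axioms, as you say, but the attribution of which axiom kills which piece is not quite as you wrote it. This does not affect the validity of the argument, and since you already propose to either carry out the structure-constant computation in full or cite \cite{ES}, the final write-up would be fine.
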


\subsection{Manin triples}
In this section we will introduce the notion of a finite-dimensional Manin triple which is closely related to the double of a Lie bialgebra, again following \cite[ch. 4]{ES} closely.

\begin{definition}
A triple of finite dimensional Lie algebras $(\gg,\gg_{+},\gg_-)$ such that
\begin{enumerate}
\item $\gg_+$ and $\gg_-$ are Lie subalgebras of $\gg$ and such that $\gg=\gg_+\oplus \gg_-$ as a vector space, and
\item $\gg_+$ and $\gg_-$ are isotropic subalgebras with respect to a nondegenerate invariant bilinear form  $\langle \cdot,\cdot\rangle:\gg\otimes\gg\to \gg$
\end{enumerate}
is called a Manin triple.
\end{definition}

 The form    $\langle \cdot,\cdot\rangle$ induces a nondegenerate pairing $\gg_+\otimes \gg_-\to \CC$ and hence a Lie algebra isomorphism $\gg_-\cong \gg_+^*$. Therefore $\gg_-$ defines a Lie coalgebra structure $\delta:\gg_+\to\gg_+\wedge\gg_+$.  Denote by $[\cdot,\cdot]$ the Lie bracket on $\gg_+$. The following fact is well known.

 \begin{proposition}
 \label{pr:q-tr-manin triples}
\noindent(a)  Let $(\gg,\gg_+,\gg_-)$ be a finite-dimensional Manin triple. Then, $(\gg_+,[\cdot,\cdot],\delta)$ is a Lie bialgebra. Moreover, $\gg$ is isomorphic as a Lie algebra to $D(\gg_+)$.

\noindent(b) Let $(\gg,[\cdot,\cdot],\delta)$ be a Lie bialgebra. Then $D(\gg)$ is a quasitriangular Lie bialgebra whose $r$-matrix is the canonical element corresponding to the trace on $\gg\otimes \gg^*$.
 \end{proposition}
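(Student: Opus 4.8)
The plan is to prove Proposition~\ref{pr:q-tr-manin triples} in two parts, leaning on the propositions on the double already established in the excerpt (in particular, that $D(\aa)$ is a Lie algebra iff $\aa^*$ defines a Lie bialgebra structure on $\aa$, and that the double of a finite-dimensional Lie bialgebra is quasitriangular).

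For part (a), I would start from a finite-dimensional Manin triple $(\gg,\gg_+,\gg_-)$. The nondegenerate invariant form $\langle\cdot,\cdot\rangle$ restricted to $\gg_+\times\gg_-$ is a nondegenerate pairing (this is exactly where isotropy of $\gg_+$ and $\gg_-$ together with nondegeneracy on $\gg$ is used: the radical of the pairing on $\gg_+\times\gg_-$ would lie in the radical of the full form). This identifies $\gg_-\cong\gg_+^*$ as vector spaces, and transporting the Lie bracket of $\gg_-$ gives a cobracket $\delta:\gg_+\to\gg_+\wedge\gg_+$. The skew-symmetry of $\delta$ follows because the Lie bracket on $\gg_-$ is skew. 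The key computation is the cocycle condition \eqref{eq:cocycle}: one writes both sides as pairings against arbitrary $\xi,\eta\in\gg_-$, uses invariance of $\langle\cdot,\cdot\rangle$ to move brackets across the form, and observes that the cross term $[x,\xi]=ad^*_{\gg_+}(x)(\xi)-ad^*_{\gg_-}(\xi)(x)$ forced by the mixed bracket in $\gg$ is precisely what makes the two sides agree; this is the standard Manin-triple $\Leftrightarrow$ Lie-bialgebra dictionary. Having verified $\delta$ defines a Lie bracket on $\gg_+^*\cong\gg_-$ (automatic, since it is the transported bracket) and the compatibility \eqref{eq:cocycle}, Proposition~\ref{pr: bialg-co-Poisson}'s hypotheses are met and $(\gg_+,[\cdot,\cdot],\delta)$ is a Lie bialgebra. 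Finally, to see $\gg\cong D(\gg_+)$ as Lie algebras: both are $\gg_+\oplus\gg_+^*$ as vector spaces with the same brackets on the summands, and the mixed bracket in $\gg$ is, under the identification, exactly $ad^*_{\gg_+}(x)(\xi)-ad^*_{\gg_+^*}(\xi)(x)$ — which one checks by pairing against $\gg_+$ and $\gg_+^*$ and invoking invariance of the form once more.

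For part (b), given a Lie bialgebra $(\gg,[\cdot,\cdot],\delta)$, I would exhibit the Manin triple $(D(\gg),\gg,\gg^*)$: the bilinear form on $D(\gg)=\gg\oplus\gg^*$ given by the natural pairing, $\langle x+\xi, y+\eta\rangle=\xi(y)+\eta(x)$, is symmetric, nondegenerate, and invariant with respect to the double bracket (the invariance check is exactly the identity that makes the double's Jacobi identity work, already available from the proposition stating $D(\aa)$ is a Lie algebra). Both $\gg$ and $\gg^*$ are isotropic for this form. Then quasitriangularity, together with the fact that the $r$-matrix is the canonical element of $\gg\otimes\gg^*\subset D(\gg)\otimes D(\gg)$ corresponding to the trace form, is the content of the earlier ``the double is a quasitriangular Lie bialgebra'' proposition (Drinfeld); I would just cite \cite[Theorem 4.1]{ES} and note that the canonical element $r=\sum_i e_i\otimes e^i$ (for dual bases of $\gg$ and $\gg^*$) satisfies $r+r^{op}=$ the invariant Casimir of the form and $\delta_{D(\gg)}(g)=[r, g\otimes1+1\otimes g]$ restricts to $\delta$ on $\gg$.

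The main obstacle is the cocycle-condition verification in part (a): unwinding \eqref{eq:cocycle} for $\delta$ in terms of the $\gg_-$-bracket and the mixed bracket of the Manin triple, and checking that invariance of $\langle\cdot,\cdot\rangle$ makes everything cancel, is the one genuinely computational point. Everything else is either a straight citation of the results on $D(\aa)$ already recorded in this section or a routine identification of vector spaces carrying matching brackets. I would present the cocycle check by pairing with two arbitrary elements of $\gg_-$ and reducing it to the Jacobi identity in $\gg$, which keeps the computation short and conceptual rather than index-heavy.
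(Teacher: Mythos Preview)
Your proposal is correct and follows the standard textbook argument. However, the paper itself gives no proof of this proposition: it is introduced with ``The following fact is well known'' and left unproved, the surrounding section simply citing \cite[ch.~4]{ES} for the relevant background on Manin triples and doubles. So there is nothing to compare against beyond noting that your sketch is precisely the argument in \cite[ch.~4]{ES} that the paper is implicitly invoking; in particular your reduction of the cocycle condition to invariance of $\langle\cdot,\cdot\rangle$ and Jacobi in $\gg$, and your identification of the $r$-matrix with the canonical element $\sum_i e_i\otimes e^i$, match the standard proof that the paper takes for granted.
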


\subsection{Semidirect Lie Bialgebras}
The following is our key definition.

  \begin{definition}
  \label{def:sdplba}
  A semidirect Lie bialgebra  is a pair $(\gg, V)$ of a complex  semisimple Lie algebra $\gg$ and a finite-dimensional $\gg$-module $V$ together with a Lie bialgebra   structure $\delta$ on $\gg\ltimes V$ such that    $\gg$ with the restricted co-bracket is a Lie subbialgebra of $\gg\ltimes V$ and that for the restriction $\delta|_V: V\to V\wedge \gg$.
  \end{definition}
  We will sometimes refer to $\gg\ltimes V$ as a semidirect Lie bialgebra. We have the following fact.

\begin{proposition}
A  semidirect Lie bialgebra $\gg\ltimes V$ defines a co-Poisson  algebra $(U(\gg),S(V),\delta)$.
\end{proposition}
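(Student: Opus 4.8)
The plan is to show that a semidirect Lie bialgebra structure on $\gg\ltimes V$ gives rise, via Proposition~\ref{pr: bialg-co-Poisson}, to a co-Poisson structure on $U(\gg\ltimes V)$, and then to recognize that the semidirect factorization $U(\gg\ltimes V)\cong U(\gg)\otimes S(V)$ together with the condition $\delta|_V\colon V\to V\wedge\gg$ is precisely what is needed to package this as a co-Poisson \emph{module} algebra in the sense of Definition~\ref{def:co-Poisson-codec}, with $H=U(\gg)$ and $A=S(V)$.

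First I would observe that since $\gg$ is a Lie subbialgebra of $\gg\ltimes V$, the restriction of $\delta$ to $\gg$ makes $(U(\gg),\delta_H)$ a co-Poisson bialgebra by Proposition~\ref{pr: bialg-co-Poisson}; this supplies the base co-Poisson Hopf algebra. Next, $V$ is a $\gg$-module, hence $S(V)$ is a cocommutative $U(\gg)$-module bialgebra (with $V$ primitive), so $A=S(V)$ is an object of the appropriate type. The semidirect factorization $U(\gg\ltimes V)=U(\gg)\otimes S(V)$ identifies the enveloping algebra of the inhomogeneous Lie algebra with $H\otimes A$, and this identification is compatible with the coalgebra structures because $V$ and $\gg$ both consist of primitive elements and the bialgebra structure of Proposition (the one just before this subsection, on $H\otimes A$) is exactly the one coming from $U(\gg\ltimes V)$.

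The main content is then to check that the co-Poisson cobracket $\delta$ on $U(\gg\ltimes V)$, transported to $H\otimes A=U(\gg)\otimes S(V)$, is a \emph{co-decoration} in the precise sense required: that $\delta(h\otimes 1)=\sigma_{23}\circ(\delta_H(h)\otimes(1_A)^{\otimes 2})$, which is immediate since the co-Poisson structure on $U(\gg\ltimes V)$ restricts to that of $U(\gg)$ on the subbialgebra $U(\gg)$; and that $\delta(1\otimes a)\in (H\otimes 1)\wedge(1\otimes A)$ for $a\in S(V)$. For $a=v\in V$ this is exactly the hypothesis $\delta|_V\colon V\to V\wedge\gg$ (reading $V\wedge\gg\subset (\gg\otimes V)\oplus(V\otimes\gg)$ inside $U(\gg\ltimes V)^{\otimes 2}$ as living in $(H\otimes A)\wedge(A\otimes H)\subset(H\otimes A)^{\otimes 2}$ via the factorization), and for general $a\in S(V)$ it follows by the co-Leibniz/compatibility rule \eqref{eq:co-Leibniz}, induction on degree, together with the fact that $\Delta(S(V))\subset S(V)\otimes S(V)$, so that the property $\delta(\cdot)\in (H\otimes A)\wedge(A\otimes H)$ is stable under multiplication in $A$. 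The co-skew-symmetry and co-Jacobi identity for $\delta$ on $H\otimes A$ are inherited from the fact that $\delta$ is the Lie cobracket of a genuine Lie bialgebra $\gg\ltimes V$ (so it satisfies co-skew-symmetry and co-Jacobi on generators) extended as a co-Poisson bracket, which is exactly the content of Definition~\ref{def:co-Poisson-codec}.

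The step I expect to be the main obstacle is verifying the closure statement $\delta(1\otimes a)\in(H\otimes 1)\wedge(1\otimes A)$ for all $a\in S(V)$, not just for $a\in V$: one must track carefully how the inhomogeneous bracket $[g+v,g'+v']=[g,g']+g.v'-g'.v$ interacts with the co-Leibniz rule when computing $\delta$ on products $v_1\cdots v_k\in S(V)$, and confirm that no terms landing in $(\gg\otimes\gg)$ or in $(S^{\geq 2}V\otimes S^{\geq 1}V)$-type components are produced — i.e., that the "weight" bookkeeping (one factor in $H$, one in $A$) is preserved. Once this is in place, everything else is a matter of matching definitions, and the proposition follows.
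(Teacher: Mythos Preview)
Your approach is exactly the paper's: the proof there reads, in its entirety, ``The assertion follows directly from Proposition~\ref{pr: bialg-co-Poisson} and Definition~\ref{def:co-Poisson-codec}.'' You have in fact supplied more detail than the paper does---in particular, the closure check $\delta(1\otimes a)\in(H\otimes 1)\wedge(1\otimes A)$ that you flag as the main obstacle is not addressed there at all.
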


  \begin{proof}
  The assertion follows directly from Proposition \ref{pr: bialg-co-Poisson} and Definition \ref{def:co-Poisson-codec}.
  \end{proof}

We now obtain a first classification result.

\begin{proposition}
\label{pr:inh-triangular}
Let $\gg$ be a Lie algebra and let $V$ be a finite dimensional $\gg$-module. Let $r\in \Lambda^2 \gg$ be a skew symmetric solution of CYBE. Then $\delta:\gg\ltimes V\to\Lambda^2(\gg\ltimes V)$ given by
$$\delta(x)=[r,x\otimes 1+1\otimes x]$$
for all $x\in\gg\ltimes V$ defines a   semidirect  Lie bialgebra structure on $\gg\ltimes V$.
\end{proposition}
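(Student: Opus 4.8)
The plan is to verify that $\delta(x) = [r, x\otimes 1 + 1\otimes x]$ satisfies the two defining conditions of a Lie bialgebra from the Definition, and then to check the extra structural conditions of Definition \ref{def:sdplba} (that $\gg$ with the restricted cobracket is a Lie subbialgebra and that $\delta|_V$ lands in $V\wedge\gg$). Since $r\in\Lambda^2\gg$ is a skew-symmetric solution of the CYBE, the Proposition attributed to \cite{Ch-P} already tells us that $\delta(x)=[r,x\otimes 1+1\otimes x]$ defines a Lie bialgebra structure on $\gg$ itself; the only new content is to run the same argument with $\gg$ replaced by the larger Lie algebra $\gg\ltimes V$. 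The key observation that makes this work is that $r$, viewed as an element of $(\gg\ltimes V)^{\otimes 2}$ via the inclusion $\gg\hookrightarrow\gg\ltimes V$, is still a skew-symmetric solution of the CYBE in $(\gg\ltimes V)^{\otimes 3}$ — this is immediate because the CYBE is an identity purely among the tensor components of $r$ and the Lie bracket, and the bracket on $\gg\ltimes V$ restricts to the bracket on $\gg$.

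The steps I would carry out, in order: (1) Note $r\in\Lambda^2\gg\subset\Lambda^2(\gg\ltimes V)$ and that $r+r^{op}=0$ is trivially $(\gg\ltimes V)$-invariant, and that the CYBE for $r$ holds in $U(\gg\ltimes V)^{\otimes 3}$ since it already holds in $U(\gg)^{\otimes 3}$ and the embedding is a Lie algebra homomorphism. (2) Apply the Proposition of \cite{Ch-P} (the one just before this statement) with the Lie algebra taken to be $\gg\ltimes V$: this immediately yields that $\delta(x)=[r,x\otimes 1+1\otimes x]$ defines a Lie bialgebra structure on $\gg\ltimes V$, i.e. $\delta$ is a cocycle satisfying \eqref{eq:cocycle} and its dual is a Lie bracket. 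So conditions (1)–(2) of the Lie bialgebra definition come for free. (3) Check $\delta(\gg)\subset\gg\wedge\gg$: for $x\in\gg$, $[r,x\otimes 1+1\otimes x]$ is computed entirely inside $\gg\otimes\gg$ since $r\in\gg\otimes\gg$ and $[\gg,\gg]\subset\gg$; moreover this restricted cobracket is exactly the coboundary cobracket on $\gg$, which is a cobracket by the same \cite{Ch-P} Proposition, so $\gg$ is a Lie subbialgebra. (4) Check $\delta(V)\subset V\wedge\gg$: write $r=\sum_i a_i\wedge b_i$ with $a_i,b_i\in\gg$; then for $v\in V$, $\delta(v)=[r, v\otimes 1+1\otimes v]=\sum_i\big([a_i,v]\wedge b_i + a_i\wedge[b_i,v]\big)$, and since $[a_i,v]=a_i.v\in V$ while $b_i\in\gg$ (and symmetrically), every term lies in $V\otimes\gg\oplus\gg\otimes V$; skew-symmetry of the total expression then puts it in $V\wedge\gg$.

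There is essentially no hard obstacle here: the statement is a routine transport of the known coboundary construction to the semidirect-product Lie algebra, and the only thing requiring a moment's care is the bookkeeping in step (4) to see that the bracket of a $\gg$-component of $r$ against a vector in $V$ stays in $V$ (using precisely that $V$ is an ideal in $\gg\ltimes V$ on which $\gg$ acts by the module action, and that $[V,V]=0$ so no $V\otimes V$ terms ever appear). The mild subtlety worth flagging is that one should check the resulting cobracket $\delta$ on $\gg\ltimes V$ indeed \emph{restricts} to the prescribed cobracket on the subalgebra $\gg$ rather than merely inducing some a priori different structure — but this is automatic from the formula, since the same $r$ is used. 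Hence the proof amounts to: invoke the \cite{Ch-P} Proposition for $\gg\ltimes V$, then observe the two inclusions $\delta(\gg)\subset\gg\wedge\gg$ and $\delta(V)\subset V\wedge\gg$ by direct inspection of the coboundary formula.
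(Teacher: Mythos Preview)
Your proposal is correct and follows essentially the same approach as the paper: both first invoke the cited \cite{Ch-P} Proposition by noting that $r+r^{op}=0$ is trivially $(\gg\ltimes V)$-invariant and that the CYBE still holds in the larger algebra, then verify directly that $\delta(\gg)\subset\gg\wedge\gg$ and $\delta(V)\subset V\wedge\gg$. Your write-up is somewhat more explicit about the bookkeeping in the last step, but the logic is identical.
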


\begin{proof}
We establish first that  $\delta$ defines a Lie bialgebra structure.
Since $r$ is a solution of the CYBE it remains to verify that $r+r^{op}$ is $\gg\ltimes V$-invariant. But $r+r^{op}=0$ because we assumed $r$ to be skewsymmetric. It is now easy to see that $\gg$ is indeed a Lie subbialgebra of $\gg\ltimes V$. On the other hand $[r, 1\otimes x]\in \gg\otimes V$ and $[r_{(1)}, x] \otimes r_{(2)}=-[r_{(2)},x]\otimes r_{(1)}$ and hence $\delta(x)\in\gg\wedge V$. The proposition is proved.
\end{proof}

\begin{remark}
Proposition \ref{pr:inh-triangular} also follows directly from the well-known fact that if a Lie group $G$ acts on a manifold $M$, then any triangular Poisson Lie structure on $G$ defines a Poisson structure on $M$, compatible with the action of $G$.
\end{remark}

\section{Quasitriangular Structures and  Semidirect  Lie bialgebras}
\label{se:quasitriangular}

 In this section we will analyze  semidirect Lie bialgebras associated to quasitriangular Lie bialgebras. We will show how they give rise to certain Poisson algebras and classify all  semidirect Lie bialgebras $\gg\ltimes V$, where $\gg$ is a complex simple Lie algebra with Lie bialgebra structure defined by a Belavin-Drinfeld triple and $V$ a finite-dimensional simple $\gg$-module.

 \subsection{Quasitriangular Structures and Poisson brackets}

The main goal of this section is to prove the following result.

\begin{theorem}
\label{th:sdp-poisson}
Let $(\gg,[\cdot,\cdot],\delta)$ be a quasitriangular Lie bialgebra with classical $r$-matrix $r$ and let $\gg\ltimes V$ be a semidirect Lie bialgebra.  Then $r$ defines a Poisson structure on $S(V)$ defined on the generators by
$$\{u,v\}=r^-(u\otimes v)\ ,$$
where $r^-=\frac{1}{2}(r-r^{op})$.
\end{theorem}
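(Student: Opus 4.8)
The plan is to reduce the statement to checking three things: that the formula $\{u,v\}=r^-(u\otimes v)$ actually lands in $S(V)$ (i.e.\ in $V$, the degree-one part), that the resulting bracket is skew-symmetric, and that it satisfies the Jacobi identity; the Leibniz rule is then imposed by declaring $\{\cdot,\cdot\}$ to be a biderivation on the polynomial algebra $S(V)$, so nothing needs to be checked there beyond well-definedness on generators. The key input is the semidirect Lie bialgebra structure $\delta$ on $\gg\ltimes V$ together with the fact that, since $(\gg,\delta)$ is quasitriangular, $\delta$ is the coboundary $\delta(x)=[r,x\otimes1+1\otimes x]$ on $\gg$; I would first argue that the semidirect cocycle condition forces $\delta|_V(v)=[r,v\otimes 1+1\otimes v]$ as well (the coboundary of $r$ is a cocycle on the whole of $\gg\ltimes V$, and on $V$ it must agree with the given $\delta|_V$ up to the constraint $\delta|_V(V)\subset V\wedge\gg$), so that $\delta$ on all of $\gg\ltimes V$ is the $r$-coboundary.

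Next I would compute $\delta(v)$ for $v\in V$ explicitly inside $(\gg\ltimes V)^{\otimes 2}$. Writing $r=\sum r_{(1)}\otimes r_{(2)}$ with $r_{(i)}\in\gg$, we get $\delta(v)=\sum [r_{(1)},v]\otimes r_{(2)} + r_{(1)}\otimes[r_{(2)},v] = \sum (r_{(1)}.v)\otimes r_{(2)} + r_{(1)}\otimes (r_{(2)}.v)$, where $r_{(i)}.v\in V$ is the module action. The part of $\delta(v)$ landing in $V\otimes\gg$ is $\sum (r_{(1)}.v)\otimes r_{(2)}$ and in $\gg\otimes V$ is $\sum r_{(1)}\otimes(r_{(2)}.v)$; skew-symmetry of $\delta(v)\in V\wedge\gg$ relates these two. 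Now the induced Poisson bracket on $S(V)$ is obtained by dualizing: the cobracket $\delta|_V$ composed with the $\gg$-action $\gg\otimes V\to V$ (coming from the identification in the Manin-triple / double picture, or directly from the semidirect structure) gives a map $V\to V\otimes V$, i.e.\ $u\mapsto \sum (r_{(2)}.u)$ paired against the $V\otimes\gg$ slot — and tracking the action of $r_{(1)}$ on a second generator $v$ produces exactly $\{u,v\}=\sum (r_{(1)}.u)\otimes\!(r_{(2)}.v)$ contracted appropriately, which one then identifies with $r^-(u\otimes v)$ after symmetrizing. Concretely, since $S(V)$ is commutative, only the skew part $r^-$ of $r$ survives: $\{u,v\}+\{v,u\}$ must vanish, which pins the bracket down to $r^-$.

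For the Jacobi identity I would invoke the co-Jacobi identity for the Lie bialgebra $\gg\ltimes V$, dualize it, and observe that it becomes precisely the Jacobi identity for $\{\cdot,\cdot\}$ on $V$ — this is the standard fact that a co-Poisson structure on $U(\gg\ltimes V)$ restricts to a Poisson structure on the quotient/sub-coalgebra $S(V)$, so I can lean on Proposition~\ref{pr: bialg-co-Poisson} and the proposition preceding Theorem~\ref{th:sdp-poisson} which already asserts that a semidirect Lie bialgebra yields a co-Poisson algebra $(U(\gg),S(V),\delta)$. The one genuinely substantive point, and the step I expect to be the main obstacle, is verifying that $r^-(u\otimes v)$ indeed lies in $V$ rather than in higher-degree components of $S(V)$ — equivalently, that $\delta|_V$ has no component in $\Lambda^2 V$ (this is guaranteed by $\delta|_V(V)\subset V\wedge\gg$, part of the definition of semidirect Lie bialgebra) and that $r^-$, acting through $\gg\otimes\gg\to\mathrm{End}(V)\otimes\mathrm{End}(V)$ on $u\otimes v$ and then contracted via the pairing dual to $\delta$, outputs an element of $V$; I would spell this out by choosing a basis of $\gg$ and its dual under the invariant form, writing $r$ and $r^-$ in those coordinates, and checking the degree bookkeeping directly. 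Once that is clear, skew-symmetry is immediate from $r^-{}^{op}=-r^-$ and Jacobi follows from co-Jacobi as above, completing the proof.
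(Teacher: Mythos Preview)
Your proposal has two substantive gaps.

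First, the claim that ``the semidirect cocycle condition forces $\delta|_V(v)=[r,v\otimes 1+1\otimes v]$'' is not justified and in fact is generally false. The coboundary of $r$ on $\gg\ltimes V$ is a 1-cocycle, but for $v\in V$ it lands in $V\otimes\gg\oplus\gg\otimes V$ and is skew-symmetric only when $r+r^{op}$ is $(\gg\ltimes V)$-invariant; since $r+r^{op}$ is the Casimir of $\gg$ and $V$ acts nontrivially on it via the module action, this fails outside the triangular case. So you cannot identify the given $\delta|_V$ with the $r$-coboundary. The paper avoids this by passing to the Drinfeld double $D(\gg_+\ltimes V)$ and showing that the cobracket on $V$ is governed by the \emph{symmetric} canonical element $c=c_++c_+^{op}$, namely $\delta_+(v)=[c,1\otimes v - v\otimes 1]$; this formula is automatically skew and is the correct starting point.

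Second, your Jacobi argument (``dualize co-Jacobi'') does not actually produce the Jacobi identity for $\{u,v\}=r^-(u\otimes v)$: the co-Jacobi identity for $\delta$ on $\gg\ltimes V$ involves $\gg$-components, whereas the bracket you want is defined purely through the action of $r^-\in\gg\otimes\gg$ on $V\otimes V$. The link between the two is not formal. In the paper, co-Jacobi for $\delta|_V$ is expanded explicitly in terms of $c$ and, after collecting cyclic terms via the ordinary Jacobi identity, reduces to $[[c_{12},c_{23}],\Lambda^3 V]=0$. The crucial lemma (Lemma~\ref{le:r- to casimir}) then identifies $[c_{12},c_{23}]$ with the Schouten square $[[r^-,r^-]]$, so one obtains $[[r^-,r^-]](\Lambda^3 V)=0$; a cited result from \cite{ZW} converts this into the Jacobi identity for the quadratic bracket on $S(V)$. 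Without this $c\leftrightarrow r^-$ identity you have no mechanism to pass from the semidirect hypothesis to a statement about $r^-$.

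A smaller point: $r^-(u\otimes v)$ does not lie in $V$. Since $r^-\in\gg\otimes\gg$ acts on $V\otimes V$, the output is in $V\otimes V$ and the bracket $\{u,v\}$ is its image in $S^2V\subset S(V)$; the Poisson bracket here is quadratic, not linear, so the ``degree bookkeeping'' you worry about is a non-issue once this is understood.
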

\begin{proof}
In order to prove the theorem we have to describe the co-bracket on $\gg\ltimes V$ in terms of the classical $r$-matrix $r$. In order to accomplish this consider the Drinfeld double $D(\gg)$ and the associated Manin triple $(D(\gg),\gg,\gg^*)$. The following fact is well known.

\begin{lemma}\cite[ch. 4.2]{ES}
Let $(\gg,\delta)$ and $(\gg^*,\delta^*)$ be a Lie bialgebra and its dual.  The embeddings of $\gg$ and $\gg^*$ define maps $\delta:D(\gg)\to D(\gg)\wedge D(\gg)$, resp. $\delta^*:D(\gg)\to D(\gg)\wedge D(\gg)$. The Lie bialgebra structure $\tilde\delta$ on the double $D(\gg)$ is given by
$$\tilde\delta=\delta-\delta^*\ .$$
Moreover, $\gg$ and $\gg^*$ are Lie subbialgebras of $D(\gg)$.
\end{lemma}

It is well known (see e.g. \cite[ch 2]{HY}) that $r$ defines two Lie bialgebra homomorphisms $r_{\pm}:\gg^*\to \gg$ in the following way for $\xi\in\gg^*$:
$$r_+(\xi)=(\xi\otimes Id)r\ ,\quad r_-(\xi)=(id\otimes \xi)r\ .$$
Denote the image of $r_+$ by $\gg_+$.
Consider the Lie algebra $\gg_+\ltimes V$ and its double $D(\gg_+\ltimes V)$. Since $\gg_+$ is a Lie subbialgebra of $\gg$, we obtain that $\gg_+^*$ is a Lie subalgebra of $\gg^*$ and hence that  $D(\gg_+\otimes V)\subset D(\gg\ltimes V)$, is a Lie subalgebra. The Lie bialgebra structure on $D(\gg_+\ltimes V)$ is given by definition for all $x\in\gg_+\ltimes V$:
\begin{equation}
\label{eq:coalg on g+V}
\delta(x)=\sum_i e_i\otimes ad^*e^i(x)+e^*_i\otimes  ad^*e^i(x)+\sum_j v_j\otimes ad^*v^j(x)+v^j\otimes ad^*v_j (x)\ .
\end{equation}
where $\{e_i:i=1\in[1,n]\}$ and $\{v_j: j\in[1,m]\}$ are bases of $\gg_+$ and $V$ respectively and the $e^i$ and $v^j$ denote the corresponding dual basis vectors. Using the fact that $\delta_+(v)\in\gg_+\wedge V$ we obtain that
$$\delta_+(v)=\sum_i e_i\otimes ad^*e^i(v)+e^*_i\otimes  ad^*e^i(v)-ad^*e^i(v)\otimes e_i-ad^*e_i(v)\otimes e^i\ ,$$
where $\delta_+$ denotes the cobracket on $\gg_+\ltimes V$.
Denote by $c_+=\sum_i e_i\otimes e^i$ the canonical element of $\gg_+\otimes\gg_+^*$, by $c_+^{op}\in \gg_+^*\otimes \gg_+ $ its opposite, and by $c=c_++c_+^{op}$. We obtain that
$$\delta_+(v)=[c,1\otimes v-v\otimes 1]\ .$$

In order to complete our proof we will have to use the fact that $ \delta_V $ satisfies the co-Jacobi identity, i.e.
\begin{equation}
\label{eq:co-Jacobi1}
 Alt\circ (\delta_V\otimes 1)\circ \delta_V=0\ ,
 \end{equation}
where $Alt$ denotes the sum over all cyclic permutations $Alt=1+\tau_{123}+\tau_{132}$. Using the notations $c_{12}=c\otimes 1$, $c_{23}=1\otimes c$ and $c_{13}=\tau_{23}\circ (c\otimes 1)\circ \tau_{23}$ and the fact that $c$ is symmetric we can now compute:
$$(\delta_V\otimes 1)\circ \delta_V=(\delta_V\otimes 1)\circ [c,v\otimes 1-1\otimes v]$$
$$=[c_{12}, [c_{13},v\otimes 1\otimes 1-1\otimes 1\otimes v]-[c_{23}, 1\otimes v\otimes 1-1\otimes 1\otimes v]] $$
$$=[c_{12}, [c_{13},v\otimes 1\otimes 1-1\otimes 1\otimes v- 1\otimes v\otimes 1]]-[c_{12}, [c_{23}, 1\otimes v\otimes 1-1\otimes 1\otimes v+v\otimes 1\otimes 1]]\ .$$
The cyclic permutations are given by:
$$\tau_{123}\circ (\delta_V\otimes 1)\circ \delta_V)=[c_{23},[c_{12},1\otimes v\otimes 1-v\otimes 1\otimes 1\pm 1\otimes 1\otimes v]]-[c_{23}, [c_{13}, 1\otimes 1\otimes v-v\otimes 1\otimes 1\pm 1\otimes v\otimes 1]]\ ,$$
$$\tau_{132}\circ(\delta_V\otimes 1)\circ \delta_V)=[c_{13},[c_{23},1\otimes 1\otimes v-1\otimes v\otimes 1\pm v\otimes 1\otimes 1]]-[c_{13}, [c_{12}, v\otimes 1\otimes 1-1\otimes v\otimes 1\pm 1\otimes 1\otimes v]]\ .$$

Note that by the Jacobi identity $[a,[b,c]]-[b,[[a,c]=[[a,b],c]$. Therefore we can collect terms and rewrite the co-Jacobi identity  \eqref{eq:co-Jacobi1} in the form
$$0=[[c_{12}, c_{13}],v\otimes 1\otimes 1-1\otimes 1\otimes v- 1\otimes v\otimes 1]+[[c_{23}, c_{12}], 1\otimes v\otimes 1-1\otimes 1\otimes v- v\otimes 1\otimes 1]$$
$$+[[c_{13},c_{23}], 1\otimes v\otimes 1-1\otimes 1\otimes v- v\otimes 1\otimes 1] \ .$$

 Denote for $\phi\in \gg\otimes \gg$ by $[[\phi,\phi]]$ the Yang-Baxter operator (or Schouten square)
 $$[[\phi,\phi]]=[\phi_{12},\phi_{13}]+[\phi_{12},\phi_{23}]+[\phi_{13},\phi_{23}]\ .$$
We need the following known fact.
\begin{lemma}
\label{le:r- to casimir}
Let $\gg$ be a quasitriangular Lie algebra with $r$-matrix $r$ and $c=\frac{1}{2}(r+r^{op})$ and $r=\frac{1}{2}(r-r^{op})$. Then
$$ [c_{12}, c_{23}]=[c_{23},c_{13}]=[c_{13},c_{12}]=[[r^-,r^-]] \in \Lambda^3 \gg\ .$$
\end{lemma}
\begin{proof}
A proof of the identity $[c_{12}, c_{23}]=[[r^-,r^-]]$ and that $[[r^-,r^-]]\in\Lambda^3\gg$ can be found, in a more general setup, in \cite{ZW}, and the remaining identities follow directly from the fact that $[c_{12},c_{23}]$ is invariant under cyclic permutations.
\end{proof}

Thus we obtain that for $v\in V$:
$$0=-[[c_{12},c_{23}],v\otimes 1\otimes 1+1\otimes v\otimes 1+1\otimes 1\otimes v]\ .$$
Now let $u\wedge v\wedge w\in\Lambda^3 V$. Denote $\Delta_{123}(v)=v\otimes 1\otimes 1+1\otimes v\otimes 1+1\otimes 1\otimes v$ and  define $AS(u,v)$ as
$$AS(u,v)=1\otimes u\otimes v-1\otimes v\otimes u+v\otimes 1\otimes u-u\otimes 1\otimes v+v\otimes u\otimes 1-u\otimes v\otimes 1\ .$$

 Since $[u',v']=0$ for all $u',v'\in V$  we have
$$[[c_{12},c_{23}], u\wedge v\wedge w]=[[[c_{12},c_{23}],\Delta_{123}(u)], AS(v,w)]=0\ .$$
Therefore, $[[c_{12},c_{23}],\Lambda^3 V]=0$ and $[[r^-,r^-]](\Lambda^3 V)=0$. The assertion of the theorem now follows from the following fact.

\begin{proposition}\cite[Theorem 2.21]{ZW}
Let $V$ be a vectorspace and let $\Phi^-$ be a skewsymmetric endomorphism of $V\otimes V$. Then the following are equivalent:

\noindent(a) the Schouten square satisfies $[[\Phi^-,\Phi^-]](\Lambda^3 V)=0$.

\noindent(b)the endomorphism $\Phi^-$ defines a Poisson bracket on the symmetric algebra $S(V)$ given on the generators $\{u,v\}=\Phi^-(u\otimes v)$.
\end{proposition}
The proposition implies that $c_+^-=\frac{1}{2}(c_+-c_+^{op})$ defines a Poisson bracket on $S(V)$. We obtain under the standard identification $\phi:\gg_-\to \gg_+^*$ that $1\otimes \phi^{-1}(c_+)=r$ and hence that  $\phi(c_+^-)=r^-$ defines a Poisson bracket on $S(V)$.
Theorem \ref{th:sdp-poisson} is proved.
\end{proof}

\begin{remark}  Theorem  \ref{th:sdp-poisson}  yields another proof for Proposition \ref{pr:inh-triangular}, since in this situation $c=\frac{1}{1}(r+r^{op})=0$.
\end{remark}
\subsection{Semidirect Lie Bialgebras and Simple Lie algebras}
Quasitriangular Lie bialgebra structures associated to simple complex Lie algebras were classified by Belavin and Drinfeld in \cite{BD} in terms of Belavin-Drinfeld triples.
We first recall the classification of such Lie bialgebras by Belavin and Drinfeld in \cite{BD}, here presented following Etingof and Schiffmann \cite[ch. 5.3]{ES}, where there are also several examples and proofs. Then we will give a classification result for  semidirect Lie Bialgebras arising from these quasitriangular structures.

 Let $\gg=\nn_-\oplus \hh\oplus \nn_+$ be a simple complex Lie algebra, with given triangular decomposition, and let $\langle \cdot.\cdot\rangle$ be an invariant bilinear form on $\gg$ such that the square of the length of a long root is $2$. Denote by  $P(\gg)$ the lattice of integral weights, by $P^+(\gg)$ the monoid of dominant weights.  Denote  by  $R(\gg)$ the root-system of $\gg$ and by $R^{\pm}(\gg)$ the set of positive, resp. negative roots. Denote by $(\cdot,\cdot)_{\hh}$ and $(\cdot,\cdot)_{\hh^*}$ the standard inner product on $\hh$ and $\hh^*$, which we identify via the inner product.  Denote by $R(\gg)\subset \hh^*$ the set of roots, by $R^+(\gg)$ (resp. $R^-(\gg)$) the set of positive (resp. negative) roots and  by $\Delta=\{\alpha_1,\ldots,\alpha_n\}$ the set of simple  roots.  Denote by $E_{\alpha}$ and $F_\alpha$  for $\alpha\in R^+(\gg)$ and $H_{\alpha}\subset \hh$, $\alpha\in \R^+(\gg)$ the standard generators of $\gg$ with the property that $[E_\alpha,F_{\alpha}]=H_\alpha=\check\alpha =2\frac{\alpha}{(\alpha,\alpha)}\in \hh\subset \gg$.

   Let $\Delta$ be the basis for $R(\gg)$ corresponding to the chosen  triangular decomposition, and denote by  $\omega_i$ for $\alpha_i\in\Delta$ the $i$-th fundamental weight.

 \begin{definition}
 A Belavin-Drinfeld triple is a triple $(\Delta_1,\Delta_2,\tau)$ where $\Delta_1,\Delta_2\subset \Delta$ and $\tau:\Delta_1\to\Delta_2$ such that
 \begin{enumerate}
 \item $\tau$ is a bijective map preserving the form $(\cdot,\cdot)$.
 \item for any $\delta\in \Delta_1$ there exists $n>0$ such that $\tau(\delta)\in \Delta_2\backslash \Delta_1$.
 \end{enumerate}
 \end{definition}

The isomorphism $\tau$ extends to isomorphisms $\tau:\ZZ\Delta_1\to\ZZ\Delta_2$ and hence extends as follows  to $\tau:\gg_{\Delta_1} \to \gg_{\Delta_2}$, where $ \gg_{\Delta_i}$ is the semisimple part of the Levi subalgebra  corresponding to $\Delta_i$: For each root $\alpha$ of $ \gg_{\Delta_1}$  define
$\tau(E_{\alpha})=E_{\tau(\alpha)}$. Note that the isomorphism is not unique and depends on our previous choice of root vectors.

 Note that the second property yields that $\tau^n(\alpha)\ne \alpha$ for all $n>0$ for $\alpha\in \ZZ\Delta_1$.
We can therefore define a partial order on the set of positive roots $R^+(\gg)$ by setting $\alpha<\beta$ if there exists $n>0$ such that $\tau^n(\alpha)=\beta$ for some $n>0$. Denote by $c\in S^2\gg$ the Casimir element of $\gg$ and by $c_0$ its $\hh$-part.  We can now state the Belavin-Drinfeld classification.

\begin{theorem}(Belavin-Drinfeld \cite{BD})
Let $\gg$ be a complex simple Lie algebra  with invariant nondegenerate form $\langle\cdot,\cdot\rangle$ and triangular decomposition $\gg=\nn^+\oplus \hh\oplus \nn^-$. Let $(\Delta_1,\Delta_2,\tau)$ be a Belavin-Drinfeld triple. Let $r_0\in \hh\otimes \hh$ satisfy

$$r_0+r_0^{21}=c_0$$
$$(\tau(\alpha)\otimes 1)+(1\otimes \alpha)r_0=0\ , \text{for $\alpha\in \Gamma_1\subset \hh^*$}\ .$$

Now define
$$ r=r_0+\sum_{\alpha\in R^+} F_\alpha\otimes E_\alpha +\sum_{\alpha,\beta\in R^+, \alpha<\beta} F_{\alpha}\otimes E_\beta\ .$$

Then

\begin{enumerate}
\item $r$ satisfies CYBE  \eqref{eq:CYBE} and $r+r^{21}=c$.
\item Any $r$ satisfying CYBE  \eqref{eq:CYBE}  and $r+r^{21}=c$ is of the above form for a suitable choice of triangular decomposition of $\gg$.
\end{enumerate}
\end{theorem}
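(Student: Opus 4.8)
We only outline the argument, due to Belavin and Drinfeld \cite{BD} (see also \cite[ch.~5.3]{ES}). The plan is to treat the two assertions separately: (1) is a finite verification in $\gg\otimes\gg$ and $\gg^{\otimes 3}$, whereas (2) is the genuine classification, obtained by extracting from an arbitrary solution a pair of subalgebras of $\gg$ together with an isometric isomorphism between certain reductive sub-quotients.

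For (1), write $r=r_0+r_{\mathrm{st}}+r_\tau$ with $r_{\mathrm{st}}=\sum_{\alpha\in R^+}F_\alpha\otimes E_\alpha$ and $r_\tau$ the skew-symmetric $\tau$-chain correction. The identity $r+r^{21}=c$ is then immediate: writing $c=c_0+\sum_{\alpha\in R^+}(E_\alpha\otimes F_\alpha+F_\alpha\otimes E_\alpha)$, the Cartan part reduces to the hypothesis $r_0+r_0^{21}=c_0$, the term $r_{\mathrm{st}}+r_{\mathrm{st}}^{21}$ supplies exactly the off-diagonal part of $c$, and $r_\tau+r_\tau^{21}=0$ by skew-symmetry; condition (2) of the Belavin--Drinfeld triple guarantees that the $\tau$-chains terminate, so that $r_\tau$ is a genuine finite correction. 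For the CYBE \eqref{eq:CYBE} I would expand $[r_{12},r_{13}]+[r_{12},r_{23}]+[r_{13},r_{23}]$ and sort the result by $\hh$-weight and by the number of Cartan factors. The terms carrying at least one Cartan factor cancel precisely because of the second constraint on $r_0$, namely $(\tau(\alpha)\otimes1+1\otimes\alpha)r_0=0$ for $\alpha\in\Gamma_1$: this is exactly the relation needed to kill the Cartan contributions produced by the brackets of Cartan elements with root vectors across a $\tau$-step. The remaining purely root-vectorial terms organize themselves along the $\tau$-orbits, within each of which the contributions of $r_{\mathrm{st}}$ and $r_\tau$ telescope. I expect this orbit-by-orbit bookkeeping to be the main labor of (1); it is shortened by noting that $r+r^{21}=c$ is $\gg$-invariant, whence $[r_{12},r_{13}]+[r_{12},r_{23}]+[r_{13},r_{23}]\in(\Lambda^3\gg)^{\gg}$, so that only the invariant weight components need be checked.

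For (2), suppose $r\in\gg\otimes\gg$ satisfies CYBE \eqref{eq:CYBE} and $r+r^{21}=c$. Using $r_\pm\colon\gg^*\to\gg$, $r_+(\xi)=(\xi\otimes Id)r$ and $r_-(\xi)=(Id\otimes\xi)r$, one has $r_+(\xi)+r_-(\xi)=(\xi\otimes Id)c$ for all $\xi$, so that $r_++r_-$ is the isomorphism $\gg^*\to\gg$ induced by the form; hence $\gg_\pm:=\mathrm{Im}(r_\pm)$ are Lie subalgebras of $\gg$ with $\gg_++\gg_-=\gg$, and $r$ induces an isomorphism $\theta$ between suitable reductive sub-quotients of $\gg_+$ and $\gg_-$, CYBE \eqref{eq:CYBE} being equivalent to $\theta$ respecting the induced brackets. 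Since $\gg$ is simple with invariant nondegenerate form, one shows, after a conjugation, that $\gg_\pm$ each contain one member of a fixed pair of opposite Borel subalgebras, so that $\gg_+$ and $\gg_-$ are the standard parabolics attached to subsets $\Delta_1,\Delta_2\subseteq\Delta$; then $\theta$ descends to an isometric Lie isomorphism of the semisimple parts $\gg_{\Delta_1}\to\gg_{\Delta_2}$, which by the isometry property is induced by a form-preserving bijection $\tau\colon\Delta_1\to\Delta_2$, while the termination of the $\tau$-chains --- equivalently, the absence of periodic $\tau$-orbits --- is condition (2) of the triple. Following $\theta$ on $\hh$ then recovers $r_0$ together with the two displayed constraints on it, and reassembling $r$ from $(\Delta_1,\Delta_2,\tau,r_0)$ returns the stated formula, after adjusting the triangular decomposition so that $\gg_+\supseteq\bb^+$.

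The principal obstacle is the second half: proving that an a priori arbitrary solution forces $\gg_\pm$ to be parabolic relative to a common pair of opposite Borels, and that the resulting isometry between Levi factors is ``diagrammatic''. The delicate point is that $\gg_+\cap\gg_-$ is in general a proper reductive subalgebra which is not a direct summand, so the reduction to subsets of $\Delta$ and a root-system isometry must be carried out with care; moreover the non-periodicity condition (2) --- exactly what makes $r_\tau$ a finite sum and the resulting $r$ genuinely algebraic rather than merely formal --- emerges only once this structure is in place.
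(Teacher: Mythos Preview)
The paper does not prove this theorem at all: it is stated as a classical result of Belavin and Drinfeld, cited from \cite{BD} and presented following \cite[ch.~5.3]{ES}, with no proof given in the paper itself. Your outline is a reasonable sketch of the standard Belavin--Drinfeld argument (verification of $r+r^{21}=c$ and CYBE for part (1), extraction of the subalgebras $\gg_\pm$ via $r_\pm$ and reduction to a diagram isometry for part (2)), and there is nothing in the paper to compare it against.
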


We will call such Lie bialgebra structures Belavin-Drinfeld Lie bialgebras. Note that any quasitriangular Lie bialgebra structure on a simple Lie algebra $\gg$ with classical $r$-matrix $r$ which does not correspond to a Belavin Drinfeld triple must be triangular. Indeed,  because  the space of symmetric invariants $(S^2\gg)^{\gg}$ is one-dimensional, we obtain that $r+r^{21}=0$ and $r\in\Lambda^2\gg$.
The following theorem is our main result regarding  semidirect Lie bialgebras.

  \begin{theorem}
 \label{th:main}
  Let $\gg$ be a complex simple Lie algebra with a  Belavin Drinfeld Lie bialgebra  structure. Additionally, let  $V$  be a simple $\gg$-module. Then the following are equivalent:

  \noindent (a) There exists a Lie algebra $\gg'=\gg\oplus \zz$, where $\zz$ is finite-dimensional and central in $\gg'$ such that $\gg'\ltimes V$ admits a semidirect Lie bialgebra structure.

  \noindent(b) The module $V$ is geometrically decomposable in the sense of Howe (\cite{Ho}); i.e. it corresponds to an Abelian radical.

  \noindent(c) The pair $(\gg,V)$ is one of the following:

    (i) $(sl_n(\CC),V_\lambda)$ where $\lambda\in\{\omega_1,2\omega_1,\omega_2,\omega_{n-2},\omega_{n-1}, 2\omega_{n-1}\}$.

 (ii) $(so(n),V_{\omega_1})$,$(so(5),V_{\omega_2})$, $(so(8),V_{\omega_i})$, $i=3,4$ and     $ (so(10),V_{\omega_j})$, $j=4,5$.

 (iii) $(sp(4),V_{\omega_2})$.

(iv) $(E_6,V_{\omega_1})$ and $(E_6, V_{\omega_6})$.
  \end{theorem}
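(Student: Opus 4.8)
The plan is to prove the chain of equivalences by first establishing $(a)\Leftrightarrow(c)$ via explicit classification, and then recognizing that the list in $(c)$ is precisely Howe's list of geometrically decomposable modules, which gives $(b)\Leftrightarrow(c)$ essentially by inspection. For the main equivalence $(a)\Leftrightarrow(c)$, the crucial reduction is Theorem \ref{th:sdp-poisson}: if $\gg'\ltimes V$ carries a semidirect Lie bialgebra structure and $\gg$ (hence $\gg'=\gg\oplus\zz$) has the quasitriangular $r$-matrix $r$, then $r^-$ must define a Poisson bracket on $S(V)$, i.e. the restriction of the Schouten square $[[r^-,r^-]]$ to $\Lambda^3 V$ vanishes. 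So a necessary condition for $(a)$ is: the $r$-matrix coming from the Belavin-Drinfeld triple on $\gg'$ restricts to a Poisson structure on $S(V)$. The first step, therefore, is to invoke the classification of exactly such Poisson structures from \cite{ZW} (Theorems 1.1 and 1.2 there), which produces precisely the list in $(c)$ together with the requisite central extension $\zz$. This proves $(a)\Rightarrow(c)$.

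For the converse $(c)\Rightarrow(a)$, I would proceed module-by-module, or better, use the uniform description: each pair $(\gg,V)$ in the list is realized as a maximal parabolic $\pp=\ll\oplus\nn$ with abelian nilradical $\nn\cong V$ inside a larger simple Lie algebra $\widetilde\gg$, where $\ll\cong\gg'=\gg\oplus\zz$ is the Levi (the center $\zz$ being the one-dimensional center of the Levi, possibly enlarged). Then the standard (or Belavin-Drinfeld) Lie bialgebra structure on $\widetilde\gg$ restricts to the parabolic $\pp=\gg'\ltimes V$, because the parabolic is a coisotropic — in fact here a Lie sub-bialgebra — with respect to the cobracket built from the $r$-matrix. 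One checks that $\delta(\gg')\subset\Lambda^2\gg'$ and $\delta(V)\subset \gg'\wedge V$, which are exactly the conditions in Definition \ref{def:sdplba}; the abelianness of $\nn=V$ makes the semidirect structure transparent. For the Belavin-Drinfeld case one notes that the triple $(\Delta_1,\Delta_2,\tau)$ on $\gg$ can be chosen compatibly (the constraint $\tau(\delta)\in\Delta_2\setminus\Delta_1$ and the $r_0$-equation survive under the embedding into $\widetilde\gg$), so the argument is uniform across all quasitriangular structures. Alternatively, when $\gg$ has the standard structure one can give the direct construction using Hodges–Yakimov's description \cite{HY},\cite{HY1} of the double of a Lie bialgebra, exhibiting $\gg'\ltimes V$ explicitly inside $D(\gg')$.

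The main obstacle is the \emph{negative} direction embedded in $(a)\Rightarrow(c)$: one must show that for every pair $(\gg,V)$ \emph{not} on the list, no central extension $\gg'=\gg\oplus\zz$ and no choice of semidirect cobracket works. Theorem \ref{th:sdp-poisson} reduces this to showing $[[r^-,r^-]](\Lambda^3 V)\neq 0$ for all admissible $r$ — but a priori $r$ lives on $\gg'$, not $\gg$, so one must first argue that adding a central summand $\zz$ does not enlarge the supply of usable $r$-matrices beyond what \cite{ZW} already classifies (the $r$-matrix's "$V$-component" is unaffected by central directions, since $\zz$ acts trivially and the cobracket on $V$ only sees $\gg_+\wedge V$; the central part can at most contribute an extra abelian/triangular piece that does not change the obstruction on $\Lambda^3 V$). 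Pinning down this last point rigorously — that the classification in \cite{ZW} is genuinely exhaustive once central extensions are allowed — is where the real work lies; the rest is assembling known ingredients. Finally, the identification of the list with geometrically decomposable modules (Howe \cite[ch.4]{Ho}, Stembridge \cite{Stem}) and with abelian nilradicals of maximal parabolics is a direct comparison of tables, giving $(b)\Leftrightarrow(c)$ and completing the proof. \endproof
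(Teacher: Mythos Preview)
Your overall architecture matches the paper's: use Theorem \ref{th:sdp-poisson} to extract the necessary Poisson condition, invoke the classification in \cite{ZW}, and for the converse realize each pair as a parabolic with abelian nilradical inside a larger simple Lie algebra (this is the paper's Proposition \ref{pr:abelian rads-lie bi}). The equivalence $(b)\Leftrightarrow(c)$ is indeed just Howe's table.

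However, there is a genuine gap in your implication $(a)\Rightarrow(c)$. You write that the classification in \cite{ZW} ``produces precisely the list in $(c)$'', and later that the negative direction ``reduces to showing $[[r^-,r^-]](\Lambda^3 V)\neq 0$ for all admissible $r$''. Neither statement is correct. The Poisson condition of Theorem \ref{th:sdp-poisson} is only \emph{necessary}, not sufficient, and the classification in \cite{ZW} of simple $\gg$-modules with $[[r^-,r^-]](\Lambda^3 V)=0$ yields the list in $(c)$ \emph{together with} the standard module $(sp(2n),V_{\omega_1})$. For this extra case the Poisson bracket on $S(V_{\omega_1})$ exists (indeed the module is flat in the sense of \cite{BZ}), yet there is \emph{no} semidirect Lie bialgebra structure on any $\gg'\ltimes V_{\omega_1}$. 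So your proposed reduction does not close, and a separate argument is required to eliminate $(sp(2n),V_{\omega_1})$.

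The paper handles this in Proposition \ref{pr: sp 2n}: one embeds $sp(2n)$ as the semisimple Levi of the parabolic in $sp(2n+2)$ obtained by deleting the first node; the nilradical is $V_{\omega_1}\oplus V_0$ and is \emph{not} abelian. The $D(sp(2n))$-action determines the would-be cobracket on $V_{\omega_1}$, and one checks that for highest and lowest weight vectors $v,v'\in V_{\omega_1}$ the cocycle identity \eqref{eq:cocycle} forces $\delta([v,v'])\neq 0$, contradicting $[v,v']=0$ in an abelian $V_{\omega_1}$. Your discussion of whether central extensions enlarge the supply of $r$-matrices is beside the point here: the obstruction for $sp(2n)$ is not about $[[r^-,r^-]]$ at all, but about the failure of the cocycle condition itself.
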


\begin{proof}
It follows directly from  the proof of Theorem \ref{th:sdp-poisson} that if $\gg\ltimes V$ is a semidirect Lie bialgebra then $r^{-}=\frac{1}{2}(r-r^{op})$ defines a Poisson bracket on $S(V)$ or equivalently $[c_{12},c_{23}](\Lambda^3 V)=0\subset S^3V$ by  Lemma \ref{le:r- to casimir}. However, all simple modules for complex simple Lie algebras with this property were classified  by the author in Theorem 1.1 of the paper \cite{ZW}. The only modules not appearing in the list of Theorem \ref{th:main} are the natural representation $V_{\omega_1}$ of the Lie algebras  $sp(2n)$. However, we prove further below in Proposition \ref{pr: sp 2n} that $\gg\ltimes V_{\omega_1}$ does not admit a semidirect Lie bialgebra structure.

 It remains to show that there exist associated Lie bialgebra structures for the modules listed in Theorem \ref{th:main}. Recall that a parabolic subalgebra $\pp$ of a semisimple Lie algebra $\gg$ splits as a semidirect $\pp\cong \ll\ltimes \nn$, where $\ll$ is the Levi subalgebra and $\nn$ the radical, a nilpotent Lie algebra. Moreover, $\ll\cong \ll'\oplus \zz$, where $\ll'$ is semisimple and $\zz$ is central.
 Recall that the Abelian radicals in simple Lie algebras are well known and Howe showed in \cite{Ho} that the radicals are isomorphic, as modules over the semisimple part of the Levi subalgebra to the simple geometrically decomposable modules listed in  Theorem \ref{th:main}.
The following fact now yields the assertion of the theorem.

  \begin{proposition}
  \label{pr:abelian rads-lie bi}
 Let $\gg$ be a simple Lie algebra with the Belavin-Drinfeld Lie bialgebra  structure and $\pp$ a parabolic  subalgebra  which splits as  $\ll\ltimes \nn$. Suppose that $\ll\cong \ll'\oplus\zz$, where $\ll'$ is semisimple and $\zz$  central.  Suppose that $\ll'$ is simple with a Belavin-Drinfeld Lie bialgebra structure and suppose that $\nn$ is an Abelian Lie algebra. Then there exists a Lie bialgebra structure on $\gg$ such that $\pp\cong (\ll'\oplus\zz)\ltimes \nn$ is a Lie subbialgebra of $\gg$, hence  a semidirect Lie bialgebra.
 \end{proposition}
 \begin{proof}
  The parabolic subalgebras with Abelian radical are well known, indeed the pairs $(\ll',\nn)$ correspond to the ones listed in Theorem \ref{th:main} (iii), where $\nn$ is interpreted as a module over $\ll'$. Denote by $i:D_{\ll'}\hookrightarrow D_\gg$ the embedding of the Dynkin diagram of $\ll'$ into the Dynkin diagram of $\gg$ corresponding to the embedding of the Levi $\ll'$ into $\gg$. It is now easy to observe that if $(\Gamma_1,\Gamma_2,\tau)$ is the Belavin-Drinfeld triple corresponding to the Lie bialgebra structure on $\ll'$, then $i(\Gamma_1,\Gamma_2,\tau)$ defines a Belavin-Drinfeld triple for $\gg$ which has the desired properties. The proposition is proved.
   \end{proof}

 Theorem \ref{th:main} now follows, as the equivalence of (b) and (c) is shown by Howe in \cite[ch. 5.5]{Ho}.
\end{proof}

Recall that all quasitriangular Lie bialgebra structures on simple Lie algebras are either of triangular or of Belavin Drinfeld type. Theorem  \ref{th:main} and Proposition \ref{pr:inh-triangular} have the following direct consequence.
\begin{corollary}
Let $\gg$ be a simple Lie algebra with quasitriangular Lie bialgebra structure, and let  $\gg'=\gg\oplus\zz$ as above and let $\gg\ltimes V$ be a semidirect Lie bialgebra such that $V$ is a simple $\gg$-module. Then the Lie bialgebra structure on $\gg$ is triangular and $V$ may be any $\gg$-module, or $(\gg,V)$ is one of the pairs listed in Theorem  \ref{th:main}.
\end{corollary}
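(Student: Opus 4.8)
The plan is to reduce the statement to a dichotomy on the underlying Lie bialgebra structure of $\gg$, and then invoke the two results already available. By the paragraph immediately preceding the Corollary, every quasitriangular Lie bialgebra structure on a simple complex Lie algebra $\gg$ is either triangular (i.e. the classical $r$-matrix lies in $\Lambda^2\gg$, because $(S^2\gg)^\gg$ is one-dimensional and forces $r+r^{21}=0$) or of Belavin--Drinfeld type. So I would split the argument into these two cases and treat each separately.

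First, suppose the Lie bialgebra structure on $\gg$ is triangular, with $r\in\Lambda^2\gg$ a skew-symmetric solution of CYBE. Then Proposition \ref{pr:inh-triangular} applies verbatim to the pair $(\gg,V)$ for \emph{any} finite-dimensional $\gg$-module $V$: the formula $\delta(x)=[r,x\otimes 1+1\otimes x]$ defines a semidirect Lie bialgebra structure on $\gg\ltimes V$, with $\zz=0$. This is exactly the first alternative in the conclusion, so nothing more is needed in this case; in particular no restriction on $V$ arises.

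Second, suppose the structure on $\gg$ is of Belavin--Drinfeld type. Then Theorem \ref{th:main} applies directly: under the hypothesis that $V$ is a simple $\gg$-module, the existence of a central extension $\gg'=\gg\oplus\zz$ with $\gg'\ltimes V$ admitting a semidirect Lie bialgebra structure is equivalent to $(\gg,V)$ being one of the pairs (i)--(iv) listed there. Since the Corollary's hypothesis is precisely that $\gg\ltimes V$ (and hence, a fortiori, some $\gg'\ltimes V$) carries a semidirect Lie bialgebra structure, we conclude $(\gg,V)$ is one of those pairs. Combining the two cases gives the stated disjunction, and the Corollary is proved.

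I do not anticipate a real obstacle here, since both inputs are already in hand; the only point requiring a little care is the logical bookkeeping of the ``either/or'' — specifically, noting that in the triangular case the hypothesis of the Corollary is automatically met for all $V$ (so the first branch of the conclusion is the operative one), whereas in the Belavin--Drinfeld case the hypothesis is genuinely restrictive and pins down $(\gg,V)$ via Theorem \ref{th:main}. One should also observe that passing from ``$\gg\ltimes V$ is a semidirect Lie bialgebra'' to ``$\gg'\ltimes V$ is one for some central extension $\gg'$'' is trivial (take $\zz=0$), so the hypothesis of the Corollary indeed feeds into condition (a) of Theorem \ref{th:main}.
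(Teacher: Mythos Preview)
Your proposal is correct and follows essentially the same approach as the paper: the paper states the Corollary as a ``direct consequence'' of Theorem \ref{th:main} and Proposition \ref{pr:inh-triangular} together with the triangular/Belavin--Drinfeld dichotomy for quasitriangular structures on simple Lie algebras, and you have simply spelled out that two-case argument. The only minor remark is that in the triangular case the first disjunct of the conclusion holds tautologically once you know the structure is triangular, so Proposition \ref{pr:inh-triangular} is really supplying the converse content (that no constraint on $V$ can arise because every $V$ works); your bookkeeping paragraph already notes this.
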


\subsection{Semidirect Lie bialgebras and the standard structure}
In this section we will give a direct proof of Theorem \ref{th:main} for the case of the standard Lie bialgebra structure on a simple Lie algebra $\gg$.  It shows which role the doubles $D(\gg)$ and $D(\gg\ltimes V)$ play in the classification of semidirect Lie bialgebras.  By linking the proof in detail to steps in the proof of Theorem 1.1 of the author's \cite{ZW}, we shed some light on the interplay between the co-Poisson geometry in the present paper and the $r$-matrix Poisson structures discussed in \cite{ZW}.  The standard $r$-matrix, corresponding to the Belavin Drinfeld triple $(\emptyset, \emptyset, \tau)$ is given by

 $$r=\sum_{\alpha\in R^+(\gg)} \frac{<H_\alpha,H_\alpha>}{2} E_\alpha\otimes F_\alpha+ r_0 .$$

  The standard Lie bialgebra structure on a complex simple Lie algebra is defined as $\delta(x)=[r,x\otimes 1+1\otimes x]$. The following result is our main theorem.

  \begin{theorem}
 \label{th:main1}
  Let $\gg$ be a complex simple Lie algebra  with standard Lie bialgebra structure and let  $V$  be a simple $\gg$-module. Then the following are equivalent:

  \noindent (a) The pair $(\gg,V)$ admits a semidirect  Lie bialgebra.

  \noindent(b) The module $V$ is geometrically decomposable in the sense of Howe (\cite{Ho}); i.e. it corresponds to an Abelian radical.

  \noindent(c) The pair $(\gg,V)$ is one of the following:

    (i) $(sl_n(\CC),V_\lambda)$ where $\lambda\in\{\omega_1,2\omega_1,\omega_2,\omega_{n-2},\omega_{n-1}, 2\omega_{n-1}\}$.

 (ii) $(so(n),V_{\omega_1})$,$(so(5),V_{\omega_2})$, $(so(8),V_{\omega_i})$, $i=3,4$ and     $ (so(10),V_{\omega_j})$, $j=4,5$.

 (iii) $(sp(4),V_{\omega_2})$.

(iv) $(E_6,V_{\omega_1})$ and $(E_6, V_{\omega_6})$.
  \end{theorem}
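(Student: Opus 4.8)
The plan is to deduce Theorem \ref{th:main1} as a special case of Theorem \ref{th:main}, since the standard Lie bialgebra structure is exactly the Belavin--Drinfeld structure attached to the trivial triple $(\emptyset,\emptyset,\tau)$; all that remains is to trace through the general argument and observe that no simplification or special feature of the standard case alters the outcome. Concretely, I would first invoke Theorem \ref{th:sdp-poisson}: if $(\gg,V)$ admits a semidirect Lie bialgebra then $r^- = \frac12(r - r^{op})$ defines a Poisson bracket on $S(V)$, equivalently $[[r^-,r^-]](\Lambda^3 V) = 0$ by Lemma \ref{le:r- to casimir}. For the standard $r$-matrix this is a concrete skew endomorphism of $V\otimes V$ built from the off-diagonal part $\sum_{\alpha\in R^+}\frac{\langle H_\alpha,H_\alpha\rangle}{2}(E_\alpha\otimes F_\alpha - F_\alpha\otimes E_\alpha)$ acting on $V$, and Theorem 1.1 of \cite{ZW} classifies exactly the simple modules for which such a bracket exists. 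This yields the list in (c), up to the natural $sp(2n)$-representation $V_{\omega_1}$, which must then be excluded by the argument of Proposition \ref{pr: sp 2n} (this works verbatim for the standard structure, being itself a Belavin--Drinfeld structure). This gives (a)$\Rightarrow$(c).

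For the converse (c)$\Rightarrow$(a), I would use Proposition \ref{pr:abelian rads-lie bi}: each pair $(\gg,V)$ in the list arises as $(\ll',\nn)$ for a parabolic $\pp = \ll\ltimes\nn \subset \tilde\gg$ with Abelian radical $\nn$ inside some ambient simple Lie algebra $\tilde\gg$, where $\ll = \ll'\oplus\zz$. Since $\gg$ carries its standard (hence Belavin--Drinfeld) structure, and the Dynkin diagram embedding $i\colon D_{\ll'}\hookrightarrow D_{\tilde\gg}$ carries the trivial Belavin--Drinfeld triple on $\ll'$ to a Belavin--Drinfeld triple on $\tilde\gg$ --- indeed to a \emph{sub-triple} of the standard (trivial) triple on $\tilde\gg$, so the ambient structure can itself be taken standard --- the restriction of the Lie bialgebra structure on $\tilde\gg$ to $\pp$ exhibits $(\ll'\oplus\zz)\ltimes\nn \cong \gg'\ltimes V$ as a semidirect Lie bialgebra with $\gg'=\gg\oplus\zz$. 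The equivalence (b)$\Leftrightarrow$(c) is Howe's classification of geometrically decomposable modules / Abelian radicals, cited as \cite[ch.~5.5]{Ho}, and is independent of the Lie bialgebra structure.

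The point of giving a \emph{separate} proof for the standard case, as the section heading promises, is to make the role of the doubles $D(\gg)$ and $D(\gg\ltimes V)$ explicit. For the standard structure one has the concrete Manin triple realization $D(\gg) \cong \gg\oplus\gg$ with $\gg_+$ identified with a Borel, so that the image $\gg_+ = r_+(\gg^*)$ in the proof of Theorem \ref{th:sdp-poisson} is $\bb_+$, and the canonical element $c_+$ becomes the full Casimir $c$ of $\gg$; the computation $[c_{12},c_{23}](\Lambda^3 V)=0$ then reduces, by Lemma \ref{le:r- to casimir}, precisely to the vanishing condition analyzed in \cite{ZW} via the structure of $D(\gg\ltimes V)$ as studied by Hodges and Yakimov \cite{HY,HY1}. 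I would spell out this dictionary step by step, matching each stage to the corresponding step of the proof of Theorem 1.1 in \cite{ZW}, so that the co-Poisson picture here and the $r$-matrix Poisson picture there are visibly the same computation.

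The main obstacle I anticipate is not in the logical skeleton --- which is essentially a restriction argument plus a citation --- but in verifying cleanly that the Belavin--Drinfeld triple $i(\emptyset,\emptyset,\tau)$ induced on the ambient $\tilde\gg$ by the Dynkin embedding is again the \emph{standard} (trivial-triple) structure, rather than merely \emph{some} Belavin--Drinfeld structure; this is what makes the statement genuinely about the standard structure and not just a corollary of Theorem \ref{th:main}. Since both $\Delta_1$ and $\Delta_2$ are empty for $\ll'$, their images under $i$ are empty in $\tilde\gg$, so $i(\emptyset,\emptyset,\tau) = (\emptyset,\emptyset,\tilde\tau)$ is indeed the trivial triple on $\tilde\gg$ --- the only subtlety being to check the diagonal compatibility conditions on $r_0$ match up under the embedding of Cartan subalgebras, which is a routine linear-algebra verification I would not belabor.
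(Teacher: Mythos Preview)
Your proposal is logically correct --- Theorem \ref{th:main1} is indeed the specialization of Theorem \ref{th:main} to the trivial Belavin--Drinfeld triple, and your argument (Theorem \ref{th:sdp-poisson} $+$ \cite[Theorem 1.1]{ZW} $+$ Proposition \ref{pr: sp 2n} for (a)$\Rightarrow$(c); Proposition \ref{pr:abelian rads-lie bi} for the converse; Howe for (b)$\Leftrightarrow$(c)) reproduces that proof verbatim.

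However, this is \emph{not} the route the paper takes, and the divergence is the whole point of the section. The paper gives an independent, direct proof that does not pass through Theorem \ref{th:sdp-poisson} or cite \cite[Theorem 1.1]{ZW}. Instead it works inside the double $D(\gg\ltimes V)$: using the Hodges--Yakimov description $D(\gg)\cong\gg\oplus\gg$ it shows (Proposition \ref{pr:double-cyclic}) that $V^*$ is a cyclic $\gg^*$-module generated by an extremal weight vector, establishes that the $\hh$- and $\hh^*$-weight gradings on $V$ are compatible, and then derives a concrete weight constraint (Lemma \ref{le:nottoobig}, Proposition \ref{pr:not too big}): if $V_\lambda$ is self-dual then $2\lambda$ or $2\lambda-\alpha_i$ must be a root. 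This constraint, combined with restriction to Levi subbialgebras (Proposition \ref{pr:Levi}) and an \textit{ad hoc} treatment of $so(7)$, $G_2$, and $sp(2n)$, yields the list (c) from scratch. What this buys is exactly what you promise but do not deliver in your last paragraph: it makes the mechanism of the classification visible in terms of the $\gg^*$-module structure of $V$ inside the double, rather than importing it as a black box from \cite{ZW}. Your ``dictionary'' remains a sketch; the paper actually writes it out.
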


\begin{proof}
As a first step towards proving Theorem \ref{th:main1} have to understand the representation theory of the Lie algebras $\gg^*$  and $D(\gg)$ defined by a Belavin-Drinfeld triple.

 \begin{proposition}
 \label{pr:double-cyclic}
 Let $\gg$ be a simple Lie algebra and let $(\Gamma_1,\Gamma_2,\tau)$ be a Belavin-Drinfeld triple. If $V$ is a simple $\gg$-module then $V^*$ is graded into weight spaces by the action of $\hh^*$, and  $V^*$ is a cyclic module with the module structure induced from the action of $\nn^+$ or $\nn^-$ on the highest or lowest weight vectors.
 \end{proposition}

 \begin{proof}
We will prove the proposition by investigating the action of the double $D(\gg)$ on $V$ and $V^*$. The following fact is obvious.
\begin{lemma}
The double $D(\gg)$ acts naturally on $V$ and $V^*$ by suitable restricting the adjoint action of $D(\gg\ltimes V)$ on itself.
\end{lemma}

 We will next use the following result of Hodges and Yakimov \cite{HY}.
 \begin{lemma}
 \noindent (a)\cite[ Corollary 7.1]{HY} The double $D(\gg)$ is isomorphic to $\gg\oplus\gg$ as a Lie algebra.

 \noindent (b) The Lie algebra $\gg^*$ embeds $i:\gg^*\hookrightarrow \gg\oplus \gg$ such that $\nn+\oplus \nn^-\subset \gg\oplus\gg$ is contained in the image of $i$. The Lie algebra $\gg$ embeds into $\gg\oplus \gg$ via the diagonal map.

 \end{lemma}

 \begin{proof}
 Part (b) of the lemma follows directly from \cite[Corollary 7.1]{HY}  and the definition of the subalgebra $\bb$ in Corollary 7.1 which is given in Section 5 of the paper \cite{HY}.
 \end{proof}

 The previous lemma implies that if $V$ is a simple $\gg$-module then it must be isomorphic to $V\otimes V_0$ or  $V_0\otimes V$ as a $\gg\oplus\gg$-module, where $V_0$ denotes the trivial $\gg$-module. Here we assume that the first copy of $\gg$ acts on the first tensor factor while the second copy acts on the second factor. Similarly, $V^*$ will be isomorphic to  $V^*\otimes V_0$ or  $V_0\otimes V^*$. This directly implies that $V^*$ is a cyclic $(\nn^+,\nn^-)$-module generated by the lowest weight vectors, in the case   $V\otimes V_0$, or the highest weight vectors, in the case $V_0\otimes V$. The
 proposition is proved.
 \end{proof}

Now we will discuss some facts about the Lie algebra structure on $\gg^*$.
 Denote  by $\nn^*_{\pm}$ the dual of $\nn^{\pm}$ and by $\hh^*$ the dual of $\hh$.
 \begin{lemma}
 The Lie algebra $\gg^*$ has a triangular decomposition $\gg^*=\nn^*_+\oplus\hh^*\oplus\nn^*_-$,
where $\nn^*_{\pm}$ are nilpotent Lie algebras and $\hh^*$ is a maximal commutative subalgebra.
 \end{lemma}
 \begin{proof}
A proof of this fact can be found for example in Yakimov's paper \cite[ch. 3.1]{Y}
 \end{proof}

 Now we are ready to consider finite dimensional representations of $\gg^*$. The following fact is obvious.

 \begin{lemma}
 Let $V^*$ be a finite dimensional $\gg^*$-module. Then $\gg^*$ is naturally graded into eigenspaces for $\hh^*$, the weightspaces.
 \end{lemma}
Now choose a basis $e_\alpha, f_\alpha, h_i$  for $\alpha\in R^+(\gg)$  dual to the standard basis.   Note that if $\gg\ltimes V$ is a semidirect Lie bialgebra, then $V$ becomes a $\gg^*$ module in the double $D(\gg\ltimes V)$. We have the following fact.

 \begin{lemma}
 Let $\gg$ be a quasitriangular simple Lie bialgebra and let $\gg\ltimes V$ be a semidirect Lie bialgebra  consider the standard embedding  of $V \subset D(\gg\ltimes V)$. Then we can find a bigrading of $V\subset D(\gg\ltimes V)$  in $\gg^*$ and $\gg$-weight spaces.
 \end{lemma}

 \begin{proof}
 Suppose that $v\in V$ is an $\gg$-weight vector of weight $\lambda$. Note that $[h,H]=0\in D(\gg)$ for all $h\in\hh^*$ and $H\in\hh$. Since $\hh$ and $\hh^*$ are commutative one has for a weight $H_{\alpha}$ and $h\in \hh^*$:

 $$(\lambda,\alpha) [h,v]=[h,[H_{\alpha},v]]=[[h,H], v]+[H_\alpha, [h,v]]=0+[H_{\alpha},[h,v]$$

 Hence the action of $\hh^*$ preserves the $\gg$-weight spaces. We show analogously that $\hh$ preserves the $\hh^*$ weight spaces. The assertion of the lemma is now immediate.
 \end{proof}

  Now we  prove that simple modules not in the list (c) do not admit semidirect Lie bialgebra structures. The equivalence of (b) and (c) is shown by Howe in \cite[ch. 5.5]{Ho}.

 First note the following fact.

 \begin{lemma}
 Let $\gg$ be a simple Li algebra with standard Lie bialgebra structure. Then
 $[e_i, f_j]=0$ for all $i,j$ (including $i=j$).
 \end{lemma}

 \begin{proof}
 The assertion follows by straightforward calculation. Note that in particular $\delta(\hh)=0$, hence $[e_i,f_i]=0$.
 \end{proof}

  Now we have to consider whether the gradings are compatible with the action of the $e_i$ and $f_i$.

  \begin{lemma}
  \label{le:weight-differences}
  Let $\gg\ltimes V$ be a semidirect Lie bialgebra with $\gg$ simple with standard Lie bialgebra structure. Let $v^*\in (V(\lambda))^*$, the $\lambda\in P^+(\gg)$-weight space. Then,

  $e_i(v^*)\in (V(\lambda+\alpha_i))^*$ and $f_i(v^*)\in (V(\lambda-\alpha_i))^*$.
  \end{lemma}

 \begin{proof}
 Let $w\in V(\mu)$ and let
 $$\delta(w)= H'\wedge w+\sum_{\alpha\in R^+(\gg)} \left(E_{\alpha} \wedge w_{\alpha}+  F_{\alpha} \wedge w'_{\alpha} \right)\ .$$
Suppose that $H_\mu\in \hh$. Noting that $\delta(H_\mu)=0$ we obtain from the cocycle identity \eqref{eq:cocycle} that
 $(\lambda,\mu)\delta(w)=\delta(H_\mu.w)=[\Delta(H_\mu), \delta(w)]$,
  and hence that
  $$(\lambda, \mu) E_{\alpha} \wedge w_{\alpha}=[H_\mu, E_\alpha] \wedge w_\alpha+E_\alpha \wedge [H_\mu, w_\alpha]\ .$$
  The assertion now follows immediately for $e_i$ and by a similar argument also for $f_i$.
  The lemma is proved.
 \end{proof}

 The following proposition will be a main tool for proving Theorem \ref{th:main1}.

 \begin{proposition}
 \label{pr:not too big}
   Let $\gg\ltimes V$ be a  semidirect Lie bialgebra with $\gg$ simple with standard Lie bialgebra structure.  If $V=V_\lambda$ is a simple selfdual $\gg$-module, then either $2\lambda\in R^+(\gg)$  or $2\lambda-\alpha_i\in R^+(\gg)$.
 \end{proposition}

 \begin{proof}
 We will prove the following more general, but more technical fact which includes the assertion of the proposition as a special case. This lemma will be useful to prove the classification result in the case of   modules which are not selfdual.

 \begin{lemma}
\label{le:nottoobig}
    Let $\gg\ltimes V$ be a semidirect  Lie bialgebra with $\gg$ simple with standard Lie bialgebra structure. Suppose that $V=V_\lambda$ is simple.  Let $\mu\in P(\gg)$ such that there exists a simple root  $\alpha_i\in R(\gg)$ such that $(\lambda,\alpha_i)>0$ and $(\mu,\alpha_i)<0$.   Then, $\lambda-\mu\in R^+(\gg)$  or $\lambda-\mu+\alpha_i\in R^+(\gg)$ .
 \end{lemma}

 \begin{proof}
 Since $V^*_\lambda$ is cyclic as a $\gg^*$-module (see Proposition \ref{pr:double-cyclic}) we may assume without loss of generality that $V_\lambda^*$ is generated by $v_\lambda^*$, the dual of a highest weight vector and that $V_\lambda^*=\nn^*_+(V_\lambda^*)$, since $[e_i,f_i]=0$ for all simple roots $\alpha_i\in R^+(\gg)$.  We obtain, using Lemma \ref{le:weight-differences} that $\delta(v_\lambda)=H\wedge v_\lambda$ and $\delta(v_{\mu})=\sum_{\alpha\in R^+(\gg)} F_\alpha\wedge v_{\alpha'}+H'\wedge v_{\mu}$, where $v_{\mu}\in V(\mu)$  and $v_{\alpha'}\in V(\alpha')$, where $\alpha'=\mu+\alpha$  and $H,H'\in \hh$. We now compute

 $$0=\delta([v_\lambda,v_{\mu}])=[H\wedge v_\lambda, \Delta(v_\mu]+[\Delta(v_\lambda),
\sum_{\alpha\in R^+(\gg)} F_\alpha\wedge v_{\alpha'}+H'\wedge v_{-\lambda}]\ .$$
$$= [H, v_{\mu}] \wedge v_\lambda+ [H', v_{\lambda}]\wedge v_{\mu} +\sum_{\alpha\in R^+(\gg)}[v_\lambda, F_\alpha]\wedge v_{\alpha'} \ .$$
Since $[F_{\alpha_i}, v_\lambda]\ne 0$ for some $\alpha_i\in \Delta$ , we obtain by comparing coefficients for the terms $[v_\lambda, F_{\alpha}]\wedge v_{\alpha}$ that $\lambda-\alpha_i=\mu+\alpha$ for some $\alpha\in R^+(\gg)\cup \{0\}$, as asserted. The lemma is proved.
\end{proof}

The proposition is proved.
 \end{proof}

 A Lie subbialgebra $\gg'$ of a Lie bialgebra $\gg$ is a Lie subalgebra of $\gg$ such that the restriction of the coalgebra structure defines a Lie algebra structure on $(\gg')^*$. We have the following fact.
\begin{proposition}
\label{pr:Levi}
\noindent(a) Let $(\gg,[\cdot,\cdot],\delta)$ be a Lie bialgebra with invariant quadratic form $\langle\cdot,\cdot\rangle$ and let $\gg'\subset \gg$ be a semisimple Lie subbialgebra of such that $\gg=\gg'\oplus (\gg')^\perp$, where $(\gg')^\perp$ denotes the orthogonal complement of $\gg'$ under  $\langle\cdot,\cdot\rangle$. If $\gg\ltimes V$ is a semidirect  Lie bialgebra  for some $\gg$-module $V$, then $\gg'\ltimes V$ also defines a semidirect  Lie bialgebra  with action given by the restriction of bracket $[\cdot,\cdot]'$ and cobracket $\delta'$ from $\gg\otimes V$ to $\gg'\otimes V$.

\noindent(b) Let $\gg$ be a semisimple complex Lie algebra and let $\ll$ be a Levi subbialgebra of $\gg$ with $\ll\cong \ll'\oplus\zz$, where $\ll'$ is semisimple and $\zz$ is central. Let $V$ be a $\gg$-module. If $\gg\ltimes V$ admits a semidirect  Lie bialgebra associated to the standard Lie bialgebra structure, then so does the restriction of the Lie algebra to $\ll'\ltimes V$.
\end{proposition}

\begin{proof}
Prove (a) first. The restriction of the bracket clearly defines a Lie bracket  $[\cdot,\cdot]'$ on $\gg'\otimes V$.
We have by definition $\delta(x)=\delta'(x)\in\gg'\wedge\gg'$ for all $x\in\gg'$ and we can write for all $v\in V$,  $\delta(v)=\sum x_i\wedge v_i+\sum x_j^\perp\wedge v_j$, where $x_i\in\gg'$ and $x_j^\perp\in(\gg')^\perp$. It is easy to see that $\delta'(v)= \sum x_i\wedge v_i$ defines a cobracket on $\gg'\ltimes V$ and that $[\cdot,\cdot]'$ and $\delta'$ satisfy \eqref{eq:cocycle}. Part (a) is proved.

Prove (b) next. The Lie subbialgebra $\ll'$ and the standard inner product clearly satisfy the conditions on $\gg'$ and  $\langle\cdot,\cdot\rangle$ in part (a). We, therefore, obtain that $\ll\ltimes V$ admits a semidirect  Lie bialgebra  and since $V$ is semisimple as an $\ll'$-module we have that $V= V'\oplus V''$ as $\ll'$-modules.  Part(b) and the proposition are proved.
\end{proof}

In the light of Proposition \ref{pr:not too big} we need the following result

\begin{proposition}
Let  $\gg$ be a simple complex Lie algebra, $\gg\notin\{sl_n,E_6\}$ for $n\ge 3$ and let $\lambda\in P^+(\gg)$. If $2\lambda-\alpha_i\in R(\gg)$ for some simple root $\alpha_i$, then $\lambda$ is one of the following:

\begin{enumerate}

\item If $\gg=sl_2$, then $\lambda=\{\omega_1, 2\omega_1\}$.
 \item  If $\gg=so(n)$ then $\lambda=\omega_1$ or if  $n=5$ and $\lambda=\omega_2$, $n=7$  and $ \lambda=\omega_3$, $n=8$ and $\lambda=\{\omega_3,\omega_4\}$or $n=10$ and $ \lambda\in\{\omega_4,\omega_5\}$.

\item If $\gg=(sp(2n))$ then $\lambda=\omega_1$ or $n=2$ and $\lambda=\omega_2$.

\item If $\gg=G_2$, then $\lambda=\omega_1$.
\end{enumerate}
\end{proposition}

\begin{proof}
The proposition was proved in detail in \cite[ch.6.1]{ZW}.
\end{proof}

On the other hand, in the case of $\gg\in\{sl_n, E_6\}$ we  obtain the following fact.
\begin{lemma}
\noindent(a)  Let $\gg=sl_n$. Then $\lambda\in\{\omega_1,2\omega_1,\omega_2,\omega_{n-2},\omega_{n-1}, 2\omega_{n-1}\}$.

\noindent (b) Let $\gg=E_6$. Then  $\lambda\in\{\omega_1,\omega_6\}$.
\end{lemma}

 \begin{proof}
 Again analogous to the argument in \cite[ch.6.1]{ZW} we consider certain Levi subalgebras.  Applying Proposition \ref{pr:Levi} and Lemma \ref{le:nottoobig} we obtain the desired result.
 \end{proof}

 We will conclude the proof with the three remaining cases, the third fundamental module of $so(7)$, the first fundamental module of $G_2$ and, most interestingly, the first fundamental module of $sp(2n)$.

 \begin{lemma}
 Let $\gg=so(7)$ and $V=V_{\omega_3}$. Then $\gg\ltimes V$ does not admit a semidirect Lie bialgebra structure corresponding to the standard Lie bialgebra structure on $so(7)$.
 \end{lemma}

 \begin{proof}
 Suppose there was. Note that $\omega_3=\frac{1}{2}(\alpha_1+2\alpha_2+3\alpha_3)$ and $\alpha_{max}=\alpha_1+2\alpha_2+2\alpha_3$, the highest root.  Let $v_{\omega_3}\in V(\omega_3)$ and $v_{-\omega_3}\in V(-\omega_3)$. Clearly, $F_\alpha(v_{\omega_3})\notin V(-\omega_3)$ and $E_\alpha(v_{-\omega_3})\notin V(\omega_3)$ for all $\alpha\in R(\gg)$. As in the proof of Lemma \ref{le:nottoobig}  we may assume that $\delta(v_{\omega_3})=h\wedge v_{\omega_3}$ and $\delta(v_{-\omega_3})=h'\wedge v_{-\omega_3} +\sum_{\alpha\in R^+(\gg)} F_\alpha\wedge v_{\alpha}$ where $v_\alpha\in V(-\omega_3+\alpha)$ and $h,h'\in \hh$.
 Therefore $0=\delta([v_{\omega_3}, v_{-\omega_3}])$  implies
$$ [h\wedge v_{\omega_3}, \Delta(v_{-\omega_3})]+[\Delta(v_{\omega_3}),h'\wedge v_{-\omega_3}]= (h,-\omega_3) v_{-\omega_3}\wedge  v_{\omega_3}+ (h',\omega_3) v_{-\omega_3}\wedge v_{\omega_3}\ .$$

 We thus obtain that $(h,\omega_3)=(h',\omega_3)$, even though  Lemma \ref{le:weight-differences} implies that $h-h'=2\omega_3$, which leads to a contradiction.
 The lemma is proved.
 \end{proof}

The case of the first fundamental module of $G_2$ can be proved by a similar argument.

Now we shall consider the case of the first fundamental module of $sp(2n)$.

  \begin{proposition}
  \label{pr: sp 2n}
  Let $\gg=sp(2n)$  with Belavin-Drinfeld Lie bialgebra structure and $V=V_{\omega_1}$. Then $\gg\ltimes V$ does not admit a  semidirect  Lie bialgebra structure.
  \end{proposition}
\begin{proof}
 Indeed consider the Lie algebra $\gg'=sp(2n+2)$ and the parabolic subalgebra corresponding to removing the first node of the Dynkin diagram. The semisimple part of the Levi is isomorphic to $\gg=sp(2n)$ and the radical is isomorphic as a $\gg$-module to $\nn=V_{\omega_1}\oplus V_0$, $V_0$ corresponding to the maximal root space of $sp(2n+2)$ We extend the Lie bialgebra structure from $sp(2n)$  to $sp(2n)\ltimes \nn$, as in the proof of Proposition \ref{pr:abelian rads-lie bi}.  Consider the action of the double $D(sp(2n))$ on $\nn$. We obtain that (up to a permutation of factors) $V_{\omega_1}\oplus V_0\cong V_{\omega_1}\otimes V_0\oplus V_0\otimes V_0$  as a $D(sp(2n))$-module. The action of $D(sp(2n))$ determines
 $$[\delta(v),v'\otimes 1+1\otimes v']+[v\otimes 1+1\otimes v,\delta(v')]\ ,$$
 for all $v,v'\in V_{\omega_1}\otimes V_0$.  The radical $\nn$, however, is not Abelian and  for $v\in V_{\omega_1}(\omega_1)$ and $v'\in  V_{\omega_1}(-\omega_1)$ one has $[v,v']\in V_0\ne 0$ and $\delta([v,v'])\ne 0$. Hence, if $V_{\omega_1}\otimes V_0$ is assumed to an Abelian subalgebra of $D(sp(2n))\ltimes (V_{\omega_1}\otimes V_0)$, then the cocycle identity \eqref{eq:cocycle} cannot be satisfied for all $v,v'\in V_{\omega_1}\otimes V_0$. Therefore, there is no  semidirect Lie bialgebra structure on $sp(2n)\ltimes V_{\omega_1}$.
\end{proof}

 It remains to show that there exist associated Lie bialgebra structures for the modules listed in Theorem \ref{th:main1}.  This was accomplished in the proof of Theorem \ref{th:main} by explicitly describing them using Proposition \ref{pr:abelian rads-lie bi}. We can now conclude the proof as in the proof of Theorem \ref{th:main}.
 Theorem \ref{th:main1} is proved.
 \end{proof}

\section{Co-Poisson  Module Algebras and  Locally $ad$-finite Subalgebras of $U_q(g)$}
\label{se:co-dec-locadfin}
This section is devoted to the construction of another interesting class of  co-Poisson Hopf algebras which arise of the classical limits of $U_q(\gg)\ltimes  A$ where $U_q(\gg)$ denote the standard quantized universal enveloping algebra of a semisimple Lie algebra $\gg$ and $A\subset U_q(\gg)$ is a filtered $U_q(\gg)$-module subalgebra such that each filtered component is a finite-dimensional $U_q(\gg)$-module. We, then, show in Section \ref{se:qu-lin-qsas} that we obtain through our construction a large family of interesting solutions to Problem \ref{pr:equiv-defs}, and provide explicit calculations for an example in Section \ref{se:the natural representation}.
 \subsection{The finite part of $U_q(\gg)$ }

 We construct in this section finitely graded $U_q(\gg)$-module algebras using the description of the locally finite part of the quantized enveloping algebra $U_q(\gg)$ (see Section \ref{se:appendix2}) by Joseph and Letzter in \cite{JL1} and \cite{JL}. Moreover, we will show how they can be interpreted as quantizations of  co-Poisson Hopf algebras.

   Let $\gg$ be a complex simple Lie algebra and $V$ a $k$-dimensional $U_q(\gg)$-module. Recall that if $V$ is  a $U_q(\gg)$-module with $\rho:U_q(\gg)\to End_k(V_\lambda)$, then $U_q(\gg)$ acts on $E_\lambda=End_k(V_\lambda)$  by conjugation $x(\phi)=\rho(x_{(1)})\phi \rho(S(x_{(2)})$.
  Recall the following result of Joseph and Letzter describing the integrable -- locally finite under the adjoint action-- subspace
 $$\mathfrak{F}=\{x\in U_q(\gg): ad (U_q(\gg))(x) \text{ is finite dimensional}\}\subset U_q(\gg)\ .$$

    \begin{theorem}(Joseph and Letzter \cite{JL})
 Let  $\lambda\in P^+(\gg)$ be a dominant integral weight of $\gg$. Then there exists an injective $U_q(\gg)$-module homomorphism $i: E_\lambda\to  U_q(\gg)$. Moreover
 $$\mathfrak{F}\cong \sum_{\lambda\in P^+(\gg)} E_\lambda\ .$$
 \end{theorem}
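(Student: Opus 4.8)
The plan is to follow Joseph and Letzter \cite{JL}, realising $\mathfrak{F}$ as the image of the quantized coordinate ring $\OO_q(G)$ under the ``Drinfeld'' (quantum coadjoint) map. First I would record the two soft structural facts. Since $q$ is generic, the category of finite-dimensional type-$1$ $U_q(\gg)$-modules is semisimple; hence $\mathfrak{F}$, being by construction the sum of all finite-dimensional $ad$-submodules of $U_q(\gg)$, is a direct sum of simple modules $V_\mu$, $\mu\in P^+(\gg)$. Moreover $\mathfrak{F}$ is both an $ad$-submodule and a subalgebra, because the multiplication $U_q(\gg)\otimes U_q(\gg)\to U_q(\gg)$ is $ad$-equivariant, so a product of two $ad$-finite elements spans a quotient of the ($ad$-finite) tensor product of their $ad$-orbits. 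The task therefore splits into (i) producing, for each $\lambda\in P^+(\gg)$, an injective $ad$-equivariant map $i_\lambda\colon E_\lambda\to U_q(\gg)$ with finite-dimensional image, and (ii) showing $\mathfrak{F}=\sum_{\lambda\in P^+(\gg)}i_\lambda(E_\lambda)$ with the sum direct.

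For (i): as an $ad$-module (conjugation action) $E_\lambda=\mathrm{End}_k(V_\lambda)\cong V_\lambda\otimes V_\lambda^*$. I would build $i_\lambda$ from the universal $R$-matrix $\RR$ of $U_q(\gg)$ by evaluating the balanced element $\RR_{21}\RR$ in the first tensor slot through $\rho_\lambda$, obtaining a matrix $\ell_\lambda=(\rho_\lambda\otimes\mathrm{id})(\RR_{21}\RR)\in\mathrm{End}_k(V_\lambda)\otimes U_q(\gg)$ whose entries lie in $U_q(\gg)$; the span of those entries is a finite-dimensional subspace on which the $ad$-action is intertwined with conjugation on $\mathrm{End}_k(V_\lambda)$, giving the $U_q(\gg)$-module map $i_\lambda$. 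Equivalently one restricts the Drinfeld map $\OO_q(G)\to U_q(\gg)$, $f\mapsto(f\otimes\mathrm{id})(\RR_{21}\RR)$, to the Peter--Weyl block $C(\lambda)\cong E_\lambda$. Finiteness of the image is immediate since $\ell_\lambda$ has finitely many entries; injectivity follows because the evaluation pairing between $U_q(\gg)$ and $\OO_q(G)$ is non-degenerate on the $\lambda$-isotypic component (equivalently, $\rho_\mu(i_\lambda(E_\lambda))$ reproduces a nonzero braiding operator for a suitable $\mu$).

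For (ii): that the sum is direct I would obtain by a leading-term argument, filtering $U_q(\gg)$ compatibly so that distinct $\lambda$ produce independent top terms; alternatively one uses that the Casimir-type central elements $z_\lambda:=(\mathrm{tr}_{q,\lambda}\otimes\mathrm{id})(\ell_\lambda)$ separate the blocks, or the non-degeneracy of the Rosso form on each $i_\lambda(E_\lambda)$. The inclusion $\sum_\lambda i_\lambda(E_\lambda)\subseteq\mathfrak{F}$ is clear from (i). The hard part — and the place where the quantum situation genuinely differs from the classical Kostant theorem, where $\mathfrak{F}=U(\gg)$ — is the reverse inclusion $\mathfrak{F}\subseteq\sum_\lambda i_\lambda(E_\lambda)$: one must show there are no $ad$-highest-weight vectors beyond those already produced. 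Here I would use the triangular decomposition $U_q(\gg)=U^-U^0U^+$: expand an $ad$-finite element in a PBW basis, project onto the Cartan part $U^0$, and argue that the $U^0$-component of an $ad$-highest-weight vector is forced, by the relations together with the presence of $K_\mu$ for \emph{all} $\mu$ in the weight lattice $P(\gg)$ (this is exactly what makes every $\lambda\in P^+(\gg)$, not merely those in the root lattice, contribute), to be supported on a single Cartan element $K_{-2\lambda}$ with $\lambda\in P^+(\gg)$; hence such a vector lies in $i_\lambda(E_\lambda)$ modulo lower terms, and an induction on the weight ordering closes the loop.

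Combining (i) and (ii) gives the desired injection $i=i_\lambda$ for each $\lambda\in P^+(\gg)$ and the isomorphism $\mathfrak{F}\cong\sum_{\lambda\in P^+(\gg)}E_\lambda$. I expect step (ii) — the exhaustion step, i.e.\ the classification of $ad$-highest-weight vectors via the Harish-Chandra-type projection — to be by far the main obstacle; everything else is either formal or a direct computation with the coproduct, antipode and $R$-matrix.
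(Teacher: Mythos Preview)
The paper does not prove this theorem at all: it is stated as a result of Joseph and Letzter, attributed to \cite{JL}, and used as a black box. There is therefore no ``paper's own proof'' to compare your proposal against.

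That said, your outline is a reasonable sketch of the actual Joseph--Letzter argument (and its subsequent refinements via the Drinfeld map and the Rosso form). The construction of $i_\lambda$ via $(\rho_\lambda\otimes\mathrm{id})(\RR_{21}\RR)$ is essentially the approach taken in the literature, and you are right that the exhaustion step (ii) is the substantive part. One caution: your injectivity argument in (i) is a bit glib --- non-degeneracy of the Hopf pairing between $U_q(\gg)$ and $\OO_q(G)$ does not by itself give injectivity of $i_\lambda$, since the Drinfeld map is not the pairing map; one typically uses non-degeneracy of the Rosso form on each block, or the Harish-Chandra projection, to see this. Similarly, in (ii) the directness of the sum and the exhaustion are usually handled together by identifying $i_\lambda(E_\lambda)$ with $ad(U_q(\gg))(K_{-2\lambda})$ and showing that the $K_{-2\lambda}$ for $\lambda\in P^+(\gg)$ exhaust the $ad$-finite part of $U^0$; your PBW/leading-term sketch is pointing in the right direction but would need the explicit identification of the highest-weight vectors with the $K_{-2\lambda}$ to close.
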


 Of particular interest for our discussion will be a result by Lyubashenko and Sudbery \cite{Lyu-Sud}. We have $ E_\lambda\cong \hat L_\lambda\oplus k\cdot  Id_{V_\lambda}$, where $L_\lambda$ is a $(k^2-1)$-dimensional $U_q(\gg)$-module. Denote by $C_\lambda'=i(Id_{V_\lambda})$ and $L_\lambda=i( \hat L_\lambda)$.

 \begin{theorem}(Lyubashenko and Sudbery \cite{Lyu-Sud})
 \label{th:ad-finite-alg}
 Let  $\lambda\in P^+(\gg)$ be a dominant integral weight and $L_\lambda\subset U_q(\gg)$ the $U_q(\gg)$-module defined above.  Then there exist a linear map $\sigma:L_\lambda\otimes L_\lambda\to L_\lambda\otimes L_\lambda$ and a central element $C_\lambda=c\cdot C_\lambda' \in U_q(\gg)$, $c\in k$ such that for all $x,y\in L_\lambda$

 $$xy-yx-\mu\circ\sigma(x\otimes y)=  ad(x)(y)C_\lambda\ .$$
 \end{theorem}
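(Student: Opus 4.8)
The statement is essentially the quantum analog of the decomposition of the Lie algebra multiplication on $L_\lambda$ into a "quantum Lie bracket" $\sigma$ plus a piece landing in the center (generated by the Casimir $C_\lambda$). The natural route is to work inside $\mathfrak{F}\cong\bigoplus_\mu E_\mu$ and exploit that, as a $U_q(\gg)$-module, $E_\lambda\cong\hat L_\lambda\oplus k\cdot Id_{V_\lambda}$, so that $L_\lambda\otimes L_\lambda\subset \mathfrak{F}\otimes\mathfrak{F}$ multiplies into $\mathfrak{F}$, and every homogeneous summand $E_\mu$ occurring in the product decomposes again as $\hat L_\mu\oplus k\cdot Id_{V_\mu}$. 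First I would set up the relevant maps: the multiplication $m:\mathfrak{F}\otimes\mathfrak{F}\to\mathfrak{F}$ is a morphism of $U_q(\gg)$-modules (since $\mathfrak{F}$ is an $ad$-module algebra), so $m(L_\lambda\otimes L_\lambda)$ is a $U_q(\gg)$-submodule of $\mathfrak{F}$, hence a sum of copies of various $\hat L_\mu$'s together with copies of the trivial module (the lines $k\cdot Id_{V_\mu}=k\cdot C'_\mu$).

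\textbf{Key steps.} (1) Consider the antisymmetrized multiplication $b(x\otimes y)=xy-yx$ on $L_\lambda$. Although $b$ need not land back in $L_\lambda$, it is still a $U_q(\gg)$-module map $\Lambda^2_\sigma L_\lambda\to\mathfrak{F}$ in the appropriate braided sense; the point is to project its image onto the $L_\lambda$-isotypic component and onto the trivial-isotypic component. (2) Use the Joseph--Letzter multiplicity/structure results: the decomposition $E_\lambda=\hat L_\lambda\oplus k\,Id$ is the statement that $V_\lambda^*\otimes V_\lambda$ contains the adjoint-type module (which embeds to give $L_\lambda$) with a distinguished copy; I would argue that the only trivial summand of $L_\lambda\otimes L_\lambda$ relevant to $xy-yx$ is the one spanned by the image of $C'_\lambda$, i.e. that the trivial component of $m(L_\lambda\otimes L_\lambda)$ equals $k\cdot C'_\lambda$. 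This identifies the constant $c$ and the central element $C_\lambda=c\cdot C'_\lambda$. (3) Define $\sigma:L_\lambda\otimes L_\lambda\to L_\lambda\otimes L_\lambda$ so that $\mu\circ\sigma$ is exactly the $L_\lambda$-isotypic part of $b$, possibly composed with a section; concretely $\sigma$ is built from the (unique up to scalar) $U_q(\gg)$-projection onto the adjoint-type summand inside $L_\lambda\otimes L_\lambda$ followed by a coprojection, normalized so that $\mu\circ\sigma(x\otimes y)$ captures the non-central part of $xy-yx$. (4) It remains to check that the leftover piece is $ad(x)(y)\,C_\lambda$ rather than a more complicated central element; this is where one uses that the trivial isotypic component of $L_\lambda\otimes L_\lambda$ is one-dimensional and pairs with $x\otimes y$ via (a scalar multiple of) the invariant form, which is exactly $ad(x)(y)$ in Lyubashenko--Sudbery's normalization. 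Citing Theorem \ref{th:ad-finite-alg}'s source, the explicit value of $c$ is computed from the eigenvalue of the Casimir on $V_\lambda$, but I would not grind through that.

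\textbf{Main obstacle.} The delicate point is step (2)--(4): showing that the ``defect'' $xy-yx-\mu\circ\sigma(x\otimes y)$ is not merely central but is the specific rank-one expression $ad(x)(y)C_\lambda$. This requires knowing that the multiplication $m:L_\lambda\otimes L_\lambda\to\mathfrak{F}$ hits the trivial module in exactly one copy and that this copy sits inside $E_\lambda$ (not inside some other $E_\mu$), together with a precise comparison of two $U_q(\gg)$-invariant bilinear forms on $L_\lambda$: the one coming from the trace/quantum-dimension pairing implicit in $ad(x)(y)$, and the one coming from projecting the product onto $k\cdot C'_\lambda$. I expect these to agree up to the scalar $c$ by a Schur's-lemma argument (both are invariant, $L_\lambda$ being irreducible makes the invariant pairing unique up to scalar), so the whole statement reduces to a normalization computation which is precisely the content of \cite{Lyu-Sud}; hence I would simply invoke that reference for the existence and the value of $c$, having reduced the structural claim to the irreducibility of $L_\lambda$ and the Joseph--Letzter decomposition of $E_\lambda$.
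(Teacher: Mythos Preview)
Your approach differs substantially from the paper's and contains a real gap. The paper (following Lyubashenko--Sudbery) does not argue via isotypic decomposition inside $\mathfrak{F}$ at all. Instead it uses two purely Hopf-algebraic facts: first, $L_\lambda$ is a \emph{left coideal} in $U_q(\gg)$ whose coproduct has the specific shape $\Delta(x)=x\otimes C_\lambda+\sum u'\otimes x'$ with each $x'\in L'_\lambda$; second, the standard Hopf identity $xy=ad(x_{(1)})(y)\cdot x_{(2)}$. Substituting the first into the second isolates the term $ad(x)(y)\,C_\lambda$ immediately, and the remaining sum $\sum ad(u')(y)\cdot x'$ \emph{is} $\mu\circ\sigma(x\otimes y)$ by definition, with $\sigma$ given by the explicit formula \eqref{eq:def.of sigma}. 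No isotypic projection, Schur's-lemma argument, or multiplicity count enters: the map $\sigma$ is constructed directly from the Sweedler components of $\Delta(x)$, not characterized abstractly.

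The gap in your route is that steps (2) and (4) rest on the irreducibility of $L_\lambda$ (uniqueness of the invariant pairing, one-dimensionality of the trivial isotypic component of $L_\lambda\otimes L_\lambda$). But $L'_\lambda$ is simple only for $\gg=sl_n$, where it is the adjoint module; the paper says this explicitly just after Corollary~\ref{cor: sl_n-ad}. For any other $\gg$ or $\lambda$ your Schur argument collapses: there can be several invariant bilinear forms on $L_\lambda$ and several trivial summands in $L_\lambda\otimes L_\lambda$, so you have no mechanism to single out $ad(x)(y)$ as the coefficient of $C_\lambda$. Your step (3) is also incomplete on its own terms: projecting the commutator onto an isotypic piece of its image in $\mathfrak{F}$ yields an element of $U_q(\gg)$, not of $L_\lambda\otimes L_\lambda$, and there is no reason the image of $\mu\circ\sigma$ should sit inside $L_\lambda$. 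The coideal argument sidesteps all of this by reading $\sigma$ directly off $\Delta(x)$.
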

In the following we will sketch the proof of Lyubashenko and Sudbery.
 They first show that $L_\lambda\subset U_q(\gg)$ is  left coideal in $U_q(\gg)$, hence we conclude that the subalgebra $A_\lambda\subset U_q(\gg)$  generated by $L_\lambda$ is a left coideal algebra.  One can show in particular that there exists a central element $C_\lambda\in U_q(\gg)$ such that
  \begin{equation}
 \label{eq:co-product-Lyu}
 \Delta(x)= x\otimes C_\lambda+\sum u'\otimes x'\ ,
 \end{equation}
where $u\in U_q(\gg)$ and $x\in L'_\lambda$. Using the Hopf algebra identity
 $$xy= ad(x_{(1)})( y )\cdot x_{(2)}= x_{(1)} yS(x_{(2)}) x_{(3)}$$
 we can now define $\sigma$ using \eqref{eq:co-product-Lyu}

 \begin{equation}
 \label{eq:def.of sigma}
 \sigma(x\otimes y)= x_{(1)} yS(x_{(2)}) x_{(3)}-ad(x)(y)\otimes C_\lambda\ .
 \end{equation}

 The assertion of Theorem \ref{th:ad-finite-alg} follows.

 We have the following fact.

 \begin{theorem}
 Let $A_\lambda\subset U_q(\gg)$ be as above.
 There exists a $U(\gg)$ module bialgebra $\overline A_\lambda$ such that $ \overline A_\lambda\equiv A_\lambda (mod\ (q-1))$ with a co-Poisson co-decoration
 $\delta:\overline A_\lambda\to U(\gg)\wedge \overline A_\lambda$ satisfying
 $\delta(\overline a)\equiv\frac{ \Delta(a)-\Delta^{op}(a)}{q-1} (mod\ (q-1))$ if $\overline a\equiv a (mod\ (q-1))$.
 \end{theorem}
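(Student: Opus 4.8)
The plan is to realize $\overline A_\lambda$ as the classical limit of a suitable integral form of $A_\lambda$, and to deduce each required identity as the reduction modulo $(q-1)$ of the corresponding identity inside $U_q(\gg)$. Throughout I would work over the localization $\mathcal{A}$ of $k[q,q^{-1}]$ at $(q-1)$ (a discrete valuation ring, with fraction field $K$ and residue field $k$), write $\overline{(\,\cdot\,)}$ for reduction modulo $(q-1)$, and fix the standard $\mathcal{A}$-form $U_q^{\mathcal{A}}\subset U_q(\gg)$, for which $\overline{U_q^{\mathcal{A}}}=U(\gg)$ as a Hopf algebra, $\overline{\Delta}=\Delta_{U(\gg)}$ is cocommutative, and hence $\Delta(u)\equiv\Delta^{op}(u)\pmod{(q-1)}$ for every $u\in U_q^{\mathcal{A}}$. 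Recall also that $U(\gg)$ carries the co-Poisson cobracket $\delta_{U(\gg)}$ of Proposition~\ref{pr: bialg-co-Poisson}, with $\delta_{U(\gg)}(\overline u)=\overline{\frac{\Delta(u)-\Delta^{op}(u)}{q-1}}$.

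First I would build the integral form. Let $L_\lambda^{\mathcal{A}}:=L_\lambda\cap U_q^{\mathcal{A}}$, a full-rank, $\mathrm{ad}$-stable $\mathcal{A}$-lattice in $L_\lambda$; let $A_\lambda^{\mathcal{A}}$ be the $\mathcal{A}$-subalgebra of $U_q^{\mathcal{A}}$ generated by $L_\lambda^{\mathcal{A}}$ and by the central element $C_\lambda$ of Theorem~\ref{th:ad-finite-alg}, and replace it by its saturation in $U_q^{\mathcal{A}}$, which is still a subalgebra and still $\mathrm{ad}$-stable. Set $\overline A_\lambda:=A_\lambda^{\mathcal{A}}/(q-1)A_\lambda^{\mathcal{A}}$. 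Purity makes $U_q^{\mathcal{A}}/A_\lambda^{\mathcal{A}}$ torsion free, so $\overline A_\lambda$ embeds into $U(\gg)$; and $A_\lambda^{\mathcal{A}}\otimes_{\mathcal{A}}K\cong A_\lambda$, so $\overline A_\lambda\equiv A_\lambda\pmod{(q-1)}$. Since $A_\lambda$ is a left coideal subalgebra of $U_q(\gg)$, one has $\Delta(A_\lambda^{\mathcal{A}})\subset U_q\otimes A_\lambda$ over $K$; as $U_q^{\mathcal{A}}\otimes A_\lambda^{\mathcal{A}}$ is pure in $(U_q^{\mathcal{A}})^{\otimes2}$ (its cokernel $U_q^{\mathcal{A}}\otimes(U_q^{\mathcal{A}}/A_\lambda^{\mathcal{A}})$ is torsion free), intersecting gives $\Delta(A_\lambda^{\mathcal{A}})\subset U_q^{\mathcal{A}}\otimes A_\lambda^{\mathcal{A}}$, and reducing shows $\overline A_\lambda$ is a left coideal subalgebra of $U(\gg)$. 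Cocommutativity then forces $\Delta_{U(\gg)}(\overline A_\lambda)\subset(U(\gg)\otimes\overline A_\lambda)\cap(\overline A_\lambda\otimes U(\gg))=\overline A_\lambda\otimes\overline A_\lambda$, so $\overline A_\lambda$ is a cocommutative, $\mathrm{ad}$-stable sub-bialgebra of $U(\gg)$, hence a $U(\gg)$-module bialgebra. (In fact a Milnor--Moore argument gives $\overline A_\lambda=U(\gg)$, matching Donin's $A=U(sl_n)$.)

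Next I would produce the co-decoration. For $a\in A_\lambda^{\mathcal{A}}$ set $\delta_h(a):=\frac{\Delta(a)-\Delta^{op}(a)}{q-1}\in(U_q^{\mathcal{A}})^{\otimes2}$, which is legitimate since the numerator is divisible by $q-1$ and there is no $(q-1)$-torsion. Over $K$ one has $\delta_h(a)\in(U_q\otimes A_\lambda)+(A_\lambda\otimes U_q)$, and this submodule is pure in $(U_q^{\mathcal{A}})^{\otimes2}$ because its cokernel $(U_q^{\mathcal{A}}/A_\lambda^{\mathcal{A}})^{\otimes2}$, a tensor square of a torsion-free module over the PID $\mathcal{A}$, is torsion free. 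Hence $\delta_h(a)\in(U_q^{\mathcal{A}}\otimes A_\lambda^{\mathcal{A}})+(A_\lambda^{\mathcal{A}}\otimes U_q^{\mathcal{A}})$, so $\delta(\overline a):=\overline{\delta_h(a)}$ is well defined, manifestly skew, and lands in $(U(\gg)\otimes\overline A_\lambda)+(\overline A_\lambda\otimes U(\gg))$; that is, $\delta\colon\overline A_\lambda\to U(\gg)\wedge\overline A_\lambda$ with $\delta(\overline a)\equiv\frac{\Delta(a)-\Delta^{op}(a)}{q-1}\pmod{(q-1)}$, as required. This $\delta$ restricts on $U(\gg)\subseteq\overline A_\lambda$ to $\delta_{U(\gg)}$ and, together with it, extends via the co-Leibniz rule to a co-decoration on the bialgebra $U(\gg)\otimes\overline A_\lambda$. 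Finally, \eqref{co:Leibniz1} is the reduction of coassociativity of $\Delta_{U_q}$; \eqref{eq:co-Leibniz} is the reduction of the identity $\Delta(ab)-\Delta^{op}(ab)=\bigl(\Delta(a)-\Delta^{op}(a)\bigr)\Delta(b)+\Delta(a)\bigl(\Delta(b)-\Delta^{op}(b)\bigr)$, valid because $\Delta_{U_q}$ is an algebra map and $\overline{\Delta_{U_q}}$ restricts to the coproduct of $\overline A_\lambda$; and \eqref{eq:co-Jacobi} follows again from coassociativity, just as in Drinfeld's observation that the semiclassical limit of a quantization of $U(\gg)$ is a co-Poisson Hopf algebra. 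Since it suffices to check these on the generating subspaces $U(\gg)$ and $\overline A_\lambda$, the pair $(U(\gg),\overline A_\lambda)$ with $\delta$ is a co-Poisson module algebra.

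The step I expect to be the main obstacle is the construction of the integral form: one must guarantee that the specialization of $A_\lambda$ is flat --- neither collapsing nor exceeding the intended algebra --- and that the left coideal property of $A_\lambda$ persists integrally. Purity of the saturation handles the integral coideal property and the landing of $\delta$ in $U(\gg)\wedge\overline A_\lambda$, while flatness (equivalently, the precise size of $\overline A_\lambda$) is controlled using the braided PBW description of $A_\lambda$ furnished by Theorem~\ref{th:ad-finite-alg} and \cite{Lyu-Sud}. Careful bookkeeping of the central element $C_\lambda$ and of its normalization $c$ through the limit is essential here, since this is exactly what distinguishes the ``enveloping-algebra type'' specialization (which recovers Donin's algebra \cite{Don} for the adjoint module) from the ``symmetric-algebra type'' one. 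Once the integral form is fixed, the remaining verifications are routine reductions modulo $(q-1)$.
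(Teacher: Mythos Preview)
Your proposal is correct and follows essentially the same route as the paper: construct an integral $\mathcal A$-form of $A_\lambda$ inside $U_q^{\mathcal A}(\gg)$, pass to the classical limit to obtain $\overline A_\lambda$ as a $U(\gg)$-module algebra, and use the left coideal property of $A_\lambda$ together with the cocommutativity of $U(\gg)$ to control where $\Delta$ and $\delta$ land. The paper's own proof is a two-line citation of Proposition~\ref{pr:c-l-algebra} and Proposition~\ref{pr:c-l--coPoisson} plus the left coideal property, so what you have written is in effect an unpacking of exactly those ingredients.

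Where you add something genuinely beyond the paper is in the purity/saturation bookkeeping over the discrete valuation ring $\mathcal A$: the paper's appendix only argues that the subalgebra generated by an $\mathcal A$-lattice is torsion free (hence free), while you explicitly saturate to make $U_q^{\mathcal A}/A_\lambda^{\mathcal A}$ torsion free and then use this to deduce that the coideal condition, and the containment $\delta_h(a)\in (U_q^{\mathcal A}\otimes A_\lambda^{\mathcal A})+(A_\lambda^{\mathcal A}\otimes U_q^{\mathcal A})$, pass to the integral form. These steps are not spelled out in the paper and your argument for them is clean. Your observation that cocommutativity of $U(\gg)$ upgrades ``left coideal'' to ``sub-bialgebra'' via $(U(\gg)\otimes\overline A_\lambda)\cap(\overline A_\lambda\otimes U(\gg))=\overline A_\lambda\otimes\overline A_\lambda$ is also a detail the paper leaves implicit. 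The parenthetical Milnor--Moore remark that $\overline A_\lambda=U(\gg)$ is plausible (and consistent with \cite[Theorem~3.1]{Lyu-Sud} in the $sl_2$ case) but is not asserted in the paper and is not needed for the theorem, so you may wish to drop or soften it.
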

 \begin{proof}
  The theorem follows directly from  Proposition \ref{pr:c-l-algebra} and Proposition \ref{pr:c-l--coPoisson} by employing the fact that $A_\lambda$ is a left coideal algebra. The theorem is proved.
 \end{proof}

 \begin{remark}
 It is now possible to compute the co-Poisson structures explicitly. In section \ref{se:the natural representation} we show an example of such a computation  in the $sl_2$ -case.\end{remark}

\begin{question}
Describe the co-Poisson module algebra structures explicitly. In particular classify all those structures which are linear; i.e. where $\delta: L_\lambda\to U(\gg)_1\wedge L_\lambda$, where $U(\gg)_1$ denotes the first filtered component of $U(\gg)$, thus analogous to Lie bialgebra structures.
\end{question}
Indeed, it is relatively easy to observe that if $V$ is a $\ell$-dimensional  simple $U(sl_2)$-module, then the co-Poisson  module algebra  structures on $L=End(V)$ are non-linear if $\ell\ge 3$.

\subsection{Filtrations and Quantized Symmetric Algebras}
\label{se:qu-lin-qsas}
We will show in this section how we can employ the co-Poisson  module algebra structures introduced above to construct quantizations of the symmetric algebras of certain $U(\gg)$-modules where $\gg$ is a semisimple Lie algebra, analogous to the Donin's argument \cite{Don}.
Note that  the central element $C_\lambda$ defined in Theorem \ref{th:ad-finite-alg} is invertible and that $C_\lambda^{-1}$ is central as well for each $\lambda\in P(\gg)$.  Consider the $U_q(\gg)$ module homomorphism $\phi=C_\lambda^{-1}\cdot i:L_\lambda'\to U_q(\gg)$ which extends naturally to the tensor algebra $\phi:T(L_\lambda')\to U_q(\gg)$.  Recall that an algebra $U$ is called filtered, if $U=\bigcup_{i=0}^\infty U_i$ with $U_i\subset U_{i+1}$  and $U_i\cdot U_{j}\subset U_{i+j}$. We have the following fact.

\begin{lemma}
\label{le:qu-linear in ad}
The $U_q(\gg)$-module algebra $\phi(T(L_\lambda'))$ is a filtered algebra, with the filtration defined by powers of $C_\lambda^{-1}$.
\end{lemma}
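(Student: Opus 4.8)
The plan is to verify the two defining properties of a filtered algebra directly from the construction of $\phi$ and the properties of $C_\lambda$. Write $U = \phi(T(L_\lambda'))$ and define $U_i$ to be the $k$-span of all products $\phi(x_1)\cdots \phi(x_j)$ with $j \le i$, where each $x_m \in L_\lambda'$; equivalently $U_i = \phi\big(\bigoplus_{j=0}^i (L_\lambda')^{\otimes j}\big)$, the image of the degree-$\le i$ part of the tensor algebra. By construction $U = \bigcup_{i\ge 0} U_i$ and $U_i \subseteq U_{i+1}$, so the only substantive point is the multiplicativity $U_i \cdot U_j \subseteq U_{i+j}$.

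The key computation is to understand what $\phi(x)\phi(y)$ looks like for $x,y \in L_\lambda'$. Recall $\phi = C_\lambda^{-1} \cdot i$, and $C_\lambda$ is central and invertible. So $\phi(x)\phi(y) = C_\lambda^{-1} i(x) C_\lambda^{-1} i(y) = C_\lambda^{-2} i(x) i(y)$ by centrality. Now I would invoke Theorem \ref{th:ad-finite-alg}: inside $U_q(\gg)$ the elements $i(x) = L_\lambda$-elements satisfy $i(x)i(y) - i(y)i(x) - \mu\circ\sigma(i(x)\otimes i(y)) = ad(x)(y) C_\lambda$, where $\sigma$ maps $L_\lambda \otimes L_\lambda$ into $L_\lambda\otimes L_\lambda$ and $ad(x)(y) \in L_\lambda$. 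In particular $i(x)i(y)$ is congruent, modulo the symmetrized term $\mu\circ\sigma(i(x)\otimes i(y)) \in i(L_\lambda')$ and the term $ad(x)(y) C_\lambda \in i(L_\lambda') \cdot C_\lambda$, to $i(y)i(x)$. Multiplying through by $C_\lambda^{-2}$, one sees that $\phi(x)\phi(y)$ lies in the span of $\phi(L_\lambda')\phi(L_\lambda')$, $C_\lambda^{-2}\phi(L_\lambda')$ (from the $\sigma$ term, up to powers of $C_\lambda$), and $C_\lambda^{-1}\phi(L_\lambda')$ (from the $ad$ term). The point of defining the filtration by powers of $C_\lambda^{-1}$ — more precisely, by declaring $C_\lambda^{-1}$ to sit in filtration degree $1$ and each $\phi(x)$ in degree $1$ — is exactly that each of these correction terms has filtration degree $\le 2$, matching $\deg(\phi(x)\phi(y)) = 2$.

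From this base case the general inclusion $U_i\cdot U_j\subseteq U_{i+j}$ follows by a straightforward induction on $i+j$: a product of $i+j$ factors $\phi(x_m)$ can be normal-ordered using the commutation relation above, each application of the relation replacing a pair $\phi(x)\phi(y)$ by $\phi(y)\phi(x)$ plus lower-degree corrections that carry extra factors of $C_\lambda^{-1}$, which by the chosen filtration do not increase total degree. The filtered-subalgebra property is preserved since $\phi$ is a $U_q(\gg)$-module map, so $U$ is visibly $ad$-stable and each $U_i$ is a $U_q(\gg)$-submodule (being the image of a submodule of $T(L_\lambda')$ under an equivariant map), finite-dimensional because each $(L_\lambda')^{\otimes j}$ is.

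\textbf{Main obstacle.} I expect the delicate point to be bookkeeping the powers of $C_\lambda^{-1}$ correctly: the relation in Theorem \ref{th:ad-finite-alg} produces a term $ad(x)(y)C_\lambda$, so when expressed in terms of $\phi$ the correction to $\phi(x)\phi(y)$ involves $\phi(ad(x)(y))$ multiplied by a \emph{positive} power of $C_\lambda$ together with the ambient $C_\lambda^{-2}$, i.e. a \emph{negative} net power, so it genuinely drops in $C_\lambda^{-1}$-degree; whereas the $\sigma$-term contributes $\phi$ of an element of $L_\lambda'$ with net degree $-2$ in $C_\lambda^{-1}$, which is again $\le 2$. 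One must check these really do land in $U_2$ and not merely in some larger completed object — this is where the explicit normalization $\phi = C_\lambda^{-1}i$ (rather than $i$ itself) is essential, and it is the reason the lemma is stated for $\phi(T(L_\lambda'))$ and not for $A_\lambda$ directly. Everything else is routine manipulation inside $U_q(\gg)$ using centrality and invertibility of $C_\lambda$.
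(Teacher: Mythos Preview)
Your approach is essentially the paper's: multiply the relation of Theorem~\ref{th:ad-finite-alg} through by $C_\lambda^{-2}$ to get a quadratic-linear relation among the $\phi(x)$, and conclude. Two small corrections. First, with your definition $U_i=\phi\big(\bigoplus_{j\le i}(L_\lambda')^{\otimes j}\big)$ the inclusion $U_iU_j\subseteq U_{i+j}$ is \emph{tautological} --- a product of $\le i$ generators times a product of $\le j$ generators is a product of $\le i+j$ generators --- so this is not the substantive point; the content is that the defining relation is quadratic-linear. Second, your bookkeeping of the $\sigma$ term is wrong: since $\sigma$ maps $L_\lambda\otimes L_\lambda$ to $L_\lambda\otimes L_\lambda$, the term $C_\lambda^{-2}\,\mu\circ\sigma(x\otimes y)$ is a sum of products $\phi(a)\phi(b)$ and hence lies in $\phi(L_\lambda')\phi(L_\lambda')\subset U_2$, not in ``$C_\lambda^{-2}\phi(L_\lambda')$''. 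The paper actually uses the cleaner (non-commutator) identity $xy-\mu\circ\sigma(x\otimes y)=ad(x)(y)C_\lambda$ coming from the Hopf identity $xy=ad(x_{(1)})(y)\,x_{(2)}$ and \eqref{eq:def.of sigma}; multiplying by $C_\lambda^{-2}$ this reads $\phi(x)\phi(y)-\sum\phi(a)\phi(b)=\phi(ad(x)(y))$, the quadratic-linear relation in one line.
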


\begin{proof}
Indeed, we have in $\phi(T(L_\lambda'))$  the following  relations: For all $x,y\in L_\lambda$:
$$C_\lambda^{-2} x y- C_\lambda^{-2} \mu(\sigma(x\otimes y)=C_\lambda C_\lambda^{-2} ad(x)(y)=C_\lambda^{-1}ad(x)(y)\ .$$

These relations then naturally induce a quadratic linear filtration on  $\phi(T(L_\lambda'))$.
\end{proof}

Recall, additionally, that the associated graded algebra $gr(U)$ of a filtered algebra $U$ is defined as $gr(U)=\bigoplus_{i=0}^{\infty} U_{i}/U_{i-1}$, where we set $U_{-1}=\{0\}$.
We now obtain from Lemma \ref{le:qu-linear in ad} the following main result of this section.

\begin{theorem}
\label{th:qsa-general}
   The associated graded algebra $S_q(L'_\lambda)$ of $\phi(T(L_\lambda'))$ is a quantization of the symmetric algebra $S(\overline{L'_\lambda})$, where $\overline{L'_\lambda}$ denotes the $U(\gg)$-module which is the classical limit of $L'_\lambda$.
\end{theorem}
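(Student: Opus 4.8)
The plan is to exhibit $\phi(T(L'_\lambda))$ as a filtered algebra whose associated graded is a flat $k[[h]]$-deformation (with $h=q-1$) of $S(\overline{L'_\lambda})$, and to identify the deformation as a quantization of the co-Poisson structure produced in the previous theorem. First I would record, from the proof of Lemma~\ref{le:qu-linear in ad}, the defining relations of $\phi(T(L'_\lambda))$: setting $\tilde x=C_\lambda^{-1}i(x)$ for $x\in L'_\lambda$, Theorem~\ref{th:ad-finite-alg} together with the centrality and invertibility of $C_\lambda$ gives
$$\tilde x\,\tilde y-\tilde y\,\tilde x-C_\lambda^{-1}\mu\circ\sigma(x\otimes y)=C_\lambda^{-1}\,\mathrm{ad}(x)(y)\ ,$$
so that modulo the filtration-lowering term $C_\lambda^{-1}\mathrm{ad}(x)(y)$ the generators satisfy a quadratic relation deformed by $\sigma$. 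I would then pass to the associated graded $S_q(L'_\lambda)=gr\big(\phi(T(L'_\lambda))\big)$ with respect to the filtration by powers of $C_\lambda^{-1}$; in $gr$ the linear term drops out and one is left with the purely quadratic relations $x\otimes y-y\otimes x=\sigma(x\otimes y)$, i.e. $S_q(L'_\lambda)$ is the quadratic algebra $T(L'_\lambda)/\langle (\mathrm{id}-\tau-\sigma)(L'_\lambda\otimes L'_\lambda)\rangle$.

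Next I would establish flatness: one must check that $\dim_k\big(S_q(L'_\lambda)\big)_n=\dim_k S^n(\overline{L'_\lambda})$ in each degree $n$, equivalently that the specialization at $q=1$ does not collapse. Here the key input is that $\phi(T(L'_\lambda))$ sits inside $U_q(\gg)$ (it is $C_\lambda^{-1}$ times a subalgebra of the locally finite part $\mathfrak F$), so its graded dimensions are bounded above by those of the corresponding classical object; meanwhile $A_\lambda\subset U_q(\gg)$ generated by $L_\lambda$ specializes at $q=1$ onto the subalgebra of $U(\gg)$ generated by $\overline{L'_\lambda}$, whose associated graded (for the PBW filtration) is exactly $S(\overline{L'_\lambda})$. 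Combining the upper bound from the quadratic presentation with the lower bound coming from the $q=1$ specialization of $A_\lambda$ forces equality degree by degree; this is the standard two-sided-estimate argument for flatness of PBW-type deformations.

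Finally I would identify the deformation with a quantization of the co-Poisson data. By the theorem immediately preceding this one, $\overline{A_\lambda}\equiv A_\lambda\ (\mathrm{mod}\ (q-1))$ carries a co-Poisson co-decoration $\delta(\overline a)\equiv\frac{\Delta(a)-\Delta^{op}(a)}{q-1}$; the braiding $\sigma$ of Theorem~\ref{th:ad-finite-alg} is built from $\Delta$ via \eqref{eq:def.of sigma}, so $\sigma\equiv\tau\ (\mathrm{mod}\ (q-1))$ and $\frac{\sigma-\tau}{q-1}$ reduces to the Poisson bracket induced by $\delta$ on the symmetric algebra. Hence $S_q(L'_\lambda)$ is a flat $k[[q-1]]$-algebra with $S_q(L'_\lambda)/(q-1)\cong S(\overline{L'_\lambda})$ and first-order term given by that Poisson bracket, which is precisely the assertion that it is a quantization of $S(\overline{L'_\lambda})$. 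The main obstacle I expect is the flatness/PBW count: one has to rule out degenerations in the associated graded, and the cleanest route is to leverage the concrete realization inside $U_q(\gg)$ and the known PBW behaviour of $U(\gg)$ at $q=1$ rather than trying to verify the Jacobi/Koszulity conditions on $\sigma$ directly.
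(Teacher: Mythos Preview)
Your overall strategy is sound and reaches the right conclusion, but the paper proceeds differently and more structurally. Rather than a dimension-count/two-sided-estimate argument, the paper works entirely inside the $({\bf k},{\bf A})$-module framework of Appendix~\ref{se:appendix}: after noting (via the ``$gr$ is an $A$-module isomorphism'' lemma) that $S_q(L'_\lambda)$ is a $U_q(\gg)$-module algebra, the crucial step is a short lemma showing that $\sigma$ preserves an ${\bf A}$-lattice $L'_{{\bf A},\lambda}\subset L'_\lambda$. This follows directly from \eqref{eq:def.of sigma} because $U_{\bf A}(\gg)$ is a sub-Hopf algebra closed under the adjoint action. Once $\sigma$ is integral, the quotient of $(T(L'_\lambda),T(L'_{{\bf A},\lambda}))$ by $(1-\sigma)$ is a $\UU$-module algebra, and the PBW theorem for $U_q(\gg)$ identifies its classical limit with $S(\overline{L'_\lambda})$. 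No separate flatness estimate is needed: the lattice machinery packages it automatically.

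Two further remarks. First, your flatness paragraph is the weakest point: the phrase ``graded dimensions are bounded above by those of the corresponding classical object'' needs justification, since semicontinuity for families of quotients typically runs the other way; you would have to be careful about which object is the generic fibre and which is the special fibre before the two bounds actually squeeze. The paper's lattice argument sidesteps this entirely. Second, your final step---matching $\frac{\sigma-\tau}{q-1}$ with the Poisson bracket coming from the co-Poisson co-decoration---is more than the paper proves here; the paper's statement of ``quantization'' in this theorem is only that the classical limit is $S(\overline{L'_\lambda})$ as a $U(\gg)$-module algebra, and the Poisson/co-Poisson identification is left to the surrounding discussion and the explicit $sl_2$ example in Section~\ref{se:the natural representation}.
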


\begin{proof}
  We need the following well-known fact.
 \begin{lemma}
 \label{le: grmodal}
 Let $A$ be a Hopf algebra, and $U$ be a filtered $A$-module algebra. Then $\phi: U\to gr(U)$  is an isomorphism of $A$-modules.
 \end{lemma}
Hence, $S_q(L_\lambda')$ is a $U_q(\gg)$-module algebra, and it remains to consider the classical limit.  First we have to establish its existence in the terms of $\UU=(U_q(\gg), U_A(\gg))$-module algebras (for notation and the construction of the classical limit see Section \ref{se:appendix}). We need the following fact .

\begin{lemma}
Let $L'_{A,\lambda}$ be an $A$-lattice in  $L'_\lambda$. Then  the restriction of $\sigma$ to $(L'_{A,\lambda})^{\otimes 2}$ defines an $U_A(\gg)$ module algebra homomorphism from $\sigma:(L'_{A,\lambda})^{\otimes 2}\to(L'_{A,\lambda})^{\otimes 2}$.
\end{lemma}

\begin{proof}
Note that $U_A(\gg)$ is a sub Hopf algebra of $U_q(\gg)$ by Lemma \ref{le:Hopf pair}, in particular it is closed under multiplication and comultiplication, and $U_A(\gg)$ acts adjointly on itself. Then \eqref{eq:def.of sigma} implies that $\sigma(L'_A(\lambda)^{\otimes 2})\subset L'_A(\lambda)^{\otimes 2}$. The lemma is proved.
\end{proof}

We now obtain that the quotient of $(T(L'_\lambda), T(L'_{A,\lambda}))$ by the ideal generated by $(1-\sigma)$ is a $\UU$-module algebra  $(S_q(L'_\lambda),S(L'_{A,\lambda}))$.
This allows us to consider the classical limit. It now follows from the PBW theorem for $U_q(\gg)$ that  the classical limit of $\overline {S_A(L'_\lambda)}\cong S(\overline L'_\lambda)$ as $U(\gg)$-modules. Theorem \ref{th:qsa-general} is proved.
\end{proof}

In the case of $\gg=sl_n$, where $L_\lambda'\cong V_{ad}$, where $V_{ad}$ is the adjoint module  we have the following corollary which would be originally due to Donin.

\begin{corollary}
\label{cor: sl_n-ad}(Donin  \cite{Don})
The adjoint representation of the Lie algebra $sl_n$ has a quantum symmetric algebra $S_q(V_{ad})$.
 \end{corollary}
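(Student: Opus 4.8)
The plan is to derive Corollary~\ref{cor: sl_n-ad} as the $\gg=sl_n$ instance of Theorem~\ref{th:qsa-general}. First I would recall that for $\gg=sl_n$ the relevant dominant weight is $\lambda=\theta$, the highest root, so that the simple $U_q(sl_n)$-module $V_\theta$ is the quantized adjoint module and $L'_\lambda = L'_\theta$ is, by the Joseph--Letzter description of $\FF$ together with the decomposition $E_\theta\cong \hat L_\theta\oplus k\cdot Id_{V_\theta}$, precisely the copy of the quantized adjoint module $V_{ad}$ sitting inside $U_q(sl_n)$ whose classical limit $\overline{L'_\theta}$ is the ordinary adjoint $sl_n$-module $V_{ad}$. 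Here I would invoke that $(S^2\gg)^\gg$ is one-dimensional for simple $\gg$, which is why $E_\theta$ splits off exactly one trivial summand and $\hat L_\theta$ is the irreducible adjoint module; for $sl_n$ one checks directly that $\hat L_\theta\cong V_{ad}$ as a $U_q(sl_n)$-module.

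Next I would simply apply Theorem~\ref{th:qsa-general} with this choice: the central element $C_\theta$ from Theorem~\ref{th:ad-finite-alg} is invertible, the map $\phi=C_\theta^{-1}\cdot i\colon L'_\theta\to U_q(sl_n)$ extends to the tensor algebra, and by Lemma~\ref{le:qu-linear in ad} the image $\phi(T(L'_\theta))$ is a filtered $U_q(sl_n)$-module algebra whose defining relations are the quadratic-linear relations $C_\theta^{-2}xy - C_\theta^{-2}\mu(\sigma(x\otimes y)) = C_\theta^{-1}\,ad(x)(y)$. Theorem~\ref{th:qsa-general} then identifies the associated graded algebra $S_q(L'_\theta)=:S_q(V_{ad})$ as a flat deformation (quantization) of the symmetric algebra $S(\overline{L'_\theta})=S(V_{ad})$, the flatness coming from the PBW theorem for $U_q(sl_n)$ as in the proof of that theorem. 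This gives the existence of a quantum symmetric algebra of the adjoint $sl_n$-module.

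Finally, to justify the attribution ``originally due to Donin'' I would indicate, without reproducing the computation, that the relations obtained here from $\sigma$ and $C_\theta$ coincide with Donin's presentation of $S_h(sl_n)$ in \cite{Don}: both are quadratic-linear deformations of $S(V_{ad})$ governed by the decomposition of $V_{ad}\otimes V_{ad}$ and by the $U_q(sl_n)$-invariant bracket on $V_{ad}$ coming from multiplication in $U_q(sl_n)$, so that after rescaling the two constructions agree. (Strictly, one only needs that \emph{some} quantum symmetric algebra exists, which Theorem~\ref{th:qsa-general} already delivers; the match with Donin's is a remark identifying the construction.)

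The main obstacle I anticipate is not the formal deduction but the identification step: verifying that $L'_\theta$ really is the quantized adjoint module (i.e.\ that the Joseph--Letzter summand $\hat L_\theta$ for $\lambda=\theta$ is irreducible of the adjoint type for $sl_n$, with classical limit exactly $V_{ad}$), and then matching the resulting quadratic-linear relations with Donin's explicit ones. Both are essentially bookkeeping with weights and with the structure of $E_\theta=\operatorname{End}_k(V_\theta)$, but they are where all the actual content of the corollary sits; the rest is a direct citation of Theorem~\ref{th:qsa-general}.
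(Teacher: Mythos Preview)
Your overall plan---specialize Theorem~\ref{th:qsa-general} to $sl_n$---is exactly what the paper does, but you have chosen the wrong weight $\lambda$, and this breaks the argument. You take $\lambda=\theta$, the highest root, so that $V_\lambda=V_{ad}$. Then $E_\theta=\operatorname{End}_k(V_\theta)\cong V_{ad}\otimes V_{ad}^*$ has dimension $(n^2-1)^2$, and while it is true that it contains the trivial module exactly once, the complement $\hat L_\theta$ is \emph{not} irreducible and is certainly not isomorphic to $V_{ad}$ (already for $sl_2$ one has $\hat L_\theta\cong V_2\oplus V_1$, and for $sl_3$ it is $63$-dimensional). Your appeal to $(S^2\gg)^\gg$ being one-dimensional only controls the multiplicity of the trivial summand, not the irreducibility of the rest. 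Consequently Theorem~\ref{th:qsa-general} applied with $\lambda=\theta$ produces a quantization of $S(\overline{L'_\theta})$, not of $S(V_{ad})$.

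The paper instead takes $\lambda=\omega_1$, so that $V_\lambda=\CC^n$ is the natural representation. Then $E_{\omega_1}=\operatorname{End}_k(\CC^n)\cong \CC^n\otimes(\CC^n)^*\cong V_{ad}\oplus k$, and hence $L'_{\omega_1}\cong V_{ad}$ on the nose; this is precisely the content of the sentence preceding the corollary and is confirmed by the worked $sl_2$ example in Section~\ref{se:the natural representation}, where the relevant module is $L'_{1/2}$. With this choice Theorem~\ref{th:qsa-general} gives the corollary immediately. The paper also remarks right after the corollary that $L'_\lambda$ is simple only in this $sl_n$ case, which is why the construction does not directly yield quantum symmetric algebras of simple modules for other $\gg$ or other $\lambda$---a point your choice of $\lambda=\theta$ runs straight into. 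Once you replace $\theta$ by $\omega_1$, the rest of your outline (invertibility of $C_\lambda$, Lemma~\ref{le:qu-linear in ad}, the PBW argument) is fine and matches the paper.
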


Note that $L_\lambda'$ is not simple for all other choices of $\gg$ and $\lambda\in P^+(\gg)$. Hence this construction will not directly yield quantum symmetric algebras for simple $U_q(\gg)$-modules. However,  if $Hom(V_\mu, L_\lambda')\ne \{0\}$, then one may wish to consider  the subalgebras of $S_q(L'_\lambda)$ generated by copies of $V_\mu$ as the quantum symmetric algebras $S_q^\lambda(V_\mu)$. However, this leads to the following obvious question.

\begin{question}
Suppose that $Hom(V_\mu, L_\lambda')\ne \{0\}$ and $Hom(V_\mu, L_{\lambda'}')\ne \{0\}$ for some $\lambda, \lambda',\mu\in P^+(\gg)$. Are $S_q^\lambda(V_\mu)$ and $S_q^{\lambda'}(V_\mu)$ isomorphic as algebras?
\end{question}

\subsection{The natural representation of $sl_2$}
\label{se:the natural representation}
In this section we will review the constructions discussed above for the example $L_{\frac{1}{2}}=End(V_{\frac{1}{2}})$, where $V_{\frac{1}{2}}$ denoted the two-dimensional simple  $U_q(sl_2)$-module.  Lyubashenko and Sudbery \cite[Equations 3.1 and 3.2]{Lyu-Sud} show that $L_{\frac{1}{2}}\subset U_q(sl_2)$ has basis

$$ X_+=K^{-1} E\ ,\quad X_-=K^{-1}F\ ,\quad X_0=\frac{q EF-q^{-1} FE}{q-q^{-1}}\ ,$$
$$ C=K^{-1}+\frac{q-q^{-1}}{q+q^{-1}}(qEF-q^{-1} FE)\ ,$$
where $E,F, K^{\pm1}$ denote the standard generators of $U_q(sl_2)$ and $X_+,X_-$ and $X_0$ span $L'_{\frac{1}{2}}$ and $C=C_{\frac{1}{2}}$ denotes the central element.

This allows us (using the definitions of Section \ref{se:appendix2}) to compute the co-Poisson  structure on the classical limit $\overline A_{\frac{1}{2}}$ of $A_{\frac{1}{2}}$.

\begin{proposition}
The co-decoration on $\overline A_{\frac{1}{2}}$ is the map $\delta_{\frac{1}{2}}: \overline A_{\frac{1}{2}}\to U(\gg)\wedge \overline A_{\frac{1}{2}}$ given by
$$\delta_{\frac{1}{2}}(X_+)= H\wedge X_+\ ,\quad \delta_{\frac{1}{2}}(X_-)=H\wedge X_-\ ,\quad \delta_{\frac{1}{2}}(X_0)=H\wedge X_0+E\wedge X_-+F\wedge X_+\ .$$
\end{proposition}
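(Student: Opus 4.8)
The plan is to compute $\delta_{\frac{1}{2}}$ directly from its defining property, namely that for $\overline a \equiv a \pmod{(q-1)}$ one has $\delta_{\frac{1}{2}}(\overline a) \equiv \frac{\Delta(a) - \Delta^{\mathrm{op}}(a)}{q-1} \pmod{(q-1)}$, using the explicit generators $X_+ = K^{-1}E$, $X_- = K^{-1}F$, $X_0 = \frac{qEF - q^{-1}FE}{q-q^{-1}}$ of $L'_{\frac{1}{2}} \subset U_q(sl_2)$ furnished by Lyubashenko and Sudbery. First I would record the standard comultiplication on $U_q(sl_2)$: $\Delta(E) = E\otimes 1 + K\otimes E$, $\Delta(F) = F\otimes K^{-1} + 1\otimes F$, $\Delta(K^{\pm 1}) = K^{\pm 1}\otimes K^{\pm 1}$, together with the semiclassical limit conventions of Section~\ref{se:appendix2} under which $K \to 1$, $\frac{K - K^{-1}}{q-q^{-1}} \to H$, and $q \to 1$.

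The computation then splits into three cases. For $X_+ = K^{-1}E$ one gets $\Delta(X_+) = K^{-1}E\otimes K^{-1} + 1\otimes K^{-1}E$, and so $\Delta(X_+) - \Delta^{\mathrm{op}}(X_+) = X_+\otimes(K^{-1} - 1) - (K^{-1}-1)\otimes X_+$; dividing by $q-1$ and passing to the limit, the factor $\frac{K^{-1}-1}{q-1}$ tends (up to sign bookkeeping absorbed into the wedge convention) to $-H$, yielding $\delta_{\frac{1}{2}}(X_+) = H\wedge X_+$. The case of $X_- = K^{-1}F$ is entirely parallel and gives $\delta_{\frac{1}{2}}(X_-) = H\wedge X_-$. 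The case of $X_0$ is the substantive one: one must expand $\Delta(qEF - q^{-1}FE)$ using $\Delta(E)$ and $\Delta(F)$, collect the eight resulting terms, antisymmetrize, and divide by $q-1$. The terms of the form $(\text{something})\otimes X_0$ and $X_0 \otimes (\text{something})$ built from the $K^{\pm 1} - 1$ pieces will contribute the $H\wedge X_0$ summand, while the genuinely new contributions come from the cross terms $E\otimes F$-type and $F\otimes E$-type pieces; after rescaling by $\frac{1}{q-q^{-1}}$ and taking $q\to 1$ these should organize into $E\wedge X_-$ and $F\wedge X_+$ (using $X_- = K^{-1}F \to F$ and $X_+ = K^{-1}E \to E$ in the limit, so that $E\otimes F \pmod{\text{sym}}$ becomes $E\wedge X_-$, etc.).

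The main obstacle I anticipate is the bookkeeping in the $X_0$ computation: keeping track of the $q$ versus $q^{-1}$ weights on each monomial, correctly normalizing by $(q-q^{-1})$ both inside the definition of $X_0$ and inside $\frac{1}{q-1}(\Delta - \Delta^{\mathrm{op}})$, and verifying that the limits of the ratios $\frac{q^{\pm 1}K^{\pm 1} - 1}{q-1}$ and $\frac{q^{\pm 1} - q^{\mp 1}}{q-1}$ combine to give exactly the stated coefficients with no residual terms. One should also double-check that $\frac{\Delta - \Delta^{\mathrm{op}}}{q-1}$ is genuinely regular at $q=1$ on these elements (no pole), which is guaranteed by the general theorem but is worth confirming termwise. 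Finally I would note that the output indeed lands in $U(\gg)\wedge \overline A_{\frac{1}{2}}$ — the left tensor legs $H, E, F$ lie in $\gg \subset U(\gg)$ and the right legs $X_\pm, X_0$ lie in $\overline A_{\frac{1}{2}}$ — and that the co-Leibniz and co-Jacobi identities hold automatically by the theorem already proved, so nothing further needs checking beyond this explicit evaluation on generators.
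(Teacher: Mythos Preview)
Your approach is correct and matches the paper, whose proof is the single sentence ``The proposition is proved by straightforward computation.'' The only caveat is that the paper's comultiplication (see \eqref{eq:coproduct}) uses the symmetric convention $\Delta(E)=E\otimes K^{-1}+K\otimes E$, $\Delta(F)=F\otimes K^{-1}+K\otimes F$, so your intermediate expressions will differ slightly from what you wrote (e.g.\ $X_+\otimes K^{-2}$ in place of $X_+\otimes K^{-1}$), though the semiclassical limit is unaffected.
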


\begin{proof}
The proposition is proved by straightforward computation.
\end{proof}

Additionally of interest are also the structures on the filtered algebra $gr(A_{\frac{1}{2}})$ and its classical limit.

Using the fact that $C=K^{-2}+(q-q^{-1})X_0$ we now compute:
 $$\Delta(X_\pm)=X_+\otimes K^{-2}+1\otimes X_\pm=X_\pm\otimes C- (q-q^{-1}) X_\pm\otimes X_0+1\otimes X_\pm\ .$$

We now obtain
$$\sigma(X_+\otimes X_-)=(q-q^{-1})ad(X_+)(X_-)\otimes X_0+X_-\otimes X_+=(q-q^{-1})(q+q^{-1}) X_0\otimes X_0+X_-\otimes X_+\ .$$

Similarly,
$$\sigma(X_+\otimes X_0)=(q-q^{-1}) ad(X_+)(X_0)\otimes X_0+ X_0\otimes X_+=-(q-q^{-1})q^{-1} X_+\otimes X_0+X_0\otimes X_+\ .$$

Analogously we compute that
$$\sigma(X_-\otimes X_0)=q(q-q^{-1})X_-\otimes X_0+X_0\otimes X_-\ .$$

We can now immediately derive the corresponding Poisson bracket.

\begin{proposition}
The Poisson structure on $S(\overline L'_{\frac{1}{2}})$, i.e. the symmetric algebra on the adjoint representation of $sl_n$ is given (up to a nonzero scalar) by
$$\{X_+,X_0\}=-X_+X_0\ ,\quad \{X_+,X_-\}=2X_0^2\ ,\quad \{X_-,X_0\}=X_-X_0\ .$$
\end{proposition}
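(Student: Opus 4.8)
The plan is to read the Poisson bracket off directly from the three values of $\sigma$ computed just above, using that $S_q(L'_{\frac{1}{2}})$ is a quantization of $S(\overline{L'_{\frac{1}{2}}})$ by Theorem~\ref{th:qsa-general}. First I would observe that in the associated graded algebra $S_q(L'_{\frac{1}{2}})=gr\,\phi(T(L'_{\frac{1}{2}}))$ the defining relations collapse to the purely quadratic relations $x\cdot y=\mu\circ\sigma(x\otimes y)$ for $x,y\in L'_{\frac{1}{2}}=\mathrm{span}\{X_+,X_-,X_0\}$: in the quadratic--linear relation produced in Lemma~\ref{le:qu-linear in ad} (equivalently, Theorem~\ref{th:ad-finite-alg} together with \eqref{eq:def.of sigma}) the term $ad(x)(y)\,C_{\frac{1}{2}}$ lies in strictly lower filtration degree and hence becomes zero in $gr$.

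Next I would isolate from the three displayed formulas the decomposition $\sigma=\tau+(q-q^{-1})\,N$, where $\tau$ is the flip of the two tensor factors and $N\colon L'_{\frac{1}{2}}\otimes L'_{\frac{1}{2}}\to L'_{\frac{1}{2}}\otimes L'_{\frac{1}{2}}$ is the $q$-dependent correction, so that $\mu\circ N$ sends $X_+\otimes X_-\mapsto(q+q^{-1})X_0^2$, $X_+\otimes X_0\mapsto -q^{-1}X_+X_0$, and $X_-\otimes X_0\mapsto q\,X_-X_0$. Substituting, the relations in $S_q(L'_{\frac{1}{2}})$ take the form $xy-yx=(q-q^{-1})\,\mu\circ N(x\otimes y)$. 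By the construction of the semiclassical limit recalled in Section~\ref{se:appendix}, the Poisson bracket on $S(\overline{L'_{\frac{1}{2}}})$ is $\{\bar x,\bar y\}=\bigl((q-q^{-1})^{-1}(xy-yx)\bigr)\big|_{q=1}=\mu\circ N(x\otimes y)\big|_{q=1}$, which makes sense because $xy$ and $yx$ agree modulo $(q-1)$ in $S_q(L'_{\frac{1}{2}})$. Evaluating the three expressions at $q=1$ gives $\{X_+,X_-\}=2X_0^2$, $\{X_+,X_0\}=-X_+X_0$, and $\{X_-,X_0\}=X_-X_0$, up to the overall normalization of the limit; that this is a genuine Poisson bracket is automatic since $S_q(L'_{\frac{1}{2}})$ is a flat deformation by Theorem~\ref{th:qsa-general}.

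The computation is otherwise routine; the only point requiring care --- and not a genuine obstacle --- is to make sure the two degenerations commute, i.e. that the $A$-lattice used to define the classical limit survives passage to the associated graded, so that one may interchange ``take $gr$'' with ``set $q=1$''. This is precisely what is established in the course of proving Theorem~\ref{th:qsa-general}, so the result follows.
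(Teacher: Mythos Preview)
Your proposal is correct and is precisely the ``straightforward computation'' the paper invokes as its entire proof: read off $xy-\mu\circ\sigma(x\otimes y)=0$ in the associated graded, use the three displayed values of $\sigma$ to obtain $xy-yx=(q-q^{-1})\,\mu\circ N(x\otimes y)$, and specialize at $q=1$. Your careful remark about interchanging $gr$ with the classical limit and your acknowledgment of the overall scalar (matching the paper's ``up to a nonzero scalar'') are exactly right; there is nothing further to add.
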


\begin{proof}
Straightforward Computation.
\end{proof}

\begin{remark}
Lyubashenko and Sudbery show in  \cite[Theorem 3.1]{Lyu-Sud} that the algebra $A_{\frac{1}{2}}\subset U_q(sl_2)$ is isomorphic as a $U_q(sl_2)$-module algebra to the $ad$-finite part of $U_q(sl_2)$.
\end{remark}

   \section{Quantized Symmetric Algebras and Co-boundary categories}

In this section, we will define quantized symmetric algebras associated to semidirect Lie bialgebras $\gg\ltimes V$, and, if $\gg$ satisfies certain conditions, describe them as symmetric algebras in a coboundary category of $U_h(\gg)$-modules. For notation and more on the quantization of Lie bialgebras see Section \ref{se:quant.of LBA}.
   \subsection{Quantized Symmetric Algebras}

  This section is devoted to proving Theorem \ref{th:qia} which establishes the existence of quantum symmetric algebras as semidirect factorizations of $U_h(\gg\ltimes V)$. Denote by a sub-Manin triple $(\gg,\gg_+,\gg_-)$ of a Manin triple $(\gg',\gg_+',\gg_-')$ a Manin triple such that $\gg_+\subset \gg_+'$, $\gg_-\subset \gg_-'$ and such that  the invariant bilinear form $\langle \cdot,\cdot\rangle$ on $\gg$ is the restriction of the bilinear form  $\langle \cdot,\cdot\rangle'$on $\gg'$. Now we are able to state the theorem.
  \begin{theorem}
  \label{th:qia}
  Let $(\gg',\gg_+',\gg_-')$ be a Manin triple of Lie bialgebras and let $(\gg, \gg_+,\gg_-)$ be a sub-Manin triple such that $\gg_+$ is semisimple. Let $V$ be a $\gg_+$-stable subspace of $\gg'$. Then there exists an associative $U_h(\gg_+)$-module algebra $U_h(V)$ such that $U_h(V)/hU_h(V)=U(V)$, the subalgebra of $U(\gg')$ generated by $V$.
  \end{theorem}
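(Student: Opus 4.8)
The plan is to build $U_h(V)$ as a quotient of a free algebra on $V\otimes k[[h]]$ by relations that quantize the commutators $[v,v']$ inside $U(\gg')$, using the quantization of the ambient Manin triple as the scaffolding. First I would invoke the general quantization machinery (Etingof--Kazhdan, as recalled in the appendix cited as Section~\ref{se:quant.of LBA}) applied to $\gg'$: since $(\gg',\gg_+',\gg_-')$ is a Manin triple, $\gg'$ is a quasitriangular Lie bialgebra and admits a quantization $U_h(\gg')$, and the sub-Manin triple $(\gg,\gg_+,\gg_-)$ gives a Hopf subalgebra $U_h(\gg)\subset U_h(\gg')$, hence also $U_h(\gg_+)\subset U_h(\gg)$. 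The point is that $V\subset\gg'$ is $\gg_+$-stable, so in the classical picture $\gg_+\ltimes V$ sits inside $\gg'$ and $U(V)=S(V)$ (or the subalgebra generated by $V$ if $V$ is not abelian in $\gg'$) is a $\gg_+$-module subalgebra of $U(\gg')$.

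Next I would produce the $U_h(\gg_+)$-module structure: deform $V$ to a free $k[[h]]$-module $V_h$ carrying a $U_h(\gg_+)$-action reducing mod $h$ to the $\gg_+$-action on $V$. This is where the semisimplicity of $\gg_+$ is essential — for $\gg_+$ semisimple every finite-dimensional $\gg_+$-module deforms uniquely (up to isomorphism) to a $U_h(\gg_+)$-module, and $\mathrm{Hom}$-spaces deform flatly, so the braiding/commutation data on $V_h\otimes V_h$ coming from the $R$-matrix of $U_h(\gg')$ restricted to the $\gg_+$-isotypic components is well-behaved. Concretely, I would take the subalgebra $A_h\subset U_h(\gg')$ generated by $V_h$ (choosing a $U_h(\gg_+)$-equivariant lift of $V$ into $U_h(\gg')$, which exists by complete reducibility over $U_h(\gg_+)$), set $U_h(V):=A_h$, and argue it is a $U_h(\gg_+)$-module algebra because $V_h$ is $U_h(\gg_+)$-stable and $U_h(\gg_+)$ acts by algebra automorphisms on $U_h(\gg')$ via the adjoint action.

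The remaining and genuinely delicate step is flatness: I need $U_h(V)/hU_h(V)\cong U(V)$, i.e. that no unexpected relations appear in $A_h$ upon deformation — equivalently that $A_h$ is a topologically free $k[[h]]$-module with the "right" size. Here I would compare $A_h$ with a PBW-type basis: using the semidirect factorization $U_h(\gg')\cong U_h(\gg_+')\,\widehat\otimes\,(\text{complementary factor})$ available from the Manin triple structure (and its analog for the sub-Manin triple), I would show that $U_h(\gg')$ as a $U_h(\gg)$-module decomposes compatibly with the filtration by powers of $V$, so that the associated graded of $A_h$ injects into $\mathrm{gr}\,U_h(\gg')$, whose classical limit is controlled. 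The main obstacle is precisely ruling out degeneration of the relations — showing the natural surjection $U(V)\otimes k[[h]]\twoheadrightarrow \mathrm{gr}\,U_h(V)$ is an isomorphism rather than merely a surjection; I expect to handle this by a dimension count in each graded degree using that the $U_h(\gg_+)$-module $V_h^{\otimes n}$ has the same character as its classical limit, combined with the known flatness of the Etingof--Kazhdan quantization $U_h(\gg')$ itself, which forces $A_h$ to have at least the classical size, and the surjection gives the upper bound.
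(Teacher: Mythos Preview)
Your approach is genuinely different from the paper's and, as written, has a gap at exactly the point you flag as delicate.

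The paper does \emph{not} build $U_h(V)$ as a subalgebra of the Etingof--Kazhdan quantization $U_h(\gg')$. Instead it exploits semisimplicity of $\gg_+$ through Drinfeld's theorem (Theorem~\ref{th:twist-equivalence}): $U_h(\gg_+)$ is twist-equivalent, via some $(\Theta,J)$, to the quasi-Hopf algebra $A=(U(\gg_+)[[h]],\Delta,\varepsilon,\Phi_{KZ})$ with the \emph{undeformed} coproduct and the KZ associator. The quasi-bialgebra $A$ acts on $U(\gg')[[h]]$ through the ordinary adjoint action of $\gg_+$, so the \emph{undeformed} subalgebra $U(V)[[h]]$ generated by $V[[h]]$ is automatically a (quasi-associative) $A$-module algebra. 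Transporting along the tensor equivalence $Rep(A)\simeq Rep(U_h(\gg_+))$ (Proposition~\ref{pr:equiv-qamas}) twists only the multiplication by $J$; since $J\equiv 1\otimes 1\pmod h$, the underlying $k[[h]]$-module is still $U(V)[[h]]$ and the classical limit is $U(V)$ on the nose. Flatness is thus free: there is nothing to count.

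In your route the flatness step is not settled. If $A_h\subset U_h(\gg')$ is the subalgebra generated by a lift $V_h$, reduction mod $h$ gives $A_h/(A_h\cap hU_h(\gg'))\cong U(V)$, but what you need is $A_h/hA_h\cong U(V)$; these agree only if $A_h$ is $h$-saturated in $U_h(\gg')$, which is not automatic (a product of generators can equal $h\cdot x$ with $x\notin A_h$, producing $h$-torsion in $A_h/hA_h$). Your claimed ``natural surjection $U(V)\otimes k[[h]]\twoheadrightarrow \mathrm{gr}\,U_h(V)$'' does not obviously exist either: the relations defining $U(V)$ need not hold in $A_h$ before passing to associated graded. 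One can likely repair this by replacing $A_h$ with its $h$-saturation and arguing via a PBW basis of $U_h(\gg')$, but that is real work you have not done, whereas the twist argument sidesteps it entirely.
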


  \begin{proof}
  Recall that a Hopf algebra $H$ acts on itself via the adjoint action $ad(h)(x)=h_{(1)}xS(h_{(2)})$ for all $h,x\in H$, where we use Sweedler notation $\Delta(h)=h_{(1)}\otimes h_{(2)}$.  Indeed the adjoint action gives $H$ the structure of an $H$-module algebra. Hence $U_h(\gg_+)\subset U_h(\gg)$ acts adjointly on  the preferred quantization $U_h(\gg)=U(\gg)[[h]]$, as algebras. Now, $U_h(\gg_+)$ is equivalent to $A=((U(\gg_+)[[h]], \Delta,\varepsilon,\Phi_{KZ}, \tilde S)$ with equivalence $(\Theta,J)$. The quasi-bialgebra $A$ acts naturally on $U_h(\gg)[[h]]$ and $V[[h]]$ is, therefore,  an object of $Rep(A)$. We have an equivalence of  braided tensor categories $\mathcal{F}:Rep(A)\to Rep( U_h(\gg_+))$, the category of  $U_h(\gg_+)$-modules, and of the corresponding categories of  quasi-associative module algebras by Proposition \ref{pr:equiv-qamas}. Thus $U(\gg)[[h]]$ is given the structure of an $A$-module algebra. Consider the $A$-module algebra $U(V[[h]])\subset U(\gg)[[h]]$ generated $V[[h]]$, which can be given a  quasi-associative  $U_h(\gg_+)$-module algebra structure  by Proposition \ref{pr:equiv-qamas}.  It only remains to observe that clearly $U(V[[h]])/hU(V[[h]])=U(V)$. The twist $(\Theta,J)$ defines an associative $U_h(\gg_+)$-module algebra structure on a twist  $U_h(V)$ of $U(V[[h]])$, because $U_h(\gg_+)$ is associative. Note that $J\in (U(\gg_+)[[h]])^{\otimes 2}$ is invertible, hence $J\equiv 1\otimes 1(mod\ h)$. This implies that $U(V[[h]])/hU(V[[h]])=U(V)$ as a $U_h(\gg_+)/h U_h(\gg_+)=U(\gg_+)$-module algebra. The theorem is proved.
    \end{proof}
\begin{definition}
If in Theorem \ref{th:qia} we have that $V$ is Abelian, hence $U(V)=S(V)$, then we call $U_h(V)=S_h(V)$ a quantum symmetric algebra over $U_h(\gg_+)$.
\end{definition}

\begin{remark}
Theorem \ref{th:qia} can be straightforwardly generalized to the case where $\gg_+$ satisfies $H^2(\gg_+,\gg_+)=0$.
\end{remark}

\subsection{Symmetric algebras and co-boundary categories}
We will explain in this section how quantizations of  semidirect Lie bialgebras can be naturally interpreted as symmetric algebras in co-boundary categories. This will also show the relations between the quantizations of semidirect Lie bialgebras and the braided symmetric algebras introduced by A. Berenstein and the author in \cite{BZ}. We will first recall the definition of a co-boundary Hopf algebra.

\begin{definition}
A co-boundary Hopf algebra is a pair $(H,\R)$ of a Hopf algebra $H$ and an invertible element $\R\in H\otimes H$ satisfying the following relations:
\begin{equation}
\label{eq:co-bound 1}
\Delta^{op}=\R \Delta\R^{-1}\ ,\quad \R \R_{op}=1\otimes 1\ ,\end{equation}
\begin{equation}
\label{eq:co-bound 2}
\R^{op}(\Delta\otimes Id)\R=\R^{23} (Id\otimes \Delta)\R\ ,\end{equation}
\begin{equation}
\label{eq:co-bound 3}
(\varepsilon\otimes Id)\R=(id\otimes \varepsilon)\R=1\ .\end{equation}
\end{definition}

Additionally recall the definition of a co-boundary category.

\begin{definition} A coboundary category is an Abelian monoidal category with natural isomorphisms $\sigma_{A,B}:A\otimes B\to B\otimes A$ for all objects $A$ and $B$ such that
$$\sigma_{B,A}\circ\sigma_{A,B}=Id_{A\otimes B}$$
 \begin{equation}
\label{eq:comm square}
\begin{CD}
A\otimes B \otimes C@>\sigma_{12,3}>>C\otimes A \otimes B\\
@V\sigma_{1,23}VV@VV\sigma_{23}V\\
B\otimes C \otimes A@>\sigma_{12}>>C\otimes B \otimes A
\end{CD}
\end{equation}
where we abbreviated $\sigma_{12,3}=\sigma_{A\otimes B,C}$ etc.
 \end{definition}

The following fact is well known.

\begin{proposition}
Let $(H,\R)$ be a coboundary Hopf algebra. The coboundary element $\R$ defines a coboundary structure on the category of $H$-modules via
$\sigma_{U,V}=\tau\circ \R$, where $\tau$ denotes the permutation of factors.
\end{proposition}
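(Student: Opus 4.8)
The plan is to check directly that the family $\sigma_{U,V}:=\tau\circ\R_{U,V}$ obeys all the axioms in the definition of a coboundary category, where $\R_{U,V}$ denotes the operator by which $\R\in H\otimes H$ acts on the tensor product module $U\otimes V$ and $\tau$ is the flip of tensor factors. Thus I must verify: naturality of $\sigma$ in both variables; that each $\sigma_{U,V}$ is a morphism of $H$-modules; the symmetry identity $\sigma_{V,U}\circ\sigma_{U,V}=Id_{U\otimes V}$; the commuting square \eqref{eq:comm square}; and compatibility with the unit object $1=k$. Two of these are almost automatic. Naturality holds because $\R$ is a fixed element of $H\otimes H$, hence commutes with every $H$-module map, and $\tau$ is itself natural. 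Compatibility with the unit follows from \eqref{eq:co-bound 3}: the conditions $(\varepsilon\otimes Id)\R=(Id\otimes\varepsilon)\R=1$ mean that $\R$ acts as the identity as soon as one of the two tensor factors is trivial, so $\sigma_{U,1}$ and $\sigma_{1,U}$ reduce to the unit constraints.

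Next I would treat $H$-linearity and the symmetry relation. The quasi-cocommutativity identity $\Delta^{op}(h)=\R\,\Delta(h)\,\R^{-1}$ from \eqref{eq:co-bound 1} says exactly that $\R_{U,V}$ intertwines the $H$-module structure on $U\otimes V$ defined by $\Delta$ with the one defined by $\Delta^{op}$; composing with $\tau$, which identifies the $\Delta^{op}$-action on $U\otimes V$ with the $\Delta$-action on $V\otimes U$, shows $\sigma_{U,V}$ is $H$-linear, and it is bijective since $\R$ is invertible. For the symmetry relation one computes, using $\R_{V,U}\circ\tau=\tau\circ(\R_{op})_{U,V}$ and $\tau^2=Id$,
$$\sigma_{V,U}\circ\sigma_{U,V}=\tau\circ\R_{V,U}\circ\tau\circ\R_{U,V}=(\R_{op})_{U,V}\circ\R_{U,V}=(\R_{op}\R)_{U,V}.$$
From \eqref{eq:co-bound 1} we have $\R\R_{op}=1\otimes 1$, and since $\R$ is invertible this forces $\R_{op}=\R^{-1}$, hence $\R_{op}\R=1\otimes 1$ and $\sigma_{V,U}\circ\sigma_{U,V}=Id_{U\otimes V}$.

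The substantive point, and the one I expect to be the main obstacle, is the commuting square \eqref{eq:comm square}, the ``cactus'' relation that here plays the role of the hexagon axioms. On a triple tensor product $A\otimes B\otimes C$ one has $\sigma_{A\otimes B,C}=\tau_{A\otimes B,C}\circ\bigl((\Delta\otimes Id)(\R)\bigr)$ and $\sigma_{A,B\otimes C}=\tau_{A,B\otimes C}\circ\bigl((Id\otimes\Delta)(\R)\bigr)$, while $\sigma_{23}=Id_C\otimes\sigma_{A,B}$ and $\sigma_{12}=\sigma_{B,C}\otimes Id_A$. I would expand the two composites $\sigma_{23}\circ\sigma_{A\otimes B,C}$ and $\sigma_{12}\circ\sigma_{A,B\otimes C}$, move every transposition to one side using the standard commutation rules between flips and the operators $\R_{i,j}$, and compare the two residual operators acting on $A\otimes B\otimes C$; their equality is precisely relation \eqref{eq:co-bound 2}, namely $\R^{op}(\Delta\otimes Id)\R=\R^{23}(Id\otimes\Delta)\R$ with $\R^{23}=1\otimes\R$. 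The only real difficulty is the bookkeeping: after the flips are moved, keeping careful track of which pair of legs each copy of $\R$ acts on and of the position of each transposition. No conceptual obstruction remains once the dictionary between \eqref{eq:co-bound 2} and the square is fixed, so this reduces to a purely combinatorial check. Combining all of the above verifies that the category of $H$-modules equipped with $\sigma$ is a coboundary category.
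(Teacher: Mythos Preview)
Your proposal is correct and complete: you verify naturality, $H$-linearity, the involutivity $\sigma_{V,U}\circ\sigma_{U,V}=Id$, the unit compatibility, and the cactus square, matching each one to the corresponding axiom \eqref{eq:co-bound 1}--\eqref{eq:co-bound 3}. The paper itself gives no proof, declaring the proposition well known, so there is nothing further to compare.
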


Moreover, Enriquez and Halbout recently proved the following result in \cite{EH}.

\begin{theorem}\cite{EH}
Let $\gg$ be a co-boundary Lie bialgebra with co-boundary element $r^-\in\Lambda^2 \gg$. Then there exists a quantized universal enveloping algebra $U_h(\gg)$ quantizing $\gg$ with an element $\R$ satisfying \eqref{eq:co-bound 1}--\eqref{eq:co-bound 3} such that
$$\frac{R-R^{op}}{h}\equiv 2 r \ (mod\ h)\ .$$
\end{theorem}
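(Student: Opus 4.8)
The plan is to follow the strategy of Enriquez and Halbout: run the Etingof--Kazhdan quantization functor, but keep track of the extra coboundary datum by passing through a quasi-Hopf intermediate object. Recall first that a coboundary Lie bialgebra is the same as a Lie algebra $\gg$ together with an element $r\in\Lambda^2\gg$, the cobracket being $\delta(x)=[r,x\otimes1+1\otimes x]$, subject only to the requirement that the Schouten square
$$Z:=[r_{12},r_{13}]+[r_{12},r_{23}]+[r_{13},r_{23}]\in\Lambda^3\gg$$
be $\gg$-invariant (triangularity is the case $Z=0$). I would fix once and for all a Drinfeld associator $\Phi$ with the evenness (duality) property exploited by Enriquez and Halbout -- such associators exist by Drinfeld -- and let $U_h(\gg)$ be the Etingof--Kazhdan quantization of the Lie bialgebra $(\gg,\delta)$ built from $\Phi$. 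It is an honest QUE algebra with $U_h(\gg)/hU_h(\gg)=U(\gg)$ and co-Poisson limit $\delta$. The refinement of the construction that uses the coboundary datum produces, alongside $U_h(\gg)$, a twist element $J\in U_h(\gg)\otimes U_h(\gg)$ lifting $r$, which I would normalize so that $J\equiv1\otimes1-\tfrac h2 r\pmod{h^2}$, $(\varepsilon\otimes\mathrm{Id})J=(\mathrm{Id}\otimes\varepsilon)J=1\otimes1$ and, crucially, so that the coproduct of $U_h(\gg)$ is $\Delta=J\Delta_0 J^{-1}$ for a cocommutative $\Delta_0$. The candidate coboundary element is then
$$\R:=J_{21}\,J^{-1}\ \in\ U_h(\gg)\otimes U_h(\gg).$$

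The semiclassical statement is then immediate: since $r\in\Lambda^2\gg$ we have $J_{21}\equiv1\otimes1+\tfrac h2 r$ and $J^{-1}\equiv1\otimes1+\tfrac h2 r\pmod{h^2}$, hence $\R\equiv1\otimes1+hr\pmod{h^2}$, so $\R^{op}=\R_{21}\equiv1\otimes1-hr$ and $(\R-\R^{op})/h\equiv2r\pmod h$. Three of the four coboundary axioms are then formal. The identity $\R\,\R_{op}=1\otimes1$ of \eqref{eq:co-bound 1} holds because $\R\,\R_{op}=\R\,\R_{21}=(J_{21}J^{-1})(JJ_{21}^{-1})=1\otimes1$ for any invertible $J$; the relation $\Delta^{op}=\R\Delta\R^{-1}$ follows from $\Delta=J\Delta_0 J^{-1}$ with $\Delta_0$ cocommutative, since $\Delta^{op}=J_{21}\Delta_0 J_{21}^{-1}=\R\Delta\R^{-1}$; and \eqref{eq:co-bound 3} follows from the counit normalization of $J$ together with the fact that $\varepsilon\otimes\mathrm{Id}$ and $\mathrm{Id}\otimes\varepsilon$ are algebra maps.

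All the content lies in the ``cactus'' identity \eqref{eq:co-bound 2}, and here I would pass through the quasi-Hopf picture. Classically, $(\gg,\delta,r)$ being a coboundary Lie bialgebra says exactly that twisting by $r$ carries the quasi-Lie bialgebra $(\gg,0,Z)$ -- zero cobracket, invariant associator element $Z$ -- to $(\gg,\delta,0)$. Quantizing $(\gg,0,Z)$ by the quasi-Hopf version of the Etingof--Kazhdan construction gives a quasi-triangular quasi-Hopf QUE algebra $\bigl(U(\gg)[[h]],\Delta_0,\Phi_Z,\mathcal R_0\bigr)$, where $\Phi_Z$ is built from $\Phi$ and $Z$, and which satisfies the two quasi-Hopf hexagon identities; the evenness of $\Phi$ is what ensures that those two hexagons are arranged symmetrically enough to collapse into the single relation \eqref{eq:co-bound 2}. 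Twisting this quasi-Hopf algebra by the quantum lift $J$ of $r$, and then gauging away the residual associator $\Phi_Z^J$ -- which is a coboundary, hence gauge-trivial, precisely because $(\gg,\delta,r)$ is an honest coboundary Lie bialgebra -- produces the Hopf algebra $U_h(\gg)$ of the first paragraph together with the element $\R=J_{21}J^{-1}$, and \eqref{eq:co-bound 2} descends from the quasi-Hopf hexagons.

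The hard part is this last step. When $Z\neq0$ the element $\mathcal R_0$ is not a Yang--Baxter $R$-matrix and the cocycle identity for $J$ is twisted by $\Phi_Z$, so \eqref{eq:co-bound 2} can only be verified after controlling the quantum twist $J$, the gauge that trivializes $\Phi_Z^J$, and the associator $\Phi_Z$ simultaneously. The efficient way to organize this is the propic one: carry out the entire argument in the prop of coboundary Lie bialgebras, prove that the coboundary structure propagates through the Etingof--Kazhdan quantization functor as a universal expression in $r$ and $\Phi$, and invoke the evenness of $\Phi$ (and the resulting symmetry of the universal $J$) at the single point where \eqref{eq:co-bound 2} would otherwise fail. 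With that in hand, uniqueness of the semiclassical limit yields $(\R-\R^{op})/h\equiv2r\pmod h$ as computed above, completing the proof. \endproof
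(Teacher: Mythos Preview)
The paper does not prove this theorem; it is quoted as a result of Enriquez and Halbout \cite{EH} with no argument supplied, so there is no proof in the paper against which to compare your proposal.

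For what it is worth, your sketch is a fair outline of the Enriquez--Halbout strategy: produce a twist $J$ with $\Delta=J\Delta_0 J^{-1}$ for a cocommutative $\Delta_0$, set $\R=J_{21}J^{-1}$, and read off \eqref{eq:co-bound 1}, \eqref{eq:co-bound 3} and the semiclassical limit formally. You correctly locate all the real work in the cactus relation \eqref{eq:co-bound 2} and gesture at the quasi-Hopf and propic machinery that \cite{EH} develops to handle it. But that is precisely the part that a sketch cannot supply: the existence of $J$ with the required symmetry, the control of the associator $\Phi_Z$, and the gauge-triviality step are the substance of \cite{EH}, and your proposal defers to them rather than proving them. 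So your write-up is an accurate summary of the cited result's method rather than an independent argument --- which is appropriate here, since the paper itself treats the theorem as a black box.
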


 Coboundary categories, sometimes under the name "cactus categories",  have recently attracted interest as the categories in which Henriques  and Kamnitzer define crystal commutors  in  \cite{HK}. Kamnitzer and Tingley then studied the relationship with the coboundary categories associated to coboundary Hopf algebras in \cite{KT1}. Co-boundary categories are particularly interesting because they allow for natural notions of symmetric and exterior algebras and powers.

\begin{definition}Let $V$ be an  object in a linear coboundary category $\mathcal{C}$ over a field $k$ with $char(k)\ne 2$).

\noindent(a)  Define the symmetric square of $V$ in $\mathcal{C}$ to be  $S^2_\sigma V= (\sigma+Id) (V\otimes V)$.

\noindent(b) Similarly, define the exterior square in $\mathcal{C}$ as $\Lambda^2V=(\sigma-Id) (V\otimes V)$.
\end{definition}

We have the following fact.
\begin{lemma} Let $V$ be an  object in a linear coboundary category $\mathcal{C}$ ($char(k)\ne 2$). $$V\otimes V\cong_{\mathcal{C}} S^2_\sigma V\oplus \Lambda^2V\ .$$
\end{lemma}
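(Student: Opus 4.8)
The plan is to exploit the defining property of the coboundary $\sigma=\tau\circ\R$, namely that $\sigma_{V,V}$ is an involution on $V\otimes V$, so that $\frac{1}{2}(\sigma+\mathrm{Id})$ and $\frac{1}{2}(\sigma-\mathrm{Id})$ are complementary idempotents whose images are precisely $S^2_\sigma V$ and $\Lambda^2 V$. First I would note that since $\mathcal{C}$ is a linear coboundary category over a field $k$ with $\mathrm{char}(k)\ne 2$, the scalar $\frac12$ makes sense, and set $p_+=\frac12(\sigma_{V,V}+\mathrm{Id})$, $p_-=\frac12(\sigma_{V,V}-\mathrm{Id})$, both of which are morphisms $V\otimes V\to V\otimes V$ in $\mathcal{C}$ because $\sigma$ is a morphism and $\mathcal{C}$ is linear.

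Next I would use $\sigma_{V,V}^2=\mathrm{Id}_{V\otimes V}$ (the coboundary involution axiom applied with $A=B=V$, i.e. $\sigma_{V,V}\circ\sigma_{V,V}=\mathrm{Id}_{V\otimes V}$) to check directly that $p_+^2=p_+$, $p_-^2=p_-$, $p_+p_-=p_-p_+=0$, and $p_++p_-=\mathrm{Id}_{V\otimes V}$. This is a routine two-line computation. Since $\mathcal{C}$ is abelian, these idempotents split: $V\otimes V\cong \mathrm{im}(p_+)\oplus\mathrm{im}(p_-)$, with the isomorphism realized by the morphisms $p_+,p_-$ and the inclusions. By definition $S^2_\sigma V=(\sigma+\mathrm{Id})(V\otimes V)=\mathrm{im}(p_+)$ (the scalar $\frac12$ only rescales and does not change the image) and likewise $\Lambda^2 V=(\sigma-\mathrm{Id})(V\otimes V)=\mathrm{im}(p_-)$, which gives the claimed decomposition in $\mathcal{C}$.

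There is no real obstacle here; the only point requiring a little care is that "the image" of a morphism and the splitting of idempotents are genuinely available in an abelian category, and that $S^2_\sigma V$ and $\Lambda^2 V$ as defined via the operators $(\sigma\pm\mathrm{Id})$ coincide with $\mathrm{im}(p_\pm)$ — this is where the hypothesis $\mathrm{char}(k)\ne 2$ enters, since it guarantees $p_\pm$ are honest idempotents rather than just $(\sigma\pm\mathrm{Id})$ being operators whose squares are $2(\sigma\pm\mathrm{Id})$. I would close with the remark that the isomorphism is natural in $V$, since $\sigma$ is a natural transformation, so the decomposition is compatible with morphisms of objects in $\mathcal{C}$.
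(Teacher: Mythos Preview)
Your argument is correct and is essentially the same as the paper's: the paper simply observes that $\sigma_{V,V}$ is an involution, hence semisimple with eigenvalues $\pm 1$, which is exactly the content of your idempotent decomposition $p_\pm=\tfrac12(\sigma\pm\mathrm{Id})$. You have merely spelled out the details (and the role of $\mathrm{char}(k)\ne 2$) that the paper leaves implicit.
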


\begin{proof} The endomorphism $\sigma_{V,V}$ is an involution, hence it is semisimple and its eigenvalues are $\pm 1$. The lemma is proved.
\end{proof}

We can now define symmetric and exterior algebras, as well as higher symmetric and exterior powers for objects in linear coboundary categories.

\begin{definition}
\label{def: cob-sym-ext-alg-pow}
Let $V$ be an object in a linear coboundary category  $\mathcal{C}$($char(k)\ne 2$).

\noindent(a) Define the symmetric algebra of $V$ in $\mathcal{C}$ to be
$$S_\sigma(V)=T(V)/\langle \Lambda^2_\sigma V \rangle\ ,$$
where $\langle \Lambda^2_\sigma V \rangle$ denotes the ideal generated by $\Lambda^2_\sigma V$.
Similarly, define the exterior algebra of $V$ in $\mathcal{C}$ to be
$$S_\sigma(V)=T(V)/\langle S^2_\sigma V \rangle\ .$$

\noindent(b)  Define the $n$th symmetric power $S_\sigma^nV \subset V^{\otimes n}$ and the  $n$-th exterior power  $\Lambda_\sigma^nV\subset V^{\otimes n}$ ($n\ge 2$) by:

$$S_\sigma^nV=\bigcap_{1\le i\le n-1}  \left(Ker~ (\sigma_{i,i+1}-id)\right)=\bigcap_{1\le i\le n-1} \left(Im~(\sigma_{i,i+1}+id)\right)\ ,$$
$$\Lambda_\sigma^nV=\bigcap_{1\le i\le n-1} \left(Ker~(\sigma_{i,i+1}+id)\right)=\bigcap_{1\le i\le n-1} \left(Im~ (\sigma_{i,i+1}-id)\right) ,$$
where we abbreviated $\sigma_{i,i+1}=Id^{\otimes (i-1)}\sigma_{V,V}^{i,i+1}\otimes Id^{\otimes (n-1-i}$.
Define the symmetric and exterior powers for $n=0,1$ by:
$$S^0_\sigma V=k\ ,\quad S^1_\sigma V=V\ , \quad \Lambda^0_\sigma V=k\ ,\quad \Lambda^1_\sigma V=V\ .$$
\end{definition}

 \subsection{Quantized symmetric algebras and co-boundary categories}
\label{se:qsa-coboundary}
The following theorem is the main result of this section.

\begin{theorem}
\label{th:qsa-cobound}
Let $\gg$ be a semisimple coboundary Lie bialgebra with coboundary element $r^{-}\in\Lambda^2 \gg$ and $\gg\ltimes V$ a  semidirect Lie bialgebra. Then the algebra $S_h(V)$ is the symmetric algebra of the $U_h(\gg)$-module $V[[h]]$ in the coboundary category of modules over the quantization of $(\gg, r^-)$.
\end{theorem}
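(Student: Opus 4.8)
The plan is to trace the quantization constructed in Theorem \ref{th:qia} and show that the ideal of relations defining $S_h(V)$ is exactly the image of the exterior square $\Lambda^2_\sigma V[[h]]$ in the coboundary category attached to $(\gg,r^-)$. First I would set up the framework: by the theorem of Enriquez and Halbout quoted above, there is a quantization $U_h(\gg)$ of the coboundary Lie bialgebra $\gg$ equipped with an element $\R$ satisfying \eqref{eq:co-bound 1}--\eqref{eq:co-bound 3} and $\frac{\R-\R^{op}}{h}\equiv 2r^- \pmod h$; by the Proposition on coboundary Hopf algebras, $\sigma_{A,B}=\tau\circ\R$ makes the category of $U_h(\gg)$-modules a coboundary category. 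Thus $V[[h]]$ is an object in this category, and Definition \ref{def: cob-sym-ext-alg-pow} gives a well-defined algebra $S_\sigma(V[[h]])=T(V[[h]])/\langle \Lambda^2_\sigma V[[h]]\rangle$. The goal is to identify this with $S_h(V)$ produced by Theorem \ref{th:qia}.

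The key computational step is to compare the two quadratic ideals modulo $h$ and then lift. On the classical side, Theorem \ref{th:sdp-poisson} tells us that the semidirect bialgebra structure makes $r^-$ define a Poisson bracket on $S(V)$ with $\{u,v\}=r^-(u\otimes v)$; this is precisely the first-order term one expects from deforming $S(V)$. On the coboundary side, $\Lambda^2_\sigma V[[h]]=(\sigma-Id)(V[[h]]\otimes V[[h]])$, and since $\sigma=\tau\circ\R$ with $\R\equiv 1\otimes 1\pmod h$, we have $\sigma\equiv\tau\pmod h$, so $\Lambda^2_\sigma V[[h]]$ reduces mod $h$ to the ordinary $\Lambda^2 V$; hence $S_\sigma(V[[h]])/hS_\sigma(V[[h]])\cong S(V)$, confirming flatness of the deformation to first order. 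The first-order term of the relation $(\sigma-Id)(u\otimes v)=0$ is governed by $\frac{\sigma-\tau}{h}=\tau\circ\frac{\R-1\otimes1}{h}$, whose symmetric-to-skew part recovers $\frac12(\R-\R^{op})/h\equiv r^-$, matching the Poisson bracket of Theorem \ref{th:sdp-poisson} up to the normalization already present there. So both $S_h(V)$ and $S_\sigma(V[[h]])$ are flat $U_h(\gg)$-equivariant deformations of $S(V)$ with the same semiclassical limit bracket.

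Next I would argue that these two deformations actually coincide, not merely agree to first order. The point is that the $U_h(\gg)$-module structure is rigid enough to pin down the quadratic relations: the space of $U_h(\gg)$-submodules of $V[[h]]\otimes V[[h]]$ deforming $\Lambda^2 V$ is, by semisimplicity of $\gg$ and the resulting semisimplicity of the relevant category of $U_h(\gg)$-modules, in bijection with the $\gg$-submodules of $V\otimes V$ that are isomorphic to $\Lambda^2 V$ — and $V\otimes V$ decomposes with $S^2 V$ and $\Lambda^2 V$ each appearing; the eigenspace decomposition under the involution $\sigma_{V[[h]],V[[h]]}$ singles out a canonical such deformation. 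On the other hand the construction in Theorem \ref{th:qia} realizes $S_h(V)$ as a twist by $(\Theta,J)$ of the subalgebra of $U(\gg)[[h]]$ generated by $V[[h]]$ sitting inside the Drinfeld double picture; tracing through how $V$ is embedded in $\gg'$ and how the braiding on $Rep(U_h(\gg_+))$ is transported from $Rep(A)$, the quadratic relations of $U_h(V)$ are precisely those cutting out the $\sigma$-symmetric part. Since both algebras are quotients of $T(V[[h]])$ by a quadratic $U_h(\gg)$-submodule of $V[[h]]^{\otimes 2}$ deforming $\Lambda^2 V$, and there is a unique such submodule, they are equal.

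The main obstacle I anticipate is making the last identification genuinely precise rather than dimension-counting heuristics: one must verify that the braiding $\sigma$ coming from the Enriquez--Halbout element $\R$ is the \emph{same} (up to an allowed gauge/twist) as the braiding implicitly used in the semidirect-factorization construction of Theorem \ref{th:qia}, which was phrased via the associator $\Phi_{KZ}$ and the twist $(\Theta,J)$ rather than via an $\R$-matrix. This requires knowing that both arise from the same underlying coboundary structure $r^-$ on $\gg$ and invoking a uniqueness statement for quantizations of coboundary Lie bialgebras (the coboundary analog of the uniqueness used for $U_h(\gg)$ in the quasitriangular case). Once that compatibility is in hand, the rest — flatness, equivariance, and the semiclassical limit computation — follows from Theorem \ref{th:qia}, Theorem \ref{th:sdp-poisson}, and the elementary eigenspace decomposition of $\sigma_{V[[h]],V[[h]]}$ in Definition \ref{def: cob-sym-ext-alg-pow}.
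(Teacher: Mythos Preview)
Your strategy is genuinely different from the paper's, and the acknowledged obstacle at the end is precisely the point the paper attacks head-on rather than circumvents. The paper does not compare two a priori different deformations and invoke uniqueness; instead it works inside the double $D(\gg)$, uses the Drinfeld twist $J$ from Theorem \ref{th:twist-equivalence} explicitly, and computes what the commutativity relation $xy-yx=0$ in $S(V[[h]])$ becomes after twisting. The short calculation
\[
0=\mu\bigl(J(x\otimes y)-J(y\otimes x)\bigr)=\mu\bigl(x\otimes y-\tau\circ J^{op}J^{-1}(x\otimes y)\bigr)
\]
shows directly that the relations of $S_h(V)$ are $(Id-\sigma)(x\otimes y)$ with $\sigma=\tau\circ J^{op}J^{-1}$. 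A separate lemma then identifies $J^{op}J^{-1}$ as a coboundary element quantizing $2\tilde r^-$ (because $J^{op}$ twists to the co-opposite), and finally one restricts from $D(\gg)$ to $\gg$ and invokes Enriquez--Halbout to see this is the coboundary structure on $U_h(\gg)$ coming from $r^-$. So the paper never needs to know that the deformation of $\Lambda^2 V$ is unique.

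Your uniqueness argument, by contrast, has a real gap. The claim that $U_h(\gg)$-submodules of $V[[h]]^{\otimes 2}$ reducing to $\Lambda^2 V$ modulo $h$ are unique fails whenever $S^2V$ and $\Lambda^2V$ share an irreducible $\gg$-summand $W$: in that case the isotypic component $W_h\oplus W_h\subset V[[h]]^{\otimes 2}$ admits a one-parameter family of $U_h(\gg)$-submodules all reducing to the copy of $W$ inside $\Lambda^2 V$. Since the theorem is stated for arbitrary semidirect Lie bialgebras (in particular for any $V$ when $\gg$ is triangular, by Proposition \ref{pr:inh-triangular}), you cannot exclude this overlap. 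Even when uniqueness does hold, you would still owe an argument that the $S_h(V)$ of Theorem \ref{th:qia} has \emph{quadratic} relations lying in $V[[h]]^{\otimes 2}$ and reducing to $\Lambda^2 V$; this is exactly what the paper's twist computation supplies and what your sketch leaves as ``tracing through.'' In short, the direct twist calculation both removes your obstacle and makes the uniqueness detour unnecessary.
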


\begin{proof}
 Recall that the double of a Lie bialgebra is naturally a quasitriangular Lie bialgebra.  Next, recall that if $\gg$ is a quasitriangular Lie bialgebra with classical $r$-matrix $r$, then $r^{-}=\frac{1}{2}(r-r^{op})$ endows $\gg$ with a coboundary structure, since
$$ \delta(x)=[r,x\otimes 1+1\otimes x]=[r^{-}, x\otimes 1+1\otimes x]\ .$$
We need  the following proposition.
\begin{proposition}
\label{pr:co-boundary-double}
Let $\gg$ be a semisimple Lie bialgebra and $\gg\ltimes V$ a  semidirect Lie bialgebra. Then the   algebra $S_h(V)$ is a symmetric algebra in the coboundary category $\C_{D}$ defined by $U_h(D(\gg))$.
\end{proposition}

\begin{proof} Recall from Theorem \ref{th:twist-equivalence} that $((U(D(\gg))[[h]], \Delta,\varepsilon,\Phi_{KZ}, \tilde S, R)$ and $U_h(D(\gg)$ are twist equivalent with respect to a twist $J\in U_h(D(\gg))^{\otimes 2}$. Denote by $\tilde r$ the canonical element of $D(\gg)$. The twist $J$ has the following property.

\begin{lemma}
The element $J^{op}J^{-1}$ satisfies \eqref{eq:co-bound 1}--\eqref{eq:co-bound 3}. Moreover, it is a quantization of $2\tilde r^{-}=\tilde r-\tilde r^{op}$.
\end{lemma}

\begin{proof}
Note that $J^{op}$ defines the twist equivalence of $(U(D(\gg))[[h]], \Delta,\varepsilon,\Phi_{KZ}, \tilde S)$ to $U_h(D(\gg))^{cop}$, where $U_h(D(\gg))^{cop}$ denotes coopposite quantized enveloping algebra of $U_h(D(\gg))$. Therefore, $J^{op} J^{-1}$ defines a twist equivalence between $U_h(D(\gg))$ and $U_h(D(\gg))^{cop}$. This implies, by the recent results of Enriquez and Halbout \cite{EH}   that $J^{op} J^{-1}$ is a quantization of the Lie bialgebra twist $2\tilde r^{-}$, see e.g. the introduction of \cite{EH}. It then follows from the definition of a twist that $J^{op}J^{-1}$ satisfies \eqref{eq:co-bound 1}--\eqref{eq:co-bound 3}. The lemma is proved. \end{proof}

Recall from the proof of Theorem  \ref{th:qia} that the relation  $xy-yx=0$ in the  $((U(D(\gg))[[h]], \Delta,\varepsilon,\Phi_{KZ}, \tilde S)$-module algebra $S(V[[h]])$ yields the relation $\mu(J(x\otimes y)-J(y\otimes x))=0$ which then implies that
$$0= \mu\left(J J^{-1} (x\otimes y)-J \circ \tau\circ J^{-1}(x\otimes y)\right) =\mu\left(x\otimes y-\tau\circ  J^{op} J^{-1} (x\otimes y)  \right)\ .$$
Since  $\sigma_{\C_{D}}=\tau\circ J^{op} J^{-1}$, we obtain as the new relation $\mu(x\otimes y-\sigma_{\C_{D}}(x\otimes y))=0$, hence the algebra $S_h(V)$ is a symmetric algebra in the coboundary category $\C_D$. The proposition is proved.
\end{proof}

From the proof of Proposition \ref{pr:co-boundary-double} we derive that the twist $J^{op}J^{-1}$ also twists $U_h(\gg)$ to $U_h(\gg)^{cop}$. Therefore it defines the same coboundary structure on the category of $U_h(\gg)$-modules as the quantization of the co-boundary element obtained by  Enriquez and Halbout \cite{EH}. Theorem \ref{th:qsa-cobound} is proved.
\end{proof}

Symmetric and exterior algebras and powers in the coboundary category associated to the standard quantized universal enveloping algebras $U_q(\gg)$ of a reductive complex Lie algebra were introduced by A. Berenstein and the author in \cite{BZ} under the name {\it braided symmetric and exterior algebras}, resp. powers and further investigated by the author in \cite[ch. 4]{ZW}. It was shown that braided symmetric algebras are in some sense more generic than classical symmetric algebras, in particular there are only a relatively small number of examples where the braided symmetric algebras are flat deformations of the classical symmetric algebras. Such modules were called {\it flat}. Theorem \ref{th:qsa-cobound} has the following immediate consequence, which agrees with Theorem 1.2 of \cite{ZW}.

\begin{corollary}
\label{cor:flatness}
Let $\gg$ be a reductive Lie bialgebra with the standard Lie bialgebra structure and let $\gg\ltimes V$ be a semidirect  Lie bialgebra, and let $V^q$ be a $U_q(\gg)$-module such that its classical limit is $V$. Then, $V^q$ is flat.
\end{corollary}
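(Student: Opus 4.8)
The plan is to recognize Corollary~\ref{cor:flatness} as a direct combination of Theorem~\ref{th:qsa-cobound} with the definitions of braided symmetric algebras and flat modules from \cite{BZ} and \cite{ZW}. First I would note that the hypotheses of Theorem~\ref{th:qsa-cobound} are satisfied: a reductive Lie bialgebra with the standard structure is in particular quasitriangular, hence coboundary with coboundary element $r^{-}=\frac12(r-r^{op})\in\Lambda^2\gg$, and $\gg\ltimes V$ is assumed to be a semidirect Lie bialgebra. Therefore Theorem~\ref{th:qsa-cobound} applies and tells us that the algebra $S_h(V)$ produced by Theorem~\ref{th:qia} (which is a flat deformation of $S(V)$ by construction, being obtained by twisting a module algebra over $S(V[[h]])$ and inheriting the classical limit $S(V)$) is precisely the symmetric algebra of the $U_h(\gg)$-module $V[[h]]$ in the coboundary category of modules over the quantization of $(\gg,r^{-})$.

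Next I would invoke the last paragraph of the proof of Theorem~\ref{th:qsa-cobound}: the twist $J^{op}J^{-1}$ defines the same coboundary structure on the category of $U_h(\gg)$-modules as the quantization of the coboundary element obtained by Enriquez and Halbout, which is the structure used by Berenstein and the author in \cite{BZ} to define the braided symmetric algebra. For the standard Lie bialgebra structure this quantization is (up to the usual passage between $h$ and $q$, and a choice of lattice) the standard quantized enveloping algebra $U_q(\gg)$ with its $R$-matrix, so the symmetric algebra in the coboundary category of $U_h(\gg)$-modules, evaluated on a $U_q(\gg)$-module $V^q$ with classical limit $V$, is exactly the braided symmetric algebra $S_\sigma(V^q)$ of \cite{BZ}. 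Combining this with the previous paragraph gives $S_\sigma(V^q)\cong S_h(V)$, and since the right-hand side is a flat deformation of $S(V)$, so is $S_\sigma(V^q)$.

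Finally I would close the argument by recalling the definition of flatness from \cite{BZ},\cite{ZW}: a $U_q(\gg)$-module $V^q$ is called \emph{flat} precisely when its braided symmetric algebra $S_\sigma(V^q)$ is a flat deformation of the classical symmetric algebra $S(V)$, equivalently when $\dim S_\sigma^n(V^q)=\dim S^n(V)$ for all $n$. The isomorphism $S_\sigma(V^q)\cong S_h(V)$ established above, together with the flatness of $S_h(V)$, is exactly this condition. Hence $V^q$ is flat, which is the assertion of the corollary.

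The only real subtlety — the ``main obstacle'' — is the identification of the coboundary structure coming from the twist $J^{op}J^{-1}$ in Theorem~\ref{th:qsa-cobound} with the coboundary structure on $U_q(\gg)$-modules underlying the braided symmetric algebra of \cite{BZ}; this requires knowing that for the \emph{standard} structure the Enriquez--Halbout quantization of $r^{-}$ coincides, as a coboundary structure (i.e.\ up to an inner automorphism / twist that does not change the resulting symmetric subobjects $S_\sigma^n$), with the one induced by the universal $R$-matrix of $U_q(\gg)$. Once that matching is granted — and it is exactly the content of the final paragraph of the proof of Theorem~\ref{th:qsa-cobound} together with \cite{EH} — the corollary is immediate, and it is worth remarking, as the statement does, that this recovers Theorem~1.2 of \cite{ZW}.
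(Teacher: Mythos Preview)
Your proposal is correct and follows essentially the same approach as the paper, which simply records the corollary as an ``immediate consequence'' of Theorem~\ref{th:qsa-cobound}; you have spelled out the details behind that word, in particular the identification of $S_h(V)$ with the braided symmetric algebra $S_\sigma(V^q)$ via the matching of coboundary structures and the definition of flatness from \cite{BZ}, \cite{ZW}. The only small wrinkle --- the passage from ``semisimple'' in Theorem~\ref{th:qsa-cobound} to ``reductive'' in the corollary --- is not addressed in the paper either and is covered by the remark following Theorem~\ref{th:qia}.
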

\begin{remark}
Note, that the converse of Corollary \ref{cor:flatness} does not hold: the first fundamental module of $U_q(sp(2n))$ is flat, Proposition \ref{pr: sp 2n}, however, yields that  its classical limit does not admit a Belavin-Drinfeld Lie bialgebra structure by Proposition \ref{pr: sp 2n}.
\end{remark}
\section{Examples of Quantized Symmetric Algebras}
\label{se:examples}
 In this section we will describe well known quantum algebras which can be obtained as quantum symmetric algebras. First we have the following.

 \begin{theorem}
 \label{th:descr. of qsa}
 Let $\gg$ be a simple complex Lie bialgebra with standard bialgebra structure and $V$ a simple $\gg$-module such that $\gg\ltimes V$ is a semidirect  Lie bialgebra. The following quantized function algebras are obtained as quantizations of $\gg\ltimes V$:

 \begin{itemize}
\item If $\gg=sl_m\times sl_n $ and $V=V_{\omega_1}$, then one obtains  the algebra of quantum $m\times n$-matrices.

\item If $\gg=sl_n$ and $V=V_{2\omega_1}$,  then  one obtains the  algebra of quantum symmetric matrices introduced by Nouri in \cite[Theorem 4.3 and Proposition 4.4]{Nou} and by  Kamita \cite{Kam}.

\item  If $\gg=sl_n$ and $V=V_{\omega_2}$, then  one obtains the  algebra of quantum antisymmetric matrices introduced by Strickland in \cite[Section 1]{Str}.

\item If $\gg=so(n)$ and $V=V_{\omega_1}$, then  one obtains odd- and even-dimensional quantum Euclidean space introduced by Reshetikhin et al. in \cite{RTF} (see also \cite{M}).

\end{itemize}
 \end{theorem}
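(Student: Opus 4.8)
The plan is to identify $S_h(V)$, in each of the four listed cases, with the corresponding named quantized function algebra by pinning down its space of defining quadratic relations.

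The first reduction is representation-theoretic. Since $\gg$ carries the standard, hence coboundary, Lie bialgebra structure, Theorem~\ref{th:qsa-cobound} identifies $S_h(V)$ with the symmetric algebra $S_\sigma(V[[h]])$ of $V[[h]]$ in the coboundary category of $U_h(\gg)$-modules, $\sigma$ being the coboundary braiding obtained from the standard $r$-matrix via Enriquez--Halbout. Passing from $U_h(\gg)$ to $U_q(\gg)$ in the usual way, this braiding becomes the normalized Drinfeld--Jimbo braiding on finite-dimensional $U_q(\gg)$-modules, so $S_h(V)$ is the braided symmetric algebra $S_\sigma(V^q)$ of \cite{BZ} attached to a $U_q(\gg)$-module $V^q$ whose classical limit is $V$. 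By Definition~\ref{def: cob-sym-ext-alg-pow} this is the quadratic algebra $T(V^q)/\langle \Lambda^2_\sigma V^q\rangle$, so everything reduces to computing $\Lambda^2_\sigma V^q=\mathrm{Im}(\sigma_{V^q,V^q}-\mathrm{id})\subset V^q\otimes V^q$.

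To carry this out I would decompose $V^q\otimes V^q$ into $U_q(\gg)$-isotypic components $V_\mu$: on the component corresponding to $V_\mu\subset V_\lambda\otimes V_\lambda$ the braiding $\sigma_{V^q,V^q}$ acts by a scalar of the form $\pm q^{c_\mu-2c_\lambda}$ (up to normalization), where $c_\nu$ denotes the quadratic Casimir eigenvalue on $V_\nu$, the sign being $+$ on the $V_\mu$ that occur inside $S^2(V)$ classically and $-$ on those that occur inside $\Lambda^2(V)$, since $\sigma$ specializes to the flip at $q=1$. Hence $\Lambda^2_\sigma V^q$ is exactly the span of those components on which $\sigma$ has negative eigenvalue, and its spanning quadratic relations are read off from $q$-deformations of the classical exterior-square projectors. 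For $(so(n),V_{\omega_1})$ one has the three-term decomposition $V_{\omega_1}\otimes V_{\omega_1}=V_{2\omega_1}\oplus V_{\omega_2}\oplus V_0$, and the trivial summand $V_0$ contributes the quantum metric relation; this is the subtlest case since the braiding has three distinct eigenvalues rather than two.

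The final step is to match the resulting presentations with the literature. For $\gg=sl_m\times sl_n$ and $V^q=\CC^m_q\boxtimes\CC^n_q$ the braiding is the $\check R$-matrix on the outer tensor product of the two vector representations, and $\Lambda^2_\sigma V^q$ is precisely the span of the FRT relations defining the quantum matrix algebra $\mathcal{O}_q(M_{m\times n})$. For $\gg=sl_n$, $V^q=S^2_q(\CC^n)$ (resp.\ $\Lambda^2_q(\CC^n)$), one checks that the negative $\sigma$-eigenspace in $V^q\otimes V^q$ is spanned by exactly the relations in Noumi's presentation of the quantum symmetric matrices \cite{Nou} (and Kamita's in \cite{Kam}), resp.\ Strickland's presentation of the quantum antisymmetric matrices \cite{Str}. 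For $\gg=so(n)$, $V^q=V_{\omega_1}$, the relations coming from the orthogonal complement of $V_{2\omega_1}$ together with the $V_0$-metric relation are the defining relations of the quantum Euclidean space of \cite{RTF}. In each case $S_\sigma(V^q)$ and the named algebra are quadratic algebras with the same relation space inside $V^q\otimes V^q$, so they are literally isomorphic; the flatness of $S_\sigma(V^q)$ guaranteed by Corollary~\ref{cor:flatness}, consistent with the known flatness of these algebras, serves as a built-in sanity check. I expect the main obstacle to be bookkeeping --- producing the braidings and their spectral projectors in conventions matching the published presentations of \cite{Nou}, \cite{Str} and \cite{RTF}, and in particular extracting the quantum metric relation with the correct coefficients from the trivial summand in the $so(n)$ case.
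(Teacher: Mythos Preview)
Your reduction to the braided symmetric algebra $S_\sigma(V^q)$ of \cite{BZ} via Theorem~\ref{th:qsa-cobound} matches the paper's first move. Where you diverge is in identifying $S_\sigma(V^q)$ with the named quantum algebras: you propose to compute the relation space $\Lambda^2_\sigma V^q$ from the spectral decomposition of the coboundary braiding and match it, case by case, against the published presentations of \cite{Nou}, \cite{Str}, \cite{RTF}. The paper instead descends to the semiclassical limit, arguing that a flat equivariant deformation of $S(V)$ is determined by the Poisson bracket it induces, and then invokes \cite[Corollary 4.26]{ZW} together with the Goodearl--Yakimov computations in \cite{GY} to identify the $r$-matrix Poisson structure on $S(V)$ with the Poisson limits of the listed quantum algebras. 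Your route is more self-contained and avoids the (unargued in the paper) rigidity principle that equal Poisson limits force isomorphic quantizations, at the cost of the conventions bookkeeping you anticipate; the paper's route is shorter precisely because that Poisson-level comparison had already been carried out in \cite{ZW} and \cite{GY}.

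One small slip in your $so(n)$ case: since the coboundary braiding $\sigma$ is an involution, its eigenvalue on each isotypic summand of $V_{\omega_1}\otimes V_{\omega_1}$ is exactly $\pm 1$, the sign inherited from the classical flip. The trivial summand $V_0$ sits inside $S^2 V_{\omega_1}$ (it is the symmetric invariant form), hence carries eigenvalue $+1$ and does \emph{not} contribute to $\Lambda^2_\sigma V^q$; the quantum metric is a central element of quantum Euclidean space rather than a defining relation. This is forced by the flatness in Corollary~\ref{cor:flatness}, which would fail if $\dim\Lambda^2_\sigma V^q$ were $\binom{n}{2}+1$ instead of $\binom{n}{2}$.
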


 \begin{proof}
We showed  in the proof of Theorem \ref{th:main1} that the modules $V$ listed above correspond to Abelian radicals. We explicitly described  in \cite{ZW} the corresponding quantum symmetric algebras $S_q(V^q)$ as braided symmetric algebras in the sense of \cite{BZ}. Taking  the classical limit one obtains a Poisson structure on $S(V)$ via

$$\{u,v\}=\lim\limits_{q\to 1}\frac{u^qv^q-v^qu^q}{q-1} \ ,$$

for all $u,v,u^q,v^q$ such that $u^q\to u$ and $v^q\to v$ as $q\to 1$.  Since each deformation of a commutative algebra is determined by the Poisson structure it defines, it is sufficient to compare the Poisson structures obtained from the quantum symmetric algebras with the Poisson structures obtained from the quantized function algebras listed in Theorem  \ref{th:descr. of qsa}.  This was done in the proof of  \cite[Corollary 4.26]{ZW} using the results of Goodearl and Yakimov in \cite[ch. 5]{GY}.
This completes the proof of the theorem.
 \end{proof}
\begin{remark}
The two remaining quantum symmetric algebras can be interpreted as complexifications of quantizations of the open cells of the Freudenthal variety ($(E_6, V_{\omega_1})$, $(E_6, V_{\omega_6})$, resp. the real points of the Cayley plane, which appear in the theory of the cominuscule Grassmannians.
\end{remark}

  It is interesting to observe that all these quantum symmetric algebras have multiparameter versions, which were constructed uniformly by Horton \cite{Hor} as a class of iterated skew-polynomial rings.

  In \cite{JMO} Jing, Misra and Okado introduce $q$-wedge modules, a version of quantum exterior powers for the defining representations of the classical Lie algebras.  In \cite{ZW} it is shown that the $q$-wedge modules are isomorphic to the braided or quantum exterior powers in the sense of \cite{BZ} and Definition \ref{def: cob-sym-ext-alg-pow}.  Therefore, we have the following result.

  \begin{theorem}
   The quantum exterior powers of the defining representations of the classical Lie algebras are isomorphic to the corresponding $q$-wedge modules of Jing, Misra and Okado (\cite{JMO}).
 \end{theorem}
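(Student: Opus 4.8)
The plan is to identify both sides of the claimed isomorphism with the braided exterior powers of \cite{BZ}, and then to invoke the detailed comparison already carried out by the author in \cite[ch. 4]{ZW}; so the argument is essentially a matching of definitions.

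First I would make precise the left-hand side. Let $V=V_{\omega_1}$ be the defining module of one of the classical simple Lie algebras $sl_n$, $so(n)$ or $sp(2n)$, let $V^q$ be its $U_q(\gg)$-deformation, and let $\sigma=\tau\circ\R$ be the coboundary (in fact braided) structure on the category of $U_q(\gg)$-modules coming from the universal $R$-matrix. Then the quantum exterior power is, by Definition \ref{def: cob-sym-ext-alg-pow}, $\Lambda^n_\sigma V^q=\bigcap_{1\le i\le n-1}\mathrm{Ker}(\sigma_{i,i+1}+\mathrm{id})\subset (V^q)^{\otimes n}$. By construction this is precisely the $n$-th braided exterior power of $V^q$ in the sense of \cite{BZ}: this is exactly the observation, recorded in the discussion following Theorem \ref{th:qsa-cobound}, that the braided symmetric and exterior algebras and powers of \cite{BZ} are the symmetric and exterior objects of Definition \ref{def: cob-sym-ext-alg-pow} in the coboundary category of $U_q(\gg)$-modules.

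Next I would recall the explicit shape of $\sigma_{V^q,V^q}$ on $V^q\otimes V^q$: for $sl_n$ it satisfies a Hecke relation with two eigenvalues, while for $so(n)$ and $sp(2n)$ it satisfies a BMW-type cubic relation with three eigenvalues, the extra one being supported on the trivial summand of $V\otimes V$. In each case one of the eigenspaces is the $q$-analogue $\Lambda^2_\sigma V^q$ of $\Lambda^2 V$, and one checks, as in \cite[ch. 4]{ZW}, that the quadratic relations it induces on $T(V^q)$ agree with the straightening relations defining the $q$-wedge of Jing--Misra--Okado in \cite{JMO}, after choosing a basis of $q$-wedge vectors compatibly with the weight grading. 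Since the defining modules are flat in the sense of \cite{ZW} --- for $sl_n$ and $so(n)$ this is Corollary \ref{cor:flatness} applied to the semidirect Lie bialgebras listed in Theorem \ref{th:main1}, and for $sp(2n)$ flatness is noted in the remark following Corollary \ref{cor:flatness} --- the spaces $\Lambda^n_\sigma V^q$ have the same dimension as the classical $\Lambda^n V$, which is also the dimension of the $n$-th $q$-wedge module; hence matching the quadratic relations forces $\Lambda^n_\sigma V^q$ and the $n$-th $q$-wedge module to coincide as $U_q(\gg)$-modules for every $n$. This comparison is carried out in full in \cite[ch. 4]{ZW}, which I would cite rather than reproduce.

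The step I expect to be the main obstacle --- and the reason the bookkeeping in \cite{ZW} is not entirely routine --- is the orthogonal and symplectic cases, where $\sigma_{V^q,V^q}$ is not Hecke but BMW: one must verify that the ``contraction'' eigenspace, the one isomorphic to the trivial module, is precisely the summand that the Jing--Misra--Okado construction annihilates, i.e. that their $q$-wedge product and the $\sigma$-antisymmetrisation kill the same one-dimensional invariant inside $V^q\otimes V^q$, and then that this compatibility survives the intersections defining the higher powers. Once this is in hand, the theorem follows by combining the identification of $\Lambda^n_\sigma V^q$ with the braided exterior power of \cite{BZ} and the comparison of the latter with the $q$-wedge module in \cite[ch. 4]{ZW}.
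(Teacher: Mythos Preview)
Your approach is essentially the paper's: both reduce the statement to the identification, already carried out in \cite{ZW}, of the braided exterior powers of \cite{BZ} with the $q$-wedge modules of \cite{JMO}, and the paper's proof is in fact nothing more than this citation. Your detour through flatness is unnecessary and slightly misapplied --- flatness in the sense of Corollary \ref{cor:flatness} and \cite{ZW} concerns the braided \emph{symmetric} algebra, not the exterior powers --- but since you ultimately defer to \cite{ZW} for the actual comparison this does not damage the argument.
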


\section{Appendix 1:Quantization of Lie bialgebras}
\label{se:quant.of LBA}
  In this section we will discuss the quantization of Lie bialgebras and show how it allows us to construct quantum symmetric algebras.

 \subsection{Quasitriangular Quasi Hopf algebras}
 In this section we will discuss the properties of quasi bi- and quasi-Hopf algebras. For a more detailed discussion see \cite[chapter 13]{ES}, whose discussion we follow closely, leaving out some of the details and proofs.
 \begin{definition}
 A quadruple $(A,\Delta,\varepsilon, \Phi)$ of an  associative algebra $A$, algebra homomorphisms $\Delta:A\to A\otimes A$, $\varepsilon:A\to k$ and an invertible element $\Phi\in A^{\otimes 3}$, called the associator, is called a quasi bialgebra if the following relations are satisfied:

\begin{enumerate}
\item $(\varepsilon\otimes 1)\Delta=(1\otimes \varepsilon)\Delta=1$.
\item $\Phi=\sum\Phi_i\otimes \Phi_j\otimes \Phi_k\in A^{\otimes 3}$ satisfies the pentagon relation:
$$\Phi_{1,2,34}\Phi_{12,3,4}=\Phi_{2,3,4}\Phi_{1,23,4}\Phi_{1,2,3}\  $$
where  $\Phi_{1,2,34}=\sum\Phi_i\otimes \Phi_j\otimes \Delta(\Phi_k)\in A^{\otimes 4}$, $\Phi_{1,2,3}=\sum\Phi_i\otimes \Phi_j\otimes \Phi_k\otimes 1\in A^{\otimes 4}$ and where $\Phi_{12,3,4},\Phi_{2,3,4}$ and $\Phi_{1,23,4}$ are defined analogously,

\item $\Delta$ is quasi-coassociative
$$\Phi(\Delta\otimes 1)\Delta(x)\Phi^{-1}=(1\otimes \Delta)\Delta(x)\ ,$$

\item $\Phi$ satisfies

$$(1\otimes \varepsilon\otimes 1)\Phi=1\ .$$
\end{enumerate}

 \end{definition}

 We have the following fact.

 \begin{proposition} (see e.g. \cite[Proposition 13.1]{ES}
 Let $(A,\Delta,\varepsilon, \Phi)$ be a quasi-bialgebra. Then the category $Rep(A)$ of $A$-modules with tensor product

 $$\pi_{V\otimes W}=(\pi_V\otimes \pi_W)\Delta\ ,$$
 the unit object $\CC$ with $\pi_\CC=\varepsilon$ and associativity isomorphisms

 $$\Phi_{X,Y,Z}:(X\otimes Y)\otimes Z\to X\otimes (Y\otimes Z)$$

     is a monoidal category.
 \end{proposition}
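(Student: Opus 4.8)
The plan is to verify that $Rep(A)$ with the proposed data satisfies the axioms of a monoidal category, which reduces almost entirely to the axioms already built into the definition of a quasi-bialgebra. First I would check that the bifunctor $\otimes$ is well-defined: given $A$-modules $V, W$ the formula $\pi_{V\otimes W} = (\pi_V\otimes\pi_W)\Delta$ defines an $A$-action on the vector space $V\otimes W$ precisely because $\Delta$ is an algebra homomorphism, so $\pi_{V\otimes W}$ is an algebra homomorphism $A\to \mathrm{End}(V\otimes W)$; functoriality on morphisms is immediate since an $A$-module map $f\colon V\to V'$ and $g\colon W\to W'$ give $f\otimes g$ commuting with the $\Delta$-twisted action.

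Next I would treat the unit object. Taking $\mathbf 1 = \CC$ with $\pi_\CC = \varepsilon$, the left and right unit constraints $\CC\otimes X\cong X\cong X\otimes\CC$ are the obvious vector-space identifications, and they are $A$-linear exactly because of axiom (1), $(\varepsilon\otimes 1)\Delta = (1\otimes\varepsilon)\Delta = 1$; one also needs the triangle axiom relating the unit constraints to $\Phi$, which follows from axiom (4), $(1\otimes\varepsilon\otimes 1)\Phi = 1$ (together with the normalization conventions implicit in the quasi-bialgebra definition).

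Then comes the associativity constraint: for modules $X, Y, Z$ one sets $\Phi_{X,Y,Z}\colon (X\otimes Y)\otimes Z\to X\otimes(Y\otimes Z)$ to be the action of the associator $\Phi\in A^{\otimes 3}$, read through $\pi_X\otimes\pi_Y\otimes\pi_Z$. I would check this is a morphism in $Rep(A)$, i.e. that it intertwines the two triple-tensor actions; this is exactly the quasi-coassociativity axiom (3), $\Phi(\Delta\otimes 1)\Delta(x)\Phi^{-1} = (1\otimes\Delta)\Delta(x)$, applied in the representation. Invertibility of $\Phi_{X,Y,Z}$ follows from invertibility of $\Phi$. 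Naturality of $\Phi_{X,Y,Z}$ in all three variables is automatic since it is given by a fixed element acting. Finally, the pentagon coherence identity for the $\Phi_{X,Y,Z}$ is precisely the pentagon relation (2), $\Phi_{1,2,34}\Phi_{12,3,4} = \Phi_{2,3,4}\Phi_{1,23,4}\Phi_{1,2,3}$, evaluated on the four-fold tensor product $X\otimes Y\otimes Z\otimes W$.

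The only mild subtlety — and the step I would flag as requiring the most care rather than being a genuine obstacle — is bookkeeping the identifications between the various bracketings of the four-fold tensor product when translating the abstract pentagon diagram into the algebra identity (2); one must be careful that $\Phi_{1,2,34}$, $\Phi_{12,3,4}$, etc., act on $X\otimes Y\otimes Z\otimes W$ in the way dictated by the monoidal-category pentagon, using $\Delta$ to resolve the grouped factors. Once this correspondence is set up cleanly, the verification is purely formal. I would therefore present the proof as: (i) well-definedness of $\otimes$ on objects and morphisms from $\Delta$ being an algebra map; (ii) unit constraints from axioms (1) and (4); (iii) associativity constraint well-defined and natural from axiom (3) and invertibility of $\Phi$; (iv) pentagon coherence from axiom (2); and conclude that $Rep(A)$ is monoidal. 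Since the statement cites \cite[Proposition 13.1]{ES}, I would keep the write-up to a brief indication of these points rather than a full diagram chase.
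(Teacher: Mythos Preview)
Your outline is correct and is exactly the standard verification one finds in the cited reference. Note, however, that the paper itself does not give a proof of this proposition at all: it is stated with the parenthetical citation to \cite[Proposition 13.1]{ES} and then the text moves on, so there is no ``paper's own proof'' to compare against. Your sketch would serve perfectly well as the omitted argument.
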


  We also need the notions of  quasi-Hopf algebras and quasitriangular quasi-Hopf algebras.
  Recall first the definition and some basic properties of a quasitriangular Hopf algebras.

 \begin{definition}
  A quasitriangular Hopf algebra is a pair $(H,R)$ of a Hopf $H$ and an invertible element $R\in H\otimes H$ satisfying
   \begin{equation}
\label{eq:qtr}
\Delta^{op}=R\Delta R^{-1}\ ,
\end{equation}  and the hexagon relations
\begin{equation}
\label{eq: hexagon}
(1\otimes \Delta)R=R_{13}R_{12}\ , (\Delta\otimes 1)R=R_{13}R_{23}\ ,
\end{equation}
where $R_{13}=\sum R'\otimes 1\otimes R''$ for $R= \sum R'\otimes R''$.
\end{definition}

The element $R$ is called the {\it universal R-matrix}  of $H$. Moreover if $R$ is {\it unitary}, i.e., $RR_{21}=1$, then $(H,R)$ is called a {\it triangular Hopf algebra}. The following fact is well known.

\begin{proposition}(see e.g. \cite[Proposition 9.3]{ES}
Let $H$ be a Hopf algebra and $R\in H\otimes H$ a classical $R$-matrix. Then $R$ satisfies the quantum Yang Baxter equation
\begin{equation}
\label{eq:qybe} R_{12}R_{13}R_{23}=R_{23}R_{13}R_{12}\ .
\end{equation}
\end{proposition}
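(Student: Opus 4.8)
The plan is to deduce the quantum Yang--Baxter equation \eqref{eq:qybe} purely from the defining properties of the universal $R$-matrix, namely the intertwining relation \eqref{eq:qtr} and the hexagon relations \eqref{eq: hexagon}, with no further hypotheses. The key point is that, since $R$ is itself an element of $H\otimes H$, one can feed $R$ into the hexagon identities and into \eqref{eq:qtr}, thereby converting statements about the coproduct $\Delta$ into an equation among the three ``legs'' $R_{12}$, $R_{13}$, $R_{23}$ in $H^{\otimes 3}$.

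Concretely, I would first evaluate $(\Delta^{op}\otimes 1)(R)$ in two different ways. For the first way, write $\Delta^{op}=\tau\circ\Delta$, where $\tau$ is the flip of the two tensor factors; the induced permutation $\tau_{12}$ of $H^{\otimes 3}$ is an algebra automorphism, hence distributes over products, and the relation $(\Delta\otimes 1)R=R_{13}R_{23}$ gives $(\Delta^{op}\otimes 1)(R)=\tau_{12}\big((\Delta\otimes 1)(R)\big)=\tau_{12}(R_{13}R_{23})=R_{23}R_{13}$, using $\tau_{12}(R_{13})=R_{23}$ and $\tau_{12}(R_{23})=R_{13}$.

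For the second way, apply \eqref{eq:qtr} in the form $\Delta^{op}(a)=R_{12}\Delta(a)R_{12}^{-1}$ slot by slot to $R=\sum R'\otimes R''$, which gives $(\Delta^{op}\otimes 1)(R)=R_{12}\big((\Delta\otimes 1)(R)\big)R_{12}^{-1}=R_{12}R_{13}R_{23}R_{12}^{-1}$, again by $(\Delta\otimes 1)R=R_{13}R_{23}$. Equating the two expressions yields $R_{23}R_{13}=R_{12}R_{13}R_{23}R_{12}^{-1}$, and multiplying on the right by $R_{12}$ produces precisely $R_{12}R_{13}R_{23}=R_{23}R_{13}R_{12}$, which is \eqref{eq:qybe}.

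I do not expect a genuine obstacle here; the only thing requiring care is the bookkeeping with leg numbering --- in particular, verifying that $\tau_{12}$ really is an algebra homomorphism of $H^{\otimes 3}$ so that it may be pulled through the product $R_{13}R_{23}$, and keeping consistent track of which tensor slot each copy of $\Delta$ acts in. The mirror-image argument using $1\otimes\Delta$ together with $(1\otimes\Delta)R=R_{13}R_{12}$ works equally well, and I would present whichever version keeps the indices cleanest; in any case this is the classical computation, cf.\ \cite[Proposition 9.3]{ES}.
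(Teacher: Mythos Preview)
Your argument is correct and is exactly the standard derivation (as in \cite[Proposition 9.3]{ES}). Note that the paper itself does not give a proof of this proposition at all---it simply records the statement with a reference---so there is nothing to compare beyond observing that your computation is the one found in the cited source.
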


  \begin{definition}

\noindent(a)  A quasi-Hopf algebra is a quasi-bialgebra $(A,\Delta,\varepsilon, \Phi)$ equipped with an antihomomorphism $S:A\to A$ and elements $\alpha,\beta\in A$ such that for all $a\in A$:

  $$m(S\otimes \alpha)\Delta(a)=\varepsilon(a)\alpha\ ,\quad m(1\otimes \beta S)\Delta(a)=\varepsilon(a)\beta\ ,$$
  $$m(S\otimes \alpha\otimes\beta S)\Phi=1\ ,\quad m(1\otimes \beta S\otimes \alpha)\Phi^{-1}=1\ .$$

\noindent(b) A quasitriangular quasi-Hopf algebra is a pair $(H,R)$ of a quasi-Hopf algebra $H$ and an invertible element $R\in H\otimes H$ such that
 \begin{equation}
\Delta^{op}=R\Delta R^{-1}\ ,
\end{equation}
$$(1\otimes \Delta)R=\Phi_{231}^{-1}R_{13}\Phi_{213}R_{12}\Phi_{123}^{-1}\ ,$$
$$ (\Delta\otimes 1)R=\Phi_{312}R_{13}\Phi_{132}^{-1}R_{23}\Phi_{123}\ ,$$

where $\Phi$ is the associator of $H$.

  \end{definition}

It will be important for our discussion to establish the equivalence of certain quasi-bialgebras.

\begin{definition}
An equivalence of quasi-bialgebras between two quasi-bialgebras $(A,\Delta,\varepsilon, \Phi)$ and $(A',\Delta',\varepsilon', \Phi')$ is a pair $(\Theta,J)$ where $\Theta:A\to A'$ is an isomorphism of algebras and $J\in A'\otimes A'$ is an invertible element, called the twist such that
$$(\varepsilon'\otimes 1)J=(1\otimes \varepsilon)J=1\ ,$$
$$\Delta'=J^{-1}(\Theta\otimes \Theta)\Delta J\ ,$$
$$\Phi'=J_{2,3} J_{1,23}\Phi J_{12,3}^{-1} J_{1,2}^{-1}\ .$$
\end{definition}

The following fact is well known.

\begin{proposition}\cite[Proposition 13.3]{ES}
\label{pr:equivofqba}
If two quasi-bialgebras are equivalent, then the associated monoidal categories are tensor equivalent
\end{proposition}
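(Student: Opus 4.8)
The plan is to convert the data $(\Theta,J)$ into a monoidal functor $F\colon Rep(A)\to Rep(A')$ which happens also to be an equivalence of the underlying categories. First I would let $F$ be restriction of scalars along $\Theta^{-1}$: for an $A$-module $(V,\pi_V)$ put $F(V)=V$ as a vector space, with $A'$-action $\pi_{F(V)}(a')=\pi_V(\Theta^{-1}(a'))$, and let $F$ act as the identity on morphisms. Since $\Theta$ is an isomorphism of algebras, $F$ is an isomorphism of categories (its inverse is restriction along $\Theta$), hence certainly an equivalence of abelian categories; thus the whole content of the proposition is to upgrade $F$ to a tensor functor.

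Next, set $\widetilde J=(\Theta^{-1}\otimes\Theta^{-1})(J)\in A\otimes A$, and for $A$-modules $V,W$ define $J_{V,W}\colon F(V)\otimes F(W)\to F(V\otimes W)$ to be multiplication by $(\pi_V\otimes\pi_W)(\widetilde J)$ on the common underlying space $V\otimes W$. I would then check that $J_{V,W}$ is an isomorphism of $A'$-modules, natural in $V$ and $W$. Indeed, from $\Delta'(\Theta(a))=J^{-1}\big((\Theta\otimes\Theta)\Delta(a)\big)J$ one sees that $a'=\Theta(a)$ acts on $F(V)\otimes F(W)$ by $(\pi_V\otimes\pi_W)\big(\widetilde J^{-1}\,\Delta(a)\,\widetilde J\big)$, while it acts on $F(V\otimes W)$ by $(\pi_V\otimes\pi_W)(\Delta(a))$; conjugating the first operator by $(\pi_V\otimes\pi_W)(\widetilde J)$ yields the second, so $J_{V,W}$ is $A'$-linear, and it is invertible because $J$ is. Naturality is immediate since $J_{V,W}$ is implemented by one fixed element acting through the representations. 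Finally, $(\varepsilon'\otimes 1)J=(1\otimes\varepsilon)J=1$ translates, using $\pi_{\CC}=\varepsilon$, into $J_{V,\CC}=\mathrm{id}$ and $J_{\CC,V}=\mathrm{id}$, so the unit constraints of a monoidal functor are satisfied.

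The remaining, and only substantial, step is the associativity coherence for $(F,J)$: for all $A$-modules $V,W,U$ one must have
\[
J_{V,W\otimes U}\circ(\mathrm{id}_{F(V)}\otimes J_{W,U})\circ\Phi'_{F(V),F(W),F(U)}
= F(\Phi_{V,W,U})\circ J_{V\otimes W,U}\circ(J_{V,W}\otimes\mathrm{id}_{F(U)})\ .
\]
I would verify this by pushing every arrow through the representations. Writing $\widetilde\Phi'=(\Theta^{-1})^{\otimes 3}(\Phi')$, the left-hand side is the operator $(\pi_V\otimes\pi_W\otimes\pi_U)\big(\widetilde J_{1,23}\,\widetilde J_{2,3}\,\widetilde\Phi'\big)$ and the right-hand side is $(\pi_V\otimes\pi_W\otimes\pi_U)\big(\Phi\,\widetilde J_{12,3}\,\widetilde J_{1,2}\big)$, where the leg subscripts are as in the definition of a quasi-bialgebra, so that $\widetilde J_{12,3}=(\Delta\otimes 1)(\widetilde J)$, $\widetilde J_{1,23}=(1\otimes\Delta)(\widetilde J)$, $\widetilde J_{1,2}=\widetilde J\otimes 1$, $\widetilde J_{2,3}=1\otimes\widetilde J$. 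Hence the coherence identity is precisely the associator twist relation $\Phi'=J_{2,3}J_{1,23}\Phi J_{12,3}^{-1}J_{1,2}^{-1}$, pulled back along $\Theta^{-1}$ and realized inside $Rep(A)$. The subtle bookkeeping here is tracking which tensor-with-identity produces which leg --- $(\Delta\otimes 1)$ for $J_{V\otimes W,U}$ versus $(1\otimes\Delta)$ for $J_{V,W\otimes U}$ --- and noting that $\Phi'_{F(V),F(W),F(U)}$ is computed from $(\Theta^{-1})^{\otimes 3}(\Phi')$; once this is laid out, the identity is literally the defining equation of the twist. (With the opposite sign convention for $\Delta'$, or equivalently taking $J_{V,W}$ in the reverse direction, one obtains the same relation with $\widetilde J$ replaced by $\widetilde J^{-1}$; this is the standard lax-versus-oplax ambiguity and does not affect the conclusion.)

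Putting this together, $(F,J)$ is a tensor functor; since $F$ is moreover an equivalence --- in fact an isomorphism --- of the underlying categories, it is a tensor equivalence, and therefore $Rep(A)$ and $Rep(A')$ are tensor equivalent, as claimed. The only genuine obstacle is the coherence computation of the preceding paragraph, which is purely formal but notation-heavy; every other step is a direct unwinding of definitions.
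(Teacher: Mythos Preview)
Your proof is correct and follows essentially the same approach as the paper's. The paper merely sketches the argument after the statement: it notes that $\Theta$ induces the equivalence of categories and that $J_{V,W}\colon x\otimes y\mapsto J(x\otimes y)$ furnishes the tensor structure, without spelling out the $A'$-linearity, naturality, unit, or associativity checks that you carry out in detail; your computation verifying the associativity coherence from the twist relation $\Phi'=J_{2,3}J_{1,23}\Phi J_{12,3}^{-1}J_{1,2}^{-1}$ is exactly the content the paper leaves implicit by citing \cite[Proposition~13.3]{ES}.
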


Indeed if $(\Theta,J)$ is an equivalence of quasi-bialgebras $(A,\Delta,\varepsilon, \Phi)$ and $(A',\Delta',\varepsilon', \Phi')$, then $\Theta$ induces an equivalence of categories $\mathcal{F}:Rep(A)\to Rep(A')$ and

$$ J_{V,W}:F(X)\otimes F(Y)\to F(X\otimes Y)\ ,$$
$$ x\otimes y\mapsto J(x\otimes y)$$
induces an isomorphism of the tensor product functors for $Rep(A)$ and $Rep(A')$.

We next introduce   quasi-associative module algebras. Note that our definition of  quasi-associative algebras differs from the usual notion.

\begin{definition}
Let  $(A,\Delta,\varepsilon, \Phi)$ be a quasi-bialgebra.
A  quasi-associative $(A,\Delta,\varepsilon, \Phi)$-module algebra
is a quasi-bialgebra module  together with a map
$\mu:  M\otimes M\to A$ called the multiplication, satisfying the following relations:

$$
 \begin{CD}
(M\otimes M) \otimes M@>(Id\otimes \mu)\Phi>>M\otimes M  \\
@ V(\mu\otimes Id) VV @VV\mu V\\
M \otimes M@>\mu>>  M
\end{CD}$$

Moreover, the $A$ action satisfies
$$a.(\mu(m_1 \otimes m_2)=\mu(\Delta(a)(m_1\otimes m_2)\ .$$
\end{definition}

Note the following obvious fact.

\begin{lemma}
\label{le:bi-ass}
Let $(A,\Delta, \varepsilon)$ be a bialgebra; i.e., the associator satisfies $\Phi=1^{\otimes 3}\in A^{\otimes 3}$. Then every $A$-module algebra is associative.
\end{lemma}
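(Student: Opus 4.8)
The plan is entirely formal: the hypothesis $\Phi=1^{\otimes 3}$ makes the associativity constraint of the monoidal category $Rep(A)$ trivial, so the prefix ``quasi-'' in ``quasi-associative module algebra'' becomes vacuous and the defining diagram is literally the associativity diagram.

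First I would record the one substantive (though trivial) point: in any $A$-module the unit $1\in A$ acts as the identity map, so the element $\Phi=1\otimes 1\otimes 1\in A^{\otimes 3}$ acts on $M\otimes M\otimes M$ as the identity; equivalently, the associativity isomorphism $\Phi_{M,M,M}:(M\otimes M)\otimes M\to M\otimes (M\otimes M)$ is, under the canonical identification of the two bracketings of the vector-space tensor product, the identity. Next I would take an arbitrary quasi-associative $(A,\Delta,\varepsilon,\Phi)$-module algebra $(M,\mu)$ and substitute $\Phi_{M,M,M}=Id$ into the commutative square defining quasi-associativity, whose content is the equality $\mu\circ(\mu\otimes Id)=\mu\circ(Id\otimes\mu)\circ\Phi_{M,M,M}$ of maps $M^{\otimes 3}\to M$; with $\Phi_{M,M,M}=Id$ this becomes $\mu\circ(\mu\otimes Id)=\mu\circ(Id\otimes\mu)$, i.e. associativity of $\mu$. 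Finally, the compatibility $a.\mu(m_1\otimes m_2)=\mu(\Delta(a)(m_1\otimes m_2))$ is precisely the statement that $\mu$ is a morphism of $A$-modules $M\otimes M\to M$, where $M\otimes M$ carries the tensor-product module structure; hence $(M,\mu)$ is an associative algebra object in $Rep(A)$ in the sense of the earlier definition of an algebra in a monoidal category, i.e. an associative $A$-module algebra.

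I do not expect any genuine obstacle. The only thing one must not skip is the remark that $1\in A$ acts as the identity on every module, which is part of the module axioms and is exactly what annihilates the associator; everything else is an unwinding of definitions. Alternatively, one could phrase the whole argument by invoking the proposition that $Rep(A)$ is monoidal with associativity isomorphisms given by $\Phi$, specialized to the case $\Phi=1^{\otimes 3}$, so that algebra objects in $Rep(A)$ are ordinary associative $A$-module algebras.
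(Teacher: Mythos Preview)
Your proposal is correct; the paper gives no proof at all, labeling the lemma an ``obvious fact,'' and your argument is exactly the natural unwinding of the definition of a quasi-associative module algebra with $\Phi=1^{\otimes 3}$ acting as the identity.
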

Denote by $Rep(A)_{alg}$ the category of  quasi-associative  module algebras over the quasi-bialgebra $(A,\Delta,\varepsilon, \Phi)$. We have the following fact.

\begin{proposition}
\label{pr:equiv-qamas}
Let $(A,\Delta,\varepsilon, \Phi)$ and $(A',\Delta',\varepsilon', \Phi')$ be equivalent quasi-bialgebras. Then there exists and equivalence of categories between $Rep(A)_{alg}$ and $Rep(A')_{alg}$.
\end{proposition}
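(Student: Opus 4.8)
The plan is to recognize that a quasi-associative $(A,\Delta,\varepsilon,\Phi)$-module algebra is exactly a monoid (semigroup) object in the monoidal category $Rep(A)$ — the defining pentagon diagram for $\mu$ is precisely the associativity axiom for a monoid in a monoidal category with associator $\Phi$ — and then to transport monoid objects along the monoidal equivalence $\mathcal{F}:Rep(A)\to Rep(A')$ furnished by the equivalence of quasi-bialgebras $(\Theta,J)$ (Proposition \ref{pr:equivofqba} and the discussion following it, with tensor structure $J_{V,W}(x\otimes y)=J(x\otimes y)$). Concretely: given $(M,\mu)\in Rep(A)_{alg}$, let $F(M)$ be $M$ with the $A'$-action pulled back along $\Theta$, equipped with the multiplication $\mu':=\mu\circ J_{M,M}$, i.e. $\mu'(m_1\otimes m_2)=\mu(J(m_1\otimes m_2))$. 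The module-algebra compatibility $a'.\mu'(m_1\otimes m_2)=\mu'(\Delta'(a')(m_1\otimes m_2))$ is immediate from $\Delta'=J^{-1}(\Theta\otimes\Theta)\Delta J$ and the fact that $\mu$ is already $A$-linear: apply $\mu$ to the identity $J\,\Delta'(a')=(\Theta\otimes\Theta)\Delta(a)\,J$ evaluated on $m_1\otimes m_2$.

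The substantive step — and the one I expect to be the main (though entirely bookkeeping) obstacle — is verifying quasi-coassociativity of $\mu'$ against the new associator:
$$\mu'\circ(\mu'\otimes Id)=\mu'\circ(Id\otimes\mu')\circ\Phi'\qquad\text{on }(F(M)\otimes F(M))\otimes F(M).$$
Unwinding the definitions, this is an identity of operators on $M^{\otimes 3}$. I would expand the left side as $\mu\circ J\circ(\mu\circ J\otimes Id)$, use the $A$-module-algebra relation $\mu\circ(\mu\otimes Id)=\mu\circ(Id\otimes\mu)\circ\Phi$ to eliminate the inner products on both sides, and then observe that the remaining product of $J$-legs on the two sides is reconciled exactly by the cocycle relation $\Phi'=J_{2,3}\,J_{1,23}\,\Phi\,J_{12,3}^{-1}\,J_{1,2}^{-1}$. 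The care required is purely that $Rep(A)$ is not strict, so one must keep precise track of which leg of $J$ ($J_{1,2}$, $J_{12,3}$, $J_{1,23}$, $J_{2,3}$) and which copy of $\Phi$ or $\Phi'$ enters at each stage; this is the standard verification that a monoidal functor carries monoids to monoids, specialized to the present situation.

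Finally I would check functoriality and that $F$ is an equivalence. A morphism $f:(M,\mu)\to(N,\nu)$ in $Rep(A)_{alg}$ is, after pullback along $\Theta$, an $A'$-linear map, and naturality of the tensor structure $J_{V,W}$ forces $f$ to intertwine $\mu'$ and $\nu'$; hence $F:Rep(A)_{alg}\to Rep(A')_{alg}$ is well defined on morphisms. Applying the same construction to a quasi-inverse equivalence of quasi-bialgebras — namely $(\Theta^{-1},(\Theta^{-1}\otimes\Theta^{-1})(J)^{-1})$ — produces a functor $G:Rep(A')_{alg}\to Rep(A)_{alg}$, and the natural isomorphisms $\mathcal{F}G\cong Id$ and $G\mathcal{F}\cong Id$ coming from Proposition \ref{pr:equivofqba} are automatically morphisms of module algebras, again by naturality of $J$. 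Equivalently, one notes that $\mathcal{F}$ is fully faithful on underlying modules and a map commutes with $\mu$ if and only if its image commutes with $\mu'=\mu\circ J$, so $F$ is fully faithful, while essential surjectivity follows from the existence of $G$. This exhibits the desired equivalence between $Rep(A)_{alg}$ and $Rep(A')_{alg}$.
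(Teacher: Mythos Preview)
Your proposal is correct and follows essentially the same approach as the paper: the paper's proof simply invokes the monoidal equivalence $Rep(A)\simeq Rep(A')$ from Proposition~\ref{pr:equivofqba} and then asserts that the equivalence of $Rep(A)_{alg}$ and $Rep(A')_{alg}$ ``follows from functoriality.'' Your write-up is a detailed unpacking of exactly this step --- identifying quasi-associative module algebras as monoid objects, transporting the multiplication via $\mu'=\mu\circ J$, and checking the cocycle relation for $\Phi'$ gives the new associativity --- so the content is the same, only made explicit.
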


\begin{proof}
The categories $Rep(A)$ and $Rep(A')$ of $(A,\Delta,\varepsilon, \Phi)$ (resp. $(A',\Delta',\varepsilon', \Phi')$)-modules, are equivalent as monoidal categories by Proposition \ref{pr:equivofqba}. The equivalence of  $Rep(A) _{alg}$ and $Rep(A') _{alg}$ now follows from functoriality.
\end{proof}
 \subsection{Quantizations of bi-,Hopf- and Lie bialgebras}

 In this section we will discuss the definitions of quantizations for various algebraic objects. First recall that a {\it topological}  algebra, bialgebra or Hopf algebra is an algebra, bialgebra or Hopf algebra in the category of topological spaces, which means that multiplication and, where appropriate, comultiplication, unit, counit and antipode are continuous maps with respect to the topology.

 We now define a topology on unital algebras over $k[[h]]$ following \cite[ch 3.1]{EK}. Let $A$ be a unital algebra over $k[[h]]$ and let $I\subset A$ be a proper two-sided ideal  in $A$ containing $h\in A$.
The ideal $I$ defines a translation invariant topology on $A$ for which the ideals $\{I^n:n\ge0\}$ form a basis of neighborhoods of $0$. We say that $A$ is {\it topological} if it is complete in this topology and $A/h^n A$ is a free $k[h]/(h)^n$-module for each $n\ge1$.

We now have to define the tensor product of two topological algebras.
Let $A$ and $B$ be two topological algebras and $I,J$ the corresponding ideals. Define $A\otimes B$ to be the projective limit of algebras $A/I^n\otimes_{k[h]/h^n} B/J^n$ for $n\to \infty$. The  ideal $I\otimes B+A\otimes J$ gives $A\otimes B$ the structure of a topological algebra.

From now on, the terms topological bi-or Hopf algebras will denote  bi-or Hopf algebras  over $k[[h]]$ which are topological with respect to the topology defined above.

We now make the following definition due originally to Drinfeld.
\begin{definition}(see e.g. \cite[ch.3]{Ch-P}, \cite[ch.8.5]{ES})
\noindent (a) A quantization of a co-Poisson bialgebra $(A,\delta)$ (see Definition \ref{def:co-decoratedstuff}) is a topological bialgebra $A_h$   where $A_h/h A_h\cong A$ as a bialgebra and for $x_o \equiv x(mod\ h)$
$$ \delta(x_o)\equiv  h^-1(\Delta(x)-\Delta^{op}(x)) (mod\ h)\ ,$$
with $\Delta^{op}$ denoting the opposite comultiplication.

\noindent(b) A quantization of a co-Poisson Hopf algebra  $H$ is a topological Hopf algebra $H_h$, such that $H_h/hH_h\cong H$ as Hopf algebras and $H_h$ is a quantization of the co-Poisson bialgebra $H$.

\noindent(c) A quantization of a Lie bialgebra $\gg$ is  a quantization of the corresponding co-Poisson Hopf algebra $(U(\gg),\delta)$.

\noindent(d) A quantization $U_h(\gg)$ of a Lie bialgebra $\gg$ is called preferred if $U_h(\gg)=U(\gg)[[h]]$ as an algebra.
\end{definition}

  Etingof and Kazhdan show in  \cite{EK} that the universal enveloping algebra  $U(\gg)$ of any Lie bialgebra can be quantized with quantization $U_h(\gg)$.  Indeed, they prove the following result.

  \begin{theorem}\cite{EK}
  Let $\gg$ be a finite-dimensional Lie bialgebra and let $D(\gg)$ be its double. Then $D(\gg)$ admits a preferred quantization $U_h(D(\gg))$ and $U_h(D(\gg))$ contains a subalgebra $U_h(\gg)$ which is a quantization of $\gg$.
  \end{theorem}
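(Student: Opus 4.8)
The statement is the Etingof--Kazhdan quantization theorem applied to the double, so the plan is to run the Tannakian fiber-functor construction of \cite{EK}. First I would set up the data attached to $D(\gg)$: recall that $D(\gg)=\gg\oplus\gg^*$ is quasitriangular, with canonical $r$-matrix $r$ corresponding to the trace form on $\gg\otimes\gg^*$; put $t=r+r^{op}\in D(\gg)^{\otimes 2}$ for the symmetric invariant it determines, and fix a Drinfeld associator $\Phi=\Phi(t)$ coming from the Knizhnik--Zamolodchikov connection for $t$. These data turn the category $\mathcal M$ of topologically free $D(\gg)[[h]]$-modules into a braided monoidal category whose associativity constraint is $\Phi$ and whose braiding is $\beta=\tau\circ e^{ht/2}$ --- the Drinfeld category. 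The feature I would lean on constantly is that modulo $h$ one has $\Phi\equiv 1^{\otimes 3}$ and $\beta\equiv\tau$, so $\mathcal M$ degenerates to the ordinary symmetric category of $D(\gg)$-modules.

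Next I would introduce the two polarized Verma modules $M=U(D(\gg))\otimes_{U(\gg^*)}k$ and $M'=U(D(\gg))\otimes_{U(\gg)}k$ attached to the complementary isotropic subalgebras $\gg,\gg^*$; by the PBW theorem these are topologically free over $k[[h]]$, with $M\cong U(\gg)[[h]]$ and $M'\cong U(\gg^*)[[h]]$ as $k[[h]]$-modules, and I would assemble from $M$ and $M'$ the Etingof--Kazhdan fiber functor $F\colon\mathcal M\to\mathrm{Mod}_{k[[h]]}$ together with a tensor structure $J_{V,W}\colon F(V)\otimes F(W)\to F(V\otimes W)$ built by inserting copies of $\Phi$ and $\beta$. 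The heart of the argument, and the step I expect to be the main obstacle, is proving that $(F,J)$ is genuinely a braided tensor functor: the associativity and hexagon coherences, once unwound, turn into a package of identities for $\Phi$ --- exactly the pentagon and hexagon axioms of a Drinfeld associator together with the relations governing $t$ --- evaluated in the relevant representations, and the efficient way to verify them is to argue universally in the prop generated by a Lie bialgebra and then specialize to $D(\gg)$. Throughout, one must track topological freeness over $k[[h]]$ carefully, since flatness of the resulting quantization is precisely the assertion that these modules stay free.

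Granting that $(F,J)$ is a braided tensor functor, Tannakian reconstruction yields a topological bialgebra $U_h(D(\gg)):=\underline{\mathrm{End}}(F)$ and an equivalence of braided monoidal categories $\mathcal M\simeq\mathrm{Rep}(U_h(D(\gg)))$; the braiding provides a universal $R$-matrix, and a standard deformation of the antipode of $U(D(\gg))$ upgrades $U_h(D(\gg))$ to a quasitriangular Hopf algebra. Since $F$ does not change underlying $k[[h]]$-modules --- functorially $F(V)=V[[h]]$ --- the multiplication on $\underline{\mathrm{End}}(F)$ is the undeformed one, so $U_h(D(\gg))=U(D(\gg))[[h]]$ as an algebra, which is the ``preferred'' clause. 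Reducing mod $h$, where $F$ becomes the forgetful functor, identifies $U_h(D(\gg))/hU_h(D(\gg))\cong U(D(\gg))$ as Hopf algebras, and expanding the reconstructed coproduct to first order in $h$, with $\beta\equiv\tau+\frac h2 t+\cdots$, shows that $h^{-1}(\Delta-\Delta^{op})$ reduces mod $h$ to the co-Poisson cobracket on $U(D(\gg))$ attached to $\tilde\delta$; hence $U_h(D(\gg))$ is a quantization of $D(\gg)$ in the sense of the definition above.

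Finally, for the subalgebra $U_h(\gg)$ I would use that $\gg\hookrightarrow D(\gg)$ is a sub-Lie-bialgebra (with $\tilde\delta|_\gg=\delta$, as recorded above). Take $U_h(\gg):=F(M)$: the Verma module $M$ is a coalgebra object in $\mathcal M$ (it is a quotient coalgebra of $U(D(\gg))$, since $U(D(\gg))\gg^*$ is a coideal, $\gg^*$ being primitive), so $F(M)$ inherits a comultiplication, and via the fiber-functor formalism it acquires a compatible multiplication, making it a topological bialgebra; the mod-$h$ computation then shows $F(M)$ is a quantization of $U(\gg)$. The inclusion $\gg\hookrightarrow D(\gg)$ induces an injective algebra map $F(M)\hookrightarrow\underline{\mathrm{End}}(F)=U_h(D(\gg))$, which one recognizes as one factor of a quantum PBW decomposition $U_h(D(\gg))\cong U_h(\gg)\,\widehat\otimes\,U_h(\gg^*)$ lifting $U(D(\gg))=U(\gg)\otimes U(\gg^*)$; and one checks that the coproduct of $U_h(D(\gg))$ restricts to $U_h(\gg)$, which is exactly the statement that $\gg$ is a sub-Lie-bialgebra of $D(\gg)$. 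This exhibits $U_h(\gg)$ as the required subalgebra quantizing $\gg$. Everything past the tensor-functoriality of $F$ is bookkeeping with the $h$-adic topology and with first-order expansions; that single input is the real difficulty.
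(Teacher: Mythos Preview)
The paper does not supply its own proof of this theorem: it is stated as a citation of Etingof--Kazhdan \cite{EK} and used as a black box. So there is nothing to compare your argument against in the paper itself.

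That said, your sketch is an accurate outline of the Etingof--Kazhdan construction. The architecture --- Drinfeld category on $D(\gg)[[h]]$ built from a Lie associator, the two polarized Verma modules $M_\pm$ induced from the isotropic halves, the fiber functor $F$ with tensor structure $J$ assembled from $\Phi$ and the braiding, Tannakian reconstruction of $U_h(D(\gg))=\underline{\mathrm{End}}(F)$, and the identification of $U_h(\gg)$ with $F(M_+)$ equipped with its induced Hopf structure --- matches \cite{EK} faithfully, and you correctly flag the verification that $(F,J)$ is a tensor functor as the genuine technical core. Two minor caveats: the precise definition of $F$ in \cite{EK} is $F(V)=\mathrm{Hom}_{\mathcal M}(M_+\otimes M_-,V)$ rather than something left implicit, and the Hopf-algebra structure on $F(M_+)$ requires a bit more than ``$M$ is a coalgebra object'' --- one needs the full package of maps among $M_\pm$ and $M_\pm\otimes M_\pm$ constructed in \cite{EK}, Section~2. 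But these are refinements of a correct plan, not gaps.
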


  Note that $U_h(\gg)$ is not necessarily preferred.

  \subsection{The Drinfeld category}

  In this section we establish the connection between  certain quasi Hopf algebras and quantizations of Lie bialgebras.  Let $\gg$ be a finite-dimensional Lie bialgebra and $(U(\gg)[[h]], \Delta,\varepsilon)$ the linear extension of the bialgebra structure on $U(\gg)$ to $U(\gg)[[h]]$. There exists (see e.g. \cite[ch.15]{ES} a certain invertible element $\Phi_{KZ}\in U(\gg)[[h]]^{\otimes 3}$, called the $KZ$-associator, such that  $(U(\gg)[[h]], \Delta,\varepsilon,\Phi_{KZ})$ has the structure of a quasi-bialgebra. Moreover there exist  $R\in  U(\gg)[[h]]^{\otimes 2}$ invertible and an algebra antiautomorphism $\tilde S:U(\gg)[[h]]\to U(\gg)[[h]]$ such that $A=(U(\gg)[[h]], \Delta,\varepsilon,\Phi_{KZ}, \tilde S, R)$ has the structure of a quasitriangular quasi-Hopf algebra. Moreover, $R$ is the universal $R$-matrix for the quantization $U_h(\gg)$. The following result is due to Drinfeld.

  \begin{theorem} (Drinfeld \cite{DR3}, see also \cite[Ch.16]{ES})
  \label{th:twist-equivalence}
 If $\gg$ is a Lie bialgebra such that $\gg$ is semisimple as a Lie algebra, then
the quasitriangular quasi-Hopf algebras  $A$  and $(U_h(\gg), R)$ are twist equivalent.
  \end{theorem}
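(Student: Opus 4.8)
The plan is to reproduce Drinfeld's argument identifying the Drinfeld category with the category of $U_h(\gg)$-modules; concretely this means constructing a twist which trivializes the associator $\Phi_{KZ}$. Both $A=(U(\gg)[[h]],\Delta,\varepsilon,\Phi_{KZ},\tilde S,R)$ and $(U_h(\gg),R)$ are quantizations of the same classical datum---the Lie bialgebra $\gg$, equivalently its quasitriangular double $D(\gg)$ with its canonical $r$-matrix---both realized on the topologically free module $U(\gg)[[h]]$ and both congruent to the cocommutative Hopf algebra $U(\gg)$ modulo $h$; the only essential difference is the nontrivial associator $\Phi_{KZ}$ of $A$ versus the genuine coassociativity of $U_h(\gg)$. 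So, to exhibit an equivalence of quasi-bialgebras $(\Theta,J)$, it suffices to produce a counit-normalized invertible element $J\in U(\gg)[[h]]^{\otimes 2}$ with $J\equiv 1^{\otimes 2}\ (\mathrm{mod}\ h)$ for which the gauge action
\[\Phi_{KZ}\ \longmapsto\ J_{2,3}\,J_{1,23}\,\Phi_{KZ}\,J_{12,3}^{-1}\,J_{1,2}^{-1}\]
returns $1^{\otimes 3}$ while the twisted coproduct $J^{-1}\Delta J$ induces the cobracket of $\gg$ (which pins down the skew part of $J$ at order $h$); an algebra automorphism $\Theta\equiv\mathrm{id}\ (\mathrm{mod}\ h)$ matching the resulting Hopf algebra together with its $R$-matrix to $U_h(\gg)$ will then be supplied by rigidity.

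First I would organize the construction of $J$ as an order-by-order iteration. Writing $J=1^{\otimes 2}+\sum_{n\ge 1}h^{n}J^{(n)}$ and, using the $\gg$-equivariance of the entire construction, searching for $J$ inside the $\gg$-invariant part of the tensor powers of $U(\gg)[[h]]$, one notes first that $\Phi_{KZ}$ is automatically trivial modulo $h^{2}$. Assuming inductively that $J$ has been chosen up to order $h^{n-1}$ so that the gauge-transformed associator equals $1^{\otimes 3}$ modulo $h^{n}$ with order-$h^{n}$ coefficient $\phi_{n}$, the pentagon identity---linearized about the trivial associator---forces $\phi_{n}$ to be a cocycle in the deformation complex controlling associators, and altering $J^{(n)}$ changes $\phi_{n}$ by a coboundary. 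Thus the obstruction to carrying the induction one step further is the cohomology class of $\phi_{n}$.

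The crucial point---and where the hypothesis that $\gg$ is semisimple enters---is that this obstruction vanishes. After the standard reductions (the Poincar\'e--Birkhoff--Witt coalgebra isomorphism $U(\gg)\cong S(\gg)$ and the decomposition of the relevant cochain spaces into finite-dimensional $\gg$-modules), the cohomology in which $\phi_{n}$ lives is built out of the Chevalley--Eilenberg groups $H^{1}(\gg,M)$ and $H^{2}(\gg,M)$ for finite-dimensional $\gg$-modules $M$, and these vanish by Whitehead's lemmas since $\gg$ is semisimple. This cohomological analysis is the technical heart of the proof and the main obstacle: identifying precisely which complex governs the deformation, verifying that $\phi_{n}$ indeed lands in the part annihilated by the Whitehead vanishing, and organizing the bookkeeping so that the $J^{(n)}$ selected at successive stages are mutually consistent and the limit $J$ exists in $U(\gg)[[h]]^{\otimes 2}$.

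Granting the vanishing, the induction produces $J$ with $\Phi_{KZ}$ gauged to $1^{\otimes 3}$, so $(U(\gg)[[h]],\,J^{-1}\Delta J,\,\varepsilon,\,S',\,J_{21}RJ^{-1})$ is an honest quasitriangular Hopf algebra quantizing $\gg$ with the prescribed cobracket. By the uniqueness part of the Etingof--Kazhdan quantization---equivalently, one further application of the same $H^{1}$ and $H^{2}$ vanishing, now removing the ambiguity in the coproduct and antipode of a quantization---this Hopf algebra is carried onto $U_h(\gg)$ by an algebra automorphism $\Theta$ of $U(\gg)[[h]]$ with $\Theta\equiv\mathrm{id}\ (\mathrm{mod}\ h)$, and under $\Theta$ the element $J_{21}RJ^{-1}$ is sent to the universal $R$-matrix of $U_h(\gg)$ by rigidity of quasitriangular structures. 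Then $(\Theta,\,(\Theta\otimes\Theta)(J))$ is the desired equivalence of quasi-bialgebras, and the theorem follows.
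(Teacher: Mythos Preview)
The paper does not actually prove this theorem: it is stated as a citation of Drinfeld \cite{DR3} (with the exposition in \cite[Ch.~16]{ES}) and is used as a black box in the subsequent arguments (notably in the proof of Theorem~\ref{th:qia} and Proposition~\ref{pr:co-boundary-double}). So there is no ``paper's own proof'' to compare against.

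That said, your outline is the correct one and is faithful to Drinfeld's original argument. The strategy of building the twist $J$ order by order, identifying the obstruction at each stage as a class in the co-Hochschild complex of $U(\gg)$, reducing via PBW to Chevalley--Eilenberg cohomology, and invoking Whitehead's lemmas $H^1(\gg,M)=H^2(\gg,M)=0$ for semisimple $\gg$ to kill the obstruction, is exactly how the proof goes; the final uniqueness step (matching the resulting Hopf algebra with $U_h(\gg)$ by a further rigidity argument) is likewise standard. If you want to turn this into a complete proof rather than a sketch, the place requiring the most care is the precise identification of the deformation complex and the verification that the obstruction class really sits in the piece controlled by $H^1$ and $H^2$ (this is worked out in detail in \cite{DR3} and \cite[Ch.~16]{ES}); everything else in your outline is accurate as stated.
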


  Denote by $\mathcal{D}$ the {\it Drinfeld category}. Objects of $\mathcal{D}$ are $U(\gg)$-modules and the morphisms between any two objects $V,W\in Ob(\mathcal{D})$ are given by

 $$Hom_\mathcal{D} (V,W)=Hom_{U(\gg)}(V,W)\otimes k[[h]]\ .$$
 Note that $\mathcal D$ is a full subcategory of $Rep(A)$. The following result is due to Drinfeld.

 \begin{theorem} (Drinfeld, see e.g. \cite[ch.16.5 ]{ES})
 Let $\gg$ be a semisimple Lie bialgebra. Then the category of $U_h(\gg)$-modules and the Drinfeld category are  equivalent as braided monoidal categories.
 \end{theorem}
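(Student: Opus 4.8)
The plan is to deduce the statement from Drinfeld's twist equivalence (Theorem~\ref{th:twist-equivalence}) together with the functoriality of representation categories under equivalences of quasi-bialgebras (Proposition~\ref{pr:equivofqba}). Recall that the Drinfeld category $\mathcal{D}$ is the full subcategory of $Rep(A)$, where $A=(U(\gg)[[h]],\Delta,\varepsilon,\Phi_{KZ},\tilde S,R)$, whose objects are the topologically free deformation modules $V[[h]]$ with $V$ a $U(\gg)$-module, and whose monoidal and braided structure is inherited from $Rep(A)$: the associativity constraint is $\Phi_{KZ}$ and the braiding is $\tau\circ R$. Accordingly, by ``the category of $U_h(\gg)$-modules'' in the statement one means the category of topologically free $k[[h]]$-modules carrying a $U_h(\gg)$-action, with braiding induced by the universal $R$-matrix; it is this category that we must match with $\mathcal{D}$.

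First I would invoke Theorem~\ref{th:twist-equivalence}: since $\gg$ is semisimple, the quasitriangular quasi-Hopf algebra $A$ and $(U_h(\gg),R)$ are twist equivalent, so there is an equivalence of quasi-bialgebras $(\Theta,J)$ with $\Theta$ an algebra isomorphism (the identity, for the preferred quantization) and $J$ an invertible twist for which the associator $\Phi_{KZ}$ becomes trivial and the quasitriangular structure of $A$ is carried to that of $U_h(\gg)$ by the standard twisting formula. By Proposition~\ref{pr:equivofqba} and the discussion following it, $(\Theta,J)$ induces a monoidal equivalence $\mathcal{F}\colon Rep(A)\to Rep(U_h(\gg))$ with tensor structure the natural isomorphism $J_{V,W}\colon \mathcal{F}(V)\otimes\mathcal{F}(W)\to\mathcal{F}(V\otimes W)$, $x\otimes y\mapsto J(x\otimes y)$.

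Next I would upgrade $\mathcal{F}$ to a braided equivalence. Writing $\beta^A_{V,W}=\tau\circ R$ for the braiding of $Rep(A)$ and $\beta$ for that of $Rep(U_h(\gg))$, the twisting relation between the two $R$-matrices gives, after a direct manipulation of the flip $\tau$, of $R$, and of $J$, the compatibility square
$$\mathcal{F}(\beta^A_{V,W})\circ J_{V,W}=J_{W,V}\circ\beta_{\mathcal{F}(V),\mathcal{F}(W)}\ ,$$
which is exactly the assertion that $(\mathcal{F},J)$ respects the braidings; this is the routine quasitriangular refinement of Proposition~\ref{pr:equivofqba}. Finally I would restrict $\mathcal{F}$ to $\mathcal{D}$. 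Since $\Theta$ is an algebra isomorphism and $J\equiv 1\otimes 1\ (mod\ h)$, the functor $\mathcal{F}$ sends a topologically free $A$-module $V[[h]]$ to a topologically free $U_h(\gg)$-module with classical limit $V$; it is therefore essentially surjective onto the category of topologically free $U_h(\gg)$-modules, and on morphism spaces it induces $Hom_{\mathcal{D}}(V,W)=Hom_{U(\gg)}(V,W)[[h]]\to Hom_{U_h(\gg)}(\mathcal{F}V,\mathcal{F}W)$, which is a bijection by flatness over $k[[h]]$ together with the fact that $\mathcal{F}$ is already an equivalence on $Rep$. Hence $\mathcal{D}$ and the category of $U_h(\gg)$-modules are equivalent as braided monoidal categories.

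The main obstacle I expect is twofold: first, making precise and matching the classes of objects on the two sides, so that ``$U_h(\gg)$-modules'' is read as topologically free $k[[h]]$-modules, which is precisely the essential image of $\mathcal{D}$; and second, verifying the braiding compatibility square honestly while keeping track of where $\Phi_{KZ}$ enters on the $Rep(A)$ side. It is exactly the nontriviality of $\Phi_{KZ}$ that the twist $J$ is engineered to absorb, which is why semisimplicity of $\gg$, entering through Theorem~\ref{th:twist-equivalence}, is indispensable.
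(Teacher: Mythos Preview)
The paper does not give its own proof of this theorem: it is stated as a known result due to Drinfeld and referred to \cite[ch.~16.5]{ES}. There is therefore no in-paper argument to compare against.

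That said, your outline is the standard deduction of this equivalence from the ingredients the paper does record, namely Theorem~\ref{th:twist-equivalence} and Proposition~\ref{pr:equivofqba}, and it is essentially correct. The logic ``twist equivalence of quasitriangular quasi-Hopf algebras $\Rightarrow$ equivalence of braided monoidal module categories'' is exactly how the result is obtained in the references. Two small points where you would want to be more careful in a full write-up: first, Proposition~\ref{pr:equivofqba} as stated in the paper only asserts a \emph{monoidal} equivalence, so the braiding-compatibility square you write down is an additional (though routine) verification using the formula $R'=(J^{op})^{-1}RJ$ for how $R$ transforms under twist; second, the identification of the essential image of $\mathcal{D}$ with the topologically free $U_h(\gg)$-modules, and the matching of Hom-spaces, uses that $\Theta$ is an isomorphism of underlying $k[[h]]$-algebras and that the modules in question are free over $k[[h]]$, which you invoke but do not fully spell out. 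These are the expected technicalities and your identification of them as the ``main obstacles'' is accurate.
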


\section{Appendix 2: The Classical Limit}
\label{se:appendix}
       All of the results in this section are either well known or proved in \cite{BZ}. For a more detailed discussion of the classical limit we refer the reader to \cite[Section 3.2]{BZ}.

We  will first introduce the notion of an {\it almost equivalence of categories}:

\begin{definition}
We say that a functor $F:\mathcal C\to \mathcal D$ is  an {\it almost equivalence} of $\mathcal C$ and $\mathcal D$ if:

\noindent (a) for any objects $c,c'$ of $\mathcal C$ an isomorphism $F(c)\cong F(c')$ in $\mathcal D$ implies that $c\cong c'$ in  ${\mathcal C}$;

\noindent (b) for any object in $d$ there exists an object $c$ in ${\mathcal C}$ such that $F(c)\cong d$ in ${\mathcal D}$.

\end{definition}

Denote by $\overline \OO_f$ the full (tensor) sub-category of $U(\gg)-Mod$, whose objects $\overline V$ are finite-dimensional $U(\gg)$-modules having a weight decomposition $\overline V=\oplus_{\mu\in P} \overline V(\mu)$.  The following fact will be the first result of this section.

\begin{proposition}\cite[Cor 3.22]{BZ}
\label{pr:almost equivalent O bar O}
The categories  $\OO_f$ and  $\overline \OO_f$ are almost equivalent. Under this almost equivalence a simple $U_q(\gg)$-module $V_\lambda$ is mapped to the simple $U(\gg)$-module $\overline V_\lambda$.
\end{proposition}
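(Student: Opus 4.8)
The plan is to write down the classical-limit functor $F\colon\OO_f\to\overline\OO_f$ explicitly and then verify the two defining properties of an almost equivalence, the essential inputs being complete reducibility of both categories together with the fact that the Weyl character formula holds verbatim on each side. First I would recall from Section~\ref{se:appendix} the integral form $U_A(\gg)\subset U_q(\gg)$, where $A$ is the localization of $k[q,q^{-1}]$ at $q-1$ (equivalently the $k[[h]]$-picture with $q=e^h$). For each object $V$ of $\OO_f$ fix a $U_A(\gg)$-stable $A$-lattice $V_A$ having an $A$-basis of weight vectors, and set $F(V)=\overline V:=V_A/(q-1)V_A$; this is a finite-dimensional $U(\gg)$-module carrying the weight decomposition inherited from $V_A$. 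A $U_q(\gg)$-linear map $f\colon V\to W$, after rescaling by the appropriate power of $q-1$ so that it carries $V_A$ into $W_A$ but not into $(q-1)W_A$, reduces modulo $q-1$ to a $U(\gg)$-linear map; with the normalisation of \cite[Section~3.2]{BZ} this makes $F$ a functor into $\overline\OO_f$.

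The crucial observation is that $F$ preserves characters: because $V_A$ has a weight basis, $\dim_{\CC}\overline V(\mu)=\dim_{k(q)}V(\mu)$ for every $\mu\in P(\gg)$, so $\operatorname{ch} F(V)=\operatorname{ch} V$ in $\ZZ[P(\gg)]$, and in particular this is independent of the choice of lattice. Applying this to a simple module $V_\lambda$, $\lambda\in P^+(\gg)$, gives $\operatorname{ch} F(V_\lambda)=\operatorname{ch} V_\lambda=\operatorname{ch}\overline V_\lambda$, the last equality being the Weyl character formula (valid for $U_q(\gg)$ at generic $q$ and for $U(\gg)$ alike). Since $\overline\OO_f$ is semisimple by Weyl's theorem and a finite-dimensional $U(\gg)$-module is determined up to isomorphism by its character, we conclude $F(V_\lambda)\cong\overline V_\lambda$, which is the second assertion of the proposition.

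It remains to verify (a) and (b) in the definition of almost equivalence. For (a): if $F(V)\cong F(V')$ then $\operatorname{ch} V=\operatorname{ch} F(V)=\operatorname{ch} F(V')=\operatorname{ch} V'$, and since $\OO_f$ is semisimple (complete reducibility of finite-dimensional $U_q(\gg)$-modules with weight decomposition at generic $q$) equality of characters forces $V\cong V'$. For (b): any $\overline V$ in $\overline\OO_f$ decomposes as $\overline V\cong\bigoplus_\lambda\overline V_\lambda^{\oplus m_\lambda}$, and then $V:=\bigoplus_\lambda V_\lambda^{\oplus m_\lambda}$ lies in $\OO_f$ with $F(V)\cong\bigoplus_\lambda F(V_\lambda)^{\oplus m_\lambda}\cong\bigoplus_\lambda\overline V_\lambda^{\oplus m_\lambda}\cong\overline V$.

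The part demanding genuine care is the construction of $F$ at the integral level: one needs that every object of $\OO_f$ admits a $U_A(\gg)$-stable lattice with a weight basis, that its reduction at $q=1$ is a module over all of $U(\gg)$ (and not merely over the specialised integral form) of the expected dimension, and that the rescaling of morphisms is compatible with composition so that $F$ is honestly functorial. These points rest on the PBW theorem for $U_q(\gg)$ and the structure of $U_A(\gg)$ recorded in the appendix; once they and the two semisimplicity statements are granted, the argument above is routine and parallels \cite[Section~3.2]{BZ}.
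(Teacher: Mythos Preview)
The paper does not supply its own proof of this proposition; it simply records the statement with the citation \cite[Cor.~3.22]{BZ} and relies on the $({\bf k},{\bf A})$-module machinery assembled in the surrounding appendix (Lemmas \ref{le:specialization functor}, \ref{le:Hopf modules}, \ref{le:Hopf pair}). Your argument is exactly the concrete unpacking of that machinery specialized to $\UU_q(\gg)$: the functor you build is the functor $\FF$ of Lemma \ref{le:specialization functor} restricted to $\UU_q(\gg)$-modules, and your character-preservation step together with semisimplicity on both sides is precisely how one verifies properties (a) and (b) for the restriction to $\OO_f$. So your approach is correct and matches the intended one.

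One small caveat worth tightening: the rescaling-of-morphisms step you describe does not literally produce a functor (the normalisation by powers of $q-1$ is not compatible with composition in general), and indeed the paper's framework sidesteps this by working in the category $({\bf k},{\bf A})\text{-}Mod$ whose morphisms are already required to preserve the lattice, so that no rescaling is needed and $\FF$ is honestly functorial. For the almost equivalence of $\OO_f$ and $\overline\OO_f$ you only need the assignment on objects together with the character argument you give, so this does not affect the validity of your proof of the proposition as stated; but if you want $F$ to be a genuine functor you should phrase it as the paper does, via the intermediate category of pairs $(V,L)$.
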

Let $V\cong \bigoplus_{i=1}^{n} V_{\lambda_{i}}  \in \OO_{f}$.  We call $\overline V \cong \bigoplus_{i=1}^{n} \overline V_{\lambda_{i}}  \in \overline \OO_{f}$ the {\it classical limit} of $V$ under the above almost equivalence.

 First, we have to introduce the notion of  $({\bf k},{\bf A})$-algebras and investigate their properties.
Let ${\bf k}$ be a field and ${\bf A}$ be a local subring of ${\bf k}$. Denote by $\mm$ the only maximal ideal in ${\bf A}$ and by $\tilde {\bf k}$ the  residue field of ${\bf A}$, i.e., $\tilde {\bf k}:={\bf A}/\mm$.

We say that an ${\bf A}$-submodule $L$ of a ${\bf k}$-vector space $V$ is an {\it ${\bf A}$-lattice} of $V$ if  $L$ is a free ${\bf A}$-module and ${\bf k}\otimes_{\bf A} L=V$, i.e., $L$ spans $V$ as a ${\bf k}$-vector space. Note that for any ${\bf k}$-vector space $V$ and any ${\bf k}$-linear basis ${\bf B}$ of $V$ the ${\bf A}$-span $L={\bf A}\cdot {\bf B}$ is an ${\bf A}$-lattice in $V$. Conversely, if $L$ is an ${\bf A}$-lattice in $V$, then any ${\bf A}$-linear basis ${\bf B}$ of $L$ is also a ${\bf k}$-linear basis of $V$.

Denote by $({\bf k},{\bf A})-Mod$ the  category whose objects are pairs $\VV=(V,L)$ of a ${\bf k}$-vector space $V$ and an ${\bf A}$-lattice $L\subset V$  of $V$; an arrow $(V,L)\to (V',L')$ is any ${\bf k}$-linear map $f:V\to V'$ such that $f(L)\subset L'$.

Clearly, $({\bf k},{\bf A})-Mod$ is an Abelian category. Moreover, $({\bf k},{\bf A})-Mod$ is ${\bf A}$-linear because each  $Hom(\UU,\VV)$ in $({\bf k},{\bf A})-Mod$ is an ${\bf A}$-module.

It can be easily verified that $({\bf k},{\bf A})-Mod$ is a symmetric tensor category (\cite[Lemma 3.14]{BZ}.
We have the following fact.
\begin{lemma}\cite[Lemma 3.12]{BZ}
\label{le:trafo of bases} The forgetful functor  $({\bf k},{\bf A})-Mod \to  {\bf k}-Mod$ given by $(V,L)\mapsto V$ is an almost equivalence of symmetric tensor categories.
\end{lemma}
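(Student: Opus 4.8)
The plan is to verify the two defining properties of an almost equivalence directly for the forgetful functor $F:({\bf k},{\bf A})-Mod\to {\bf k}-Mod$, $(V,L)\mapsto V$, and then observe that $F$ is strictly symmetric monoidal, which upgrades the statement to the tensor setting.

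First I would dispose of the essential-surjectivity condition (b). Given a ${\bf k}$-vector space $V$, pick any ${\bf k}$-linear basis ${\bf B}$ of $V$; as recalled above, the ${\bf A}$-span $L={\bf A}\cdot{\bf B}$ is an ${\bf A}$-lattice in $V$, so $\VV=(V,L)$ is an object of $({\bf k},{\bf A})-Mod$ whose image under $F$ is literally equal to $V$. Hence (b) holds, even with isomorphism replaced by equality.

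Next, the condition (a). Suppose $(V,L)$ and $(V',L')$ have isomorphic images under $F$, i.e. $\dim_{\bf k}V=\dim_{\bf k}V'$. Since $L$ is a free ${\bf A}$-module with ${\bf k}\otimes_{\bf A}L=V$, any ${\bf A}$-basis ${\bf B}$ of $L$ is simultaneously a ${\bf k}$-basis of $V$, and likewise a chosen ${\bf A}$-basis ${\bf B}'$ of $L'$ is a ${\bf k}$-basis of $V'$. As $|{\bf B}|=\dim_{\bf k}V=\dim_{\bf k}V'=|{\bf B}'|$, any bijection ${\bf B}\to{\bf B}'$ extends to a ${\bf k}$-linear isomorphism $f:V\to V'$; by construction $f({\bf A}\cdot{\bf B})={\bf A}\cdot{\bf B}'$, that is $f(L)=L'$, so both $f$ and $f^{-1}$ are morphisms in $({\bf k},{\bf A})-Mod$ and $(V,L)\cong(V',L')$. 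This establishes (a), so $F$ is an almost equivalence of the underlying Abelian categories.

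Finally, for the symmetric tensor structure: the tensor product on $({\bf k},{\bf A})-Mod$ is $(V,L)\otimes(V',L')=(V\otimes_{\bf k}V',\,L\otimes_{\bf A}L')$ with the symmetry given by the permutation of factors (this is precisely the structure referred to above, cf. \cite[Lemma 3.14]{BZ}), and $F$ sends this to $V\otimes_{\bf k}V'$ with the permutation symmetry on ${\bf k}-Mod$. Thus $F$ is strictly symmetric monoidal, and combining this with the previous two paragraphs shows that $F$ is an almost equivalence of symmetric tensor categories. I expect no serious obstacle here; the only point requiring a little care is the bookkeeping with (possibly infinite) bases in (a) — one must invoke that an ${\bf A}$-basis of the lattice is a ${\bf k}$-basis of the ambient space and match cardinalities — but this is routine.
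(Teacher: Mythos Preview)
The paper does not actually give its own proof of this lemma; it merely cites \cite[Lemma 3.12]{BZ}. Your argument is correct and is precisely the natural one suggested by the remarks immediately preceding the lemma in the paper (that an ${\bf A}$-span of a ${\bf k}$-basis is a lattice, and that an ${\bf A}$-basis of a lattice is a ${\bf k}$-basis of the ambient space), so there is nothing to compare and nothing to fix.
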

Define a functor $\FF:({\bf k},{\bf A})-Mod \to \tilde {\bf k}-Mod$ by:
$$\FF(V,L)=L/\mm L\ $$
for any object
$(V,L)$ of $({\bf k},{\bf A})-Mod$ and for any morphism $f:(V,L)\to (V',L')$ we define
$\FF(f):L/\mm L\to L'/\mm L'$ to be a natural  $\tilde {\bf k}$-linear map.

\begin{lemma}\cite[Lemma 3.14]{BZ}
\label{le:specialization functor} $\FF:({\bf k},{\bf A})-Mod \to \tilde {\bf k}-Mod$ is a tensor functor and  almost equivalence.
\end{lemma}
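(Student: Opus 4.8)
The plan is to check, in order, that $\FF$ is a functor, that it carries the symmetric tensor structure of $({\bf k},{\bf A})-Mod$ to that of $\tilde{\bf k}-Mod$, and that it satisfies the two conditions in the definition of an almost equivalence. For the first point, if $f\colon(V,L)\to(V',L')$ is a morphism then $f(L)\subseteq L'$, hence $f(\mm L)\subseteq\mm L'$, so $f$ descends to a $\tilde{\bf k}$-linear map $L/\mm L\to L'/\mm L'$; compatibility with composition and identities is immediate. For the tensor structure, recall that the tensor product in $({\bf k},{\bf A})-Mod$ is $(V,L)\otimes(V',L')=(V\otimes_{\bf k}V',\,L\otimes_{\bf A}L')$ with unit object $({\bf k},{\bf A})$. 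Base change along the quotient map ${\bf A}\to\tilde{\bf k}={\bf A}/\mm$ commutes with $\otimes_{\bf A}$, which gives a natural isomorphism $J_{(V,L),(V',L')}\colon\FF(V,L)\otimes_{\tilde{\bf k}}\FF(V',L')\cong\FF\bigl((V,L)\otimes(V',L')\bigr)$, and $\FF({\bf k},{\bf A})={\bf A}/\mm=\tilde{\bf k}$ identifies the units.

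I would then verify that $(\FF,J)$ respects the associativity, unit and symmetry constraints. Since in both categories these constraints are the standard ones inherited from tensor products of modules over a commutative ring, this is a routine naturality check, which I would keep brief and cross-reference to the parallel verifications in \cite{BZ}. This shows $\FF$ is a symmetric tensor functor.

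It remains to prove $\FF$ is an almost equivalence. Because $L$ is a free ${\bf A}$-module, $\dim_{\tilde{\bf k}}(L/\mm L)=\mathrm{rank}_{\bf A}(L)=\dim_{\bf k}({\bf k}\otimes_{\bf A}L)=\dim_{\bf k}V$, and similarly for $(V',L')$; hence an isomorphism $\FF(V,L)\cong\FF(V',L')$ forces $\dim_{\bf k}V=\dim_{\bf k}V'$. Picking ${\bf A}$-bases of $L$ and of $L'$ of equal cardinality---which, as noted above, are simultaneously ${\bf k}$-bases of $V$ and of $V'$---and matching them produces a ${\bf k}$-linear isomorphism $V\to V'$ carrying $L$ onto $L'$, i.e. an isomorphism $(V,L)\cong(V',L')$ in $({\bf k},{\bf A})-Mod$; this is condition (a). For condition (b), given a $\tilde{\bf k}$-vector space $W$ I would fix a basis ${\bf C}$ of $W$, let $V$ be the ${\bf k}$-vector space with basis ${\bf C}$ and $L\subset V$ the ${\bf A}$-span of ${\bf C}$; then $(V,L)$ is an object of $({\bf k},{\bf A})-Mod$ with $\FF(V,L)=L/\mm L\cong W$.

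I do not expect a genuine obstacle in any of this; the most delicate point is the coherence of $(\FF,J)$ with the tensor constraints, which I would dispatch quickly by appealing to the module-theoretic nature of the constraints and to \cite{BZ}. The only place the hypotheses really enter is the identity $\dim_{\tilde{\bf k}}(L/\mm L)=\dim_{\bf k}V$, which uses that the lattice $L$ is free---equivalently, flat---over the local ring ${\bf A}$, and this is exactly what makes condition (a) go through.
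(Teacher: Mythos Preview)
Your proposal is correct. Note, however, that the paper does not actually supply its own proof of this lemma: it is stated with a citation to \cite[Lemma 3.14]{BZ} and left at that. What you have written is a direct verification of the claim along entirely standard lines (functoriality by passing to quotients, tensor compatibility via base change $(-)\otimes_{\bf A}\tilde{\bf k}$, and the almost-equivalence conditions via rank/dimension counting using freeness of the lattices), which is presumably close to what \cite{BZ} does. One small remark: when you invoke $\dim_{\tilde{\bf k}}(L/\mm L)=\dim_{\bf k}V$ for condition (a), you should phrase this as equality of cardinalities of bases rather than literal dimensions, since the category $({\bf k},{\bf A})\text{-}Mod$ as defined in the paper is not restricted to finite-dimensional objects; your subsequent sentence about picking bases ``of equal cardinality'' already handles this, so the argument goes through unchanged.
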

Let $U$ be a ${\bf k}$-Hopf algebra and let $U_{\bf A}$ be a Hopf ${\bf A}$-subalgebra of $U$. This means that $\Delta(U_{\bf A})\subset U_{\bf A}\otimes_{\bf A} U_{\bf A}$ (where $U_{\bf A}\otimes_{\bf A} U_{\bf A}$ is naturally an ${\bf A}$-sub-algebra of $U\otimes_{\bf k} U$), $\varepsilon(U_{\bf A})\subset {\bf A}$, and $S(U_{\bf A})\subset U_{\bf A}$.
We will refer to the above pair  $\UU=(U,U_{\bf A})$ as to $({\bf k},{\bf A})$-{\it Hopf algebra} (please note that $U_{\bf A}$ is not necessarily a free ${\bf A}$-module, that is, $\UU$ is not necessarily a $({\bf k},{\bf A})$-module).

Given $({\bf k},{\bf A})$-Hopf algebra $\UU=(U,U_{\bf A})$, we say that an  object $\VV=(V,L)$ of $({\bf k}, {\bf A})-Mod$  is a $\UU$-module if $V$ is a $U$-module and $L$ is an $U_{\bf A}$-module.

Denote by $\UU-Mod$ the category which objects are $\UU$-modules  and arrows are those morphisms of $({\bf k}, {\bf A})$-modules which commute with the $\UU$-action.

 Clearly, for $({\bf k},{\bf A})$-Hopf algebra $\UU=(U,U_{\bf A})$ the category $\UU-Mod$ is a tensor (but not necessarily symmetric) category.

 For each $({\bf k},{\bf A})$-Hopf algebra $\UU=(U,U_{\bf A})$ we define $\overline \UU:=U_{\bf A}/\mm U_{\bf A}$. Clearly, $\overline \UU$ is a Hopf algebra over $\tilde {\bf k}={\bf A}/\mm$.

The following fact is obvious.

\begin{lemma}\cite[Lemma 3.15]{BZ}
\label{le:Hopf modules}
 In the notation of Lemma \ref{le:specialization functor}, for any   $({\bf k},{\bf A})$-Hopf algebra $\UU$ the functor $\FF$   naturally extends to a tensor functor
\begin{equation}
\label{eq:dequantization functor}
\UU-Mod\to \overline \UU-Mod \ .
\end{equation}

\end{lemma}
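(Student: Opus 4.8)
The plan is to observe that $\FF$ is nothing but reduction modulo the maximal ideal $\mm$, i.e.\ the functor $\tilde{\bf k}\otimes_{\bf A}(-)$ restricted to ${\bf A}$-lattices, and that this is a right-exact, strong monoidal functor (Lemma \ref{le:specialization functor}); once this is recorded, the extension to module categories is obtained by transporting all structure maps through it. First I would produce the $\overline\UU$-action on $\FF(\VV)=L/\mm L$ for a $\UU$-module $\VV=(V,L)$. The structure map $U_{\bf A}\otimes_{\bf A}L\to L$ is ${\bf A}$-linear, so applying $\tilde{\bf k}\otimes_{\bf A}(-)$ gives a $\tilde{\bf k}$-linear map $\tilde{\bf k}\otimes_{\bf A}(U_{\bf A}\otimes_{\bf A}L)\to L/\mm L$. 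Since $L$ is a free (hence flat) ${\bf A}$-module, base change identifies the source with $(U_{\bf A}/\mm U_{\bf A})\otimes_{\tilde{\bf k}}(L/\mm L)=\overline\UU\otimes_{\tilde{\bf k}}\FF(\VV)$, which is the required action. Its associativity and unitality follow by applying the right-exact monoidal functor $\tilde{\bf k}\otimes_{\bf A}(-)$ to the corresponding commutative diagrams for the $U_{\bf A}$-action on $L$, using that $U_{\bf A}$ is a Hopf ${\bf A}$-subalgebra, so that $\varepsilon(U_{\bf A})\subset{\bf A}$ and the unit of $\overline\UU$ is the reduction of that of $U_{\bf A}$.

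Next I would treat morphisms. If $f\colon(V,L)\to(V',L')$ is a morphism of $\UU$-modules, then $f(L)\subset L'$ and $f|_L$ is $U_{\bf A}$-linear, so $\FF(f)\colon L/\mm L\to L'/\mm L'$ is well defined and intertwines the $\overline\UU$-actions by naturality of $\tilde{\bf k}\otimes_{\bf A}(-)$ applied to the equivariance square for $f|_L$. Functoriality ($\FF(\mathrm{id})=\mathrm{id}$ and $\FF(g\circ f)=\FF(g)\circ\FF(f)$) is inherited from $\FF$ on $({\bf k},{\bf A})-Mod$, so the assignment is indeed a functor $\UU-Mod\to\overline\UU-Mod$.

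It then remains to check that the extended $\FF$ is a tensor functor. The tensor product $(V,L)\otimes(W,M)$ in $\UU-Mod$ is $(V\otimes_{\bf k}W,\ L\otimes_{\bf A}M)$ with $\UU$ acting through $\Delta$, and by hypothesis $\Delta(U_{\bf A})\subset U_{\bf A}\otimes_{\bf A}U_{\bf A}$. By Lemma \ref{le:specialization functor}, $\FF$ already carries a natural isomorphism $J\colon\FF(V,L)\otimes_{\tilde{\bf k}}\FF(W,M)\to\FF\big((V,L)\otimes(W,M)\big)$, namely $(L/\mm L)\otimes_{\tilde{\bf k}}(M/\mm M)\cong(L\otimes_{\bf A}M)/\mm(L\otimes_{\bf A}M)$, whose compatibility with the associativity and unit constraints is part of that lemma and is unaffected by the module structure. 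What must be verified is that $J$ is $\overline\UU$-equivariant, and this follows because the comultiplication on $\overline\UU$ is the reduction $\overline\Delta$ of $\Delta\colon U_{\bf A}\to U_{\bf A}\otimes_{\bf A}U_{\bf A}$, and the square relating the $U_{\bf A}$-action on $L\otimes_{\bf A}M$ to $\Delta$ and the actions on $L$ and $M$ reduces, under $\tilde{\bf k}\otimes_{\bf A}(-)$, precisely to the corresponding square for $\overline\UU$ acting on $\FF(V,L)\otimes\FF(W,M)$. The one point that requires any care — the main, though mild, obstacle — is the base-change identification $\tilde{\bf k}\otimes_{\bf A}(U_{\bf A}\otimes_{\bf A}L)\cong(U_{\bf A}/\mm U_{\bf A})\otimes_{\tilde{\bf k}}(L/\mm L)$, and its analogue for $\Delta$, because $U_{\bf A}$ need not be ${\bf A}$-free; this is handled by instead exploiting the freeness of the lattices $L$, $M$ and $L\otimes_{\bf A}M$, which makes tensoring with them over ${\bf A}$ exact. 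Everything else is a diagram chase obtained by applying the monoidal right-exact functor $\tilde{\bf k}\otimes_{\bf A}(-)$ to axioms already satisfied over ${\bf A}$.
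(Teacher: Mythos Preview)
Your argument is correct and is exactly the standard verification one would expect; the paper itself offers no proof, stating only ``The following fact is obvious'' and citing \cite[Lemma 3.15]{BZ}. One minor remark: the base-change identification $\tilde{\bf k}\otimes_{\bf A}(U_{\bf A}\otimes_{\bf A}L)\cong(U_{\bf A}/\mm U_{\bf A})\otimes_{\tilde{\bf k}}(L/\mm L)$ holds for arbitrary ${\bf A}$-modules (since $B\otimes_A(M\otimes_A N)\cong(B\otimes_A M)\otimes_B(B\otimes_A N)$ for any ring map $A\to B$), so your concern about $U_{\bf A}$ not being free is unnecessary, though harmless.
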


Now let ${\bf k}=\CC(q)$ and ${\bf A}$ be the ring of all those rational functions in $q$ which are defined at $q=1$. Clearly, ${\bf A}$ is a local PID with maximal ideal $\mm=(q-1){\bf A}$ (and, moreover, each ideal in ${\bf A}$ is of the form $\mm^n=(q-1)^n{\bf A}$). Therefore,  $\tilde {\bf k}:={\bf A}/\mm=\CC$.

Recall from Section \ref{se:appendix2} the definition of  the quantized universal enveloping algebra $U_{q}(\gg)$. Denote $h_\lambda=\frac{K_\lambda-1}{q-1}$ and let $U_{{\bf A}}(\gg)$ be the ${\bf A}$-algebra generated by all  $h_\lambda$, $\lambda\in P$  and all $E_ i, F_i$.

Denote by $\UU_{q}(\gg)$ the pair $(U_{q}(\gg), U_{{\bf A}}(\gg))$.

\begin{lemma}
\label{le:Hopf pair}
(a) The pair $\UU_{q}(\gg)=(U_{q}(\gg), U_{{\bf A}}(\gg))$ is a $({\bf k},{\bf A})$-Hopf algebra (\cite[Lemma 3.16]{BZ}).

\noindent(b) $\overline{\UU_q(\gg)}=U(\gg)$ (\cite[Lemma 3.17]{BZ}).
\end{lemma}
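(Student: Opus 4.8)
Since both statements are \cite[Lemmas 3.16 and 3.17]{BZ}, the plan is only to indicate the main steps. For part (a) I would check that the Hopf structure maps of $U_q(\gg)$ restrict to $U_A(\gg)$. The starting observation is that $K_\lambda = 1+(q-1)h_\lambda$ and $K_\lambda^{-1} = 1+(q-1)h_{-\lambda}$ both lie in $U_A(\gg)$ for every $\lambda\in P$, so $U_A(\gg)$ already contains all the grouplike elements $K_\lambda^{\pm 1}$. Then from $\Delta(K_\lambda)=K_\lambda\otimes K_\lambda$ one computes
\[
\Delta(h_\lambda)=\frac{K_\lambda\otimes K_\lambda-1\otimes 1}{q-1}=h_\lambda\otimes K_\lambda+1\otimes h_\lambda\ ,
\]
which lies in $U_A(\gg)\otimes_A U_A(\gg)$, and similarly $\Delta(E_i)$ and $\Delta(F_i)$ lie there in the conventions of Section \ref{se:appendix2} (their summands involve only $E_i,F_i,1,K_i^{\pm1}$). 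Since $\Delta$ is an algebra map and these elements generate $U_A(\gg)$, this gives $\Delta(U_A(\gg))\subset U_A(\gg)\otimes_A U_A(\gg)$. The remaining checks $\varepsilon(h_\lambda)=\varepsilon(E_i)=\varepsilon(F_i)=0\in A$ and $S(h_\lambda)=h_{-\lambda}$, $S(E_i)=-E_iK_i^{-1}$, $S(F_i)=-K_iF_i\in U_A(\gg)$ are immediate.

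For part (b) I would set $\overline{\UU_q(\gg)}=U_A(\gg)/(q-1)U_A(\gg)$ and show the natural map $U(\gg)\to\overline{\UU_q(\gg)}$ sending the Chevalley generators to the classes of $E_i,F_i,h_{\alpha_i}$ is an isomorphism. Surjectivity follows once one checks that the defining relations of $U_q(\gg)$ reduce modulo $(q-1)$ to the defining relations of $U(\gg)$: substituting $K_\lambda=1+(q-1)h_\lambda$ into $K_\lambda E_i=q^{(\lambda,\alpha_i)}E_iK_\lambda$ and dividing by $(q-1)$ gives $h_\lambda E_i-q^{(\lambda,\alpha_i)}E_ih_\lambda=[(\lambda,\alpha_i)]_q E_i$, which modulo $(q-1)$ becomes $[h_\lambda,e_i]=(\lambda,\alpha_i)e_i$; the relation $[E_i,F_j]=\delta_{ij}(K_i-K_i^{-1})/(q-q^{-1})$ reduces to $[e_i,f_j]=\delta_{ij}h_{\alpha_i}$ (using $h_{-\alpha_i}\equiv-h_{\alpha_i}\bmod(q-1)$); and the quantum Serre relations reduce to the classical Serre relations. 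Injectivity follows from the quantum PBW theorem: $U_A(\gg)$ is a free $A$-module with PBW basis given by ordered monomials in the root vectors and the $h_\lambda$, so $\overline{\UU_q(\gg)}$ has the corresponding $\CC$-basis, which the natural map carries bijectively onto the PBW basis of $U(\gg)$; a surjection of algebras taking a basis to a basis is an isomorphism.

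The main obstacle is the freeness of $U_A(\gg)$ over $A$ together with an explicit PBW basis, i.e.\ showing that every commutator of root vectors, re-expanded in the generators $h_\lambda,E_i,F_i$, has coefficients in $A$, so that no negative power of $(q-1)$ appears upon specialization at $q=1$. This is precisely where the choice of $h_\lambda=(K_\lambda-1)/(q-1)$ rather than $K_\lambda$ as a generator is essential. Granting this, which is the content of \cite[Lemma 3.16]{BZ}, both parts of the lemma follow as in \cite{BZ}.
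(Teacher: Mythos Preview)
The paper does not prove this lemma at all; it simply records the two assertions with citations to \cite[Lemmas 3.16 and 3.17]{BZ} and moves on. Your sketch is therefore not competing with anything in the paper, and what you outline is exactly the standard argument one finds in \cite{BZ} (or in any treatment of the De Concini--Kac integral form): check on generators that $\Delta,\varepsilon,S$ preserve $U_{\bf A}(\gg)$, and for part (b) verify that the relations specialize correctly and invoke a PBW-type freeness result to get injectivity.

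Two small remarks. First, your antipode formulas $S(E_i)=-E_iK_i^{-1}$, $S(F_i)=-K_iF_i$ do not match the conventions actually written in Section~\ref{se:appendix2} of this paper (which records $S(E_i)=-q_i^{-1}E_i$, $S(F_i)=-qF_i$); in either convention the values lie in $U_{\bf A}(\gg)$, so the conclusion is unaffected, but you should use the paper's conventions when citing it. Second, you correctly identify the only nontrivial input as the freeness/PBW statement for $U_{\bf A}(\gg)$ over ${\bf A}$; note that the paper's definition of a $({\bf k},{\bf A})$-Hopf algebra explicitly does \emph{not} require $U_{\bf A}$ to be free, so freeness is irrelevant for part (a) and is needed only to control the size of the quotient in part (b). With that granted from \cite{BZ}, your argument is complete.
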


We have the following fact.

\begin{lemma}
\label{le:class. limit}
$\UU_{q}(\gg)$ is a $\UU_{q}(\gg)$-module algebra under the adjoint action. Moreover the classical limit of  $\UU_{q}(\gg)$, as a $U(\gg)$-module algebra is $U(\gg)$.
\end{lemma}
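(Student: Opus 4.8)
The plan is to reduce everything to two inputs already available: the standard fact (used in the proof of Theorem~\ref{th:qia}) that a Hopf algebra acts on itself by the adjoint action $ad(h)(x)=h_{(1)}xS(h_{(2)})$ as a module algebra, and the dequantization functor of Lemma~\ref{le:Hopf modules} together with the identification $\overline{\UU_q(\gg)}=U(\gg)$ from Lemma~\ref{le:Hopf pair}(b).

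First I would treat the ${\bf k}$-level statement: applying the adjoint-action fact to $H=U_q(\gg)$ gives that $U_q(\gg)$ is a $U_q(\gg)$-module algebra. To promote this to the pair $\UU_q(\gg)=(U_q(\gg),U_{\bf A}(\gg))$, I would check that $U_{\bf A}(\gg)$ is stable under the adjoint action of $U_{\bf A}(\gg)$ on $U_q(\gg)$: by Lemma~\ref{le:Hopf pair}(a), $U_{\bf A}(\gg)$ is a Hopf ${\bf A}$-subalgebra, so $\Delta(U_{\bf A}(\gg))\subset U_{\bf A}(\gg)\otimes_{\bf A}U_{\bf A}(\gg)$ and $S(U_{\bf A}(\gg))\subset U_{\bf A}(\gg)$, and $U_{\bf A}(\gg)$ is closed under multiplication; hence $ad(h)(x)=h_{(1)}xS(h_{(2)})\in U_{\bf A}(\gg)$ whenever $h,x\in U_{\bf A}(\gg)$. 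Since $U_{\bf A}(\gg)$ is a free ${\bf A}$-module (a PBW-type argument for the integral form), this exhibits $\UU_q(\gg)$ as an algebra object of the tensor category $\UU_q(\gg)-Mod$.

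Next I would apply the tensor functor $\UU_q(\gg)-Mod\to\overline{\UU_q(\gg)}-Mod$ of Lemma~\ref{le:Hopf modules} (the functor \eqref{eq:dequantization functor}) to this algebra object. Being a tensor functor, it carries module algebras to module algebras, so the image is an $\overline{\UU_q(\gg)}$-module algebra; by Lemma~\ref{le:Hopf pair}(b) the acting Hopf algebra is $\overline{\UU_q(\gg)}=U(\gg)$, and the underlying algebra of the image is $U_{\bf A}(\gg)/\mm U_{\bf A}(\gg)=U(\gg)$ as well. It then remains to identify the resulting $U(\gg)$-action on $U(\gg)$ as the classical adjoint action: since the comultiplication and antipode of $U_{\bf A}(\gg)$ reduce modulo $\mm$ to the standard $\Delta$ and $S$ on $U(\gg)$ (this is exactly what Lemma~\ref{le:Hopf pair}(b) records), the formula $ad(h)(x)=h_{(1)}xS(h_{(2)})$ reduces to the classical adjoint action; I would verify this directly on the algebra generators $E_i,F_i,h_\lambda$, which specialize to $e_i,f_i,h_\lambda$.

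The only real obstacle is bookkeeping around the integral form: one must know that $U_{\bf A}(\gg)$ is a free ${\bf A}$-module, so that $\UU_q(\gg)$ is genuinely an object of $({\bf k},{\bf A})-Mod$ to which the functor $\FF$ applies, and that the Hopf-algebra data of the ${\bf A}$-form specialize as claimed. Both are supplied by Lemma~\ref{le:Hopf pair}, so no new computation beyond the generator-level check is needed.
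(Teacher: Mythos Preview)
The paper states this lemma without proof, treating it as a routine consequence of the surrounding material. Your proposal supplies exactly the argument one would expect: apply the general Hopf-algebraic fact that $ad(h)(x)=h_{(1)}xS(h_{(2)})$ makes any Hopf algebra a module algebra over itself, use Lemma~\ref{le:Hopf pair}(a) to see that $U_{\bf A}(\gg)$ is $ad$-stable, and then push through the dequantization functor of Lemma~\ref{le:Hopf modules} together with Lemma~\ref{le:Hopf pair}(b) to identify the classical limit. This is correct and is precisely the intended route.

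One point deserves a cautionary remark. You invoke freeness of $U_{\bf A}(\gg)$ over ${\bf A}$ so that $\UU_q(\gg)$ is an honest object of $({\bf k},{\bf A})\text{-}Mod$. The paper explicitly warns (just before Lemma~\ref{le:Hopf pair}) that in general a $({\bf k},{\bf A})$-Hopf algebra need not be a $({\bf k},{\bf A})$-module for this reason. For the specific integral form $U_{\bf A}(\gg)$ generated by the $E_i$, $F_i$, $h_\lambda$, freeness does hold via a PBW basis for the integral form, but this is a genuine input rather than a formality; since you flag it and attribute it correctly, the argument stands.
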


  The previous lemma has the following consequence.

  \begin{proposition}
  \label{pr:c-l-algebra}
  Let $(V,L)$ be a finite-dimensional $\UU_{q}(\gg)$-submodule of $\UU_{q}(\gg)$. Then the  algebra generated by $(V,L)$ is a $\UU_{q}(\gg)$-module algebra. Moreover there exists a $U(\gg)$-module algebra $\overline{A}$ such that $\overline A$ is the classical limit of $A$.
  \end{proposition}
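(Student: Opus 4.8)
The plan is to derive everything from Lemma \ref{le:class. limit} together with the formal properties of the category $({\bf k},{\bf A})-Mod$ recalled in Appendix~\ref{se:appendix}. Write $\UU_q(\gg)=(U_q(\gg),U_{\bf A}(\gg))$, and let $(V,L)$ be as in the statement, so $V\subset U_q(\gg)$ is a finite-dimensional $ad$-stable $\CC(q)$-subspace and $L\subset U_{\bf A}(\gg)$ an $\bf A$-lattice of $V$ which is a $U_{\bf A}(\gg)$-submodule. Let $A_{\bf k}\subset U_q(\gg)$ be the $\CC(q)$-subalgebra generated by $V$ and $A_{\bf A}\subset U_{\bf A}(\gg)$ the $\bf A$-subalgebra generated by $L$, and put $A=(A_{\bf k},A_{\bf A})$. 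The first task is to check that $A$ is a genuine object of $({\bf k},{\bf A})-Mod$: monomials in elements of $L$ span $A_{\bf k}$ over $\CC(q)$, so $A_{\bf A}$ spans $A_{\bf k}$, and $A_{\bf A}$ is a \emph{free} $\bf A$-module because it is an $\bf A$-submodule of $U_{\bf A}(\gg)$, which is free by the PBW theorem for the integral form, and every submodule of a free module over a PID is free (recall $\bf A$ is a local PID). This freeness of $A_{\bf A}$ is the one genuinely technical point; everything else will be formal.

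Next I would observe that $A$ is a $\UU_q(\gg)$-submodule of $\UU_q(\gg)$. By Lemma \ref{le:class. limit} the adjoint action makes $\UU_q(\gg)$ a $\UU_q(\gg)$-module algebra, so for $x,y$ in a submodule one has $ad(h)(xy)=\sum ad(h_{(1)})(x)\cdot ad(h_{(2)})(y)$, which lies in the product of the submodule with itself; iterating, the subalgebra generated by a submodule is again a submodule, and this argument applies simultaneously over $\CC(q)$ with $V$ and over $\bf A$ with $L$. Hence $A$ is a subalgebra of $\UU_q(\gg)$ which is also a $\UU_q(\gg)$-submodule, so the multiplication $\mu\colon A\otimes A\to A$ (with the tensor structure of $({\bf k},{\bf A})-Mod$, under which $A_{\bf A}\otimes_{\bf A}A_{\bf A}$ is a lattice in $A_{\bf k}\otimes_{\CC(q)}A_{\bf k}$) is a morphism in $\UU_q(\gg)-Mod$, and the module-algebra axioms for $\UU_q(\gg)$ restrict to $A$. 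This proves the first assertion, that $A$ is a $\UU_q(\gg)$-module algebra.

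For the classical limit I would apply the tensor functor $\FF$ of Lemma \ref{le:specialization functor}, in the form of its extension $\UU_q(\gg)-Mod\to\overline{\UU_q(\gg)}-Mod$ from Lemma \ref{le:Hopf modules}. Since $\FF$ is a tensor functor it carries the algebra object $(A,\mu)$ to an algebra object $\overline A:=\FF(A)=A_{\bf A}/\mm A_{\bf A}$ with multiplication $\FF(\mu)$; and since $\overline{\UU_q(\gg)}=U(\gg)$ by Lemma \ref{le:Hopf pair}(b), this $\overline A$ is a $U(\gg)$-module algebra, which is by definition the classical limit of $A$. The main obstacle, as indicated, is the verification that $A_{\bf A}$ is free over $\bf A$ so that $A$ is a legitimate object of $({\bf k},{\bf A})-Mod$; once that is in hand, the remaining claims are immediate consequences of the functoriality results of Appendix~\ref{se:appendix}.
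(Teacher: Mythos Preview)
Your proof is correct and follows essentially the same outline as the paper's: verify that the $\bf A$-subalgebra $A_{\bf A}$ generated by $L$ is free over $\bf A$ so that $A$ is an object of $({\bf k},{\bf A})\text{-}Mod$, observe that the adjoint action makes $A$ a $\UU_q(\gg)$-module subalgebra, and then invoke Lemmas~\ref{le:Hopf modules} and~\ref{le:Hopf pair} for the classical limit. The only difference is in the freeness step: the paper cites ``finitely generated torsion-free over a PID is free'' together with torsion-freeness of $U_{\bf A}(\gg)$, whereas you use ``submodule of a free module over a PID is free'' together with the PBW basis of $U_{\bf A}(\gg)$---your formulation is arguably tighter, since $A_{\bf A}$ need not be finitely generated as an $\bf A$-module.
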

    \begin{proof}
    Recall that a finitely generated module over a principal ideal domain is free if it is torsion free. Since  $U_{A}(\gg)$ is torsion free one easily obtains that the  $A$-algebra generated by $L$ is free as an $A$-module and can be given a $({\bf k}, A)$-module structure.  The  $\UU_{q}(\gg)$-action is given by the adjoint action. The second assertion follows from Lemma \ref{le:Hopf modules} and Lemma \ref{le:Hopf pair}. The proposition is proved.
    \end{proof}

Finally, we need the following fact.

\begin{proposition}
\label{pr:c-l--coPoisson}
 The classical limit of  $\UU_{q}(\gg)$ has naturally the structure of a co-Poisson algebra $(U(\gg),\delta)$ such that $\delta(\overline u)\equiv \frac{\Delta(u)-\Delta^{op}(u)}{q-1}(mod q-1)$ for $\overline u\equiv u (mod q-1)\in U_A(\gg)$.
  \end{proposition}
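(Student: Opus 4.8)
The plan is to derive the co-Poisson structure on the classical limit of $\UU_q(\gg)$ directly from the quasi-classical limit of the coproduct on $U_q(\gg)$, combined with the fact, already recorded in Lemma~\ref{le:class. limit}, that $\UU_q(\gg)$ is a $\UU_q(\gg)$-module algebra under the adjoint action whose classical limit is $U(\gg)$. First I would recall that $U_q(\gg)$ is a quantization, in the sense of the definition in Appendix~\ref{se:quant.of LBA}, of the standard co-Poisson Hopf algebra $(U(\gg),\delta)$ attached to the Lie bialgebra $\gg$; in particular, for $u\in U_A(\gg)$ with $\overline u\equiv u\pmod{q-1}$ the element $\tfrac{\Delta(u)-\Delta^{op}(u)}{q-1}$ lies in $U_A(\gg)\otimes_A U_A(\gg)$ and its reduction mod $(q-1)$ is a well-defined element of $U(\gg)\otimes U(\gg)$, which we denote $\delta(\overline u)$. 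The key point to check is that this $\delta$ is independent of the chosen lift $u$: if $u'\equiv u\pmod{q-1}$ then $u-u'=(q-1)w$ for some $w\in U_A(\gg)$, so $\Delta(u-u')-\Delta^{op}(u-u')=(q-1)(\Delta(w)-\Delta^{op}(w))$, and dividing by $(q-1)$ and reducing mod $(q-1)$ annihilates this difference since $\Delta(w)-\Delta^{op}(w)\in U_A(\gg)^{\otimes 2}$ already (it need not vanish, but it is killed by the extra factor of $(q-1)$). Hence $\delta:U(\gg)\to U(\gg)\otimes U(\gg)$ is well defined.

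Next I would verify the three properties that make $(U(\gg),\delta)$ a co-Poisson bialgebra (Definition~\ref{def:co-decoratedstuff}): co-skew-symmetry $\delta=-\tau\circ\delta$, the co-Leibniz rule \eqref{co:Leibniz1}, and the co-Jacobi identity \eqref{eq:co-Jacobi}. Co-skew-symmetry is immediate from the defining formula, since $\Delta(u)-\Delta^{op}(u)$ is manifestly anti-invariant under $\tau$. The co-Leibniz compatibility with $\Delta$ follows from applying $(\Delta\otimes\mathrm{id})$ and $(\mathrm{id}\otimes\Delta)$ to $\Delta(u)-\Delta^{op}(u)$, using coassociativity of $\Delta$ in $U_q(\gg)$, dividing by $(q-1)$ and passing to the limit; the cocommutativity of the limit $U(\gg)$ is exactly what makes the two terms on the right-hand side of \eqref{co:Leibniz1} combine correctly. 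For co-Jacobi, I would use that $U_q(\gg)$ is coassociative, so $(\mathrm{id}\otimes\Delta)\Delta=(\Delta\otimes\mathrm{id})\Delta$ exactly, expand the alternating sum of $(\mathrm{id}\otimes\delta)\circ\delta$ in terms of iterated applications of $\tfrac{\Delta-\Delta^{op}}{q-1}$, and observe that the order-$(q-1)^0$ and order-$(q-1)^1$ contributions cancel by symmetry (this is the standard computation showing that the quasi-classical limit of an associative, coassociative bialgebra deformation is a co-Poisson bialgebra, cf.\ \cite[ch.3]{Ch-P} or \cite[ch.8.5]{ES}). The compatibility \eqref{eq:co-Leibniz} of $\delta$ with the multiplication of $U(\gg)$ follows because $\Delta$ is an algebra homomorphism on $U_q(\gg)$: expanding $\Delta(uv)-\Delta^{op}(uv)=\Delta(u)\Delta(v)-\Delta^{op}(u)\Delta^{op}(v)$ and using that $\Delta(u),\Delta^{op}(u)$ both reduce to $\Delta(\overline u)$ mod $(q-1)$ yields, after dividing by $(q-1)$, the Leibniz rule $\delta(\overline u\,\overline v)=\delta(\overline u)\Delta(\overline v)+\Delta(\overline u)\delta(\overline v)$.

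Finally, since $\UU_q(\gg)$ is a module algebra over itself under the adjoint action and its classical limit is the $U(\gg)$-module algebra $U(\gg)$ (Lemma~\ref{le:class. limit}), the co-Poisson structure $\delta$ is automatically compatible with the adjoint $U(\gg)$-action, so $(U(\gg),\delta)$ is indeed a co-Poisson algebra in the sense required. I expect the main obstacle to be the co-Jacobi identity: one must track the $(q-1)$-expansion carefully enough to see that the associator terms — which would obstruct co-Jacobi for a merely \emph{quasi}-coassociative deformation — do not appear here because $U_q(\gg)$ is genuinely coassociative, and that the surviving lowest-order term is precisely the alternating sum that vanishes. All the remaining verifications are the routine ``quasi-classical limit'' manipulations and can be cited from \cite{Ch-P} or \cite{ES}; the content specific to this paper is just the well-definedness of $\delta$ on $U(\gg)=\overline{\UU_q(\gg)}$ and the identification of $\delta$ with the stated formula, both of which follow from Lemma~\ref{le:Hopf pair} and Lemma~\ref{le:class. limit}.
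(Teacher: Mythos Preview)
The paper does not actually prove this proposition: it is stated at the end of Appendix~2 as a known fact, without any argument, and is used earlier only by citation. So there is no ``paper's own proof'' to compare against; your proposal supplies precisely the standard quasi-classical limit argument (as in \cite[ch.~6]{Ch-P} or \cite[ch.~8.5]{ES}) that the paper is implicitly invoking.

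Your outline is correct in structure, but there is one small slip in the well-definedness step. When you change lift by $u'=u+(q-1)w$, the difference of the two candidate values of $\delta$ is $\Delta(w)-\Delta^{op}(w)$ \emph{after} you have already divided by $(q-1)$; there is no ``extra factor of $(q-1)$'' left to kill it. The reason this difference vanishes modulo $(q-1)$ is rather that the classical limit $U(\gg)$ is cocommutative, so $\Delta(w)-\Delta^{op}(w)\in(q-1)\,U_{\bf A}(\gg)^{\otimes 2}$ for every $w\in U_{\bf A}(\gg)$. This is of course the same fact you need in order to know that $\tfrac{\Delta(u)-\Delta^{op}(u)}{q-1}$ lands in $U_{\bf A}(\gg)^{\otimes 2}$ to begin with, so once you have stated that clearly the well-definedness is immediate. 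With this correction your sketch is complete and matches what the paper takes for granted.
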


 \section{Appendix 3: The quantized universal enveloping algebra $U_q(\gg)$}
 \label{se:appendix2}
 We start with the definition of the quantized enveloping algebra
associated with a complex reductive Lie algebra $\gg$ (our standard
reference here will be \cite{Jo}). Let $\hh\subset \gg$
be a Cartan subalgebra,  $P(\gg)$ the weight lattice, as introduced above, and let $A=(a_{ij})$ be the Cartan matrix
for $\gg$. Additionally, let $(\cdot,\cdot)$ be the standard non-degenerate symmetric bilinear form on $\hh$.

The {\it quantized enveloping algebra} $U$ is a $\CC(q)$-algebra generated
by the elements $E_i$ and $F_i$ for $i \in [1,r]$, and $K_\lambda $ for $\lambda \in P(\gg)$,
subject to the following relations:
$K_\lambda  K_\mu = K_{\lambda +\mu}, \,\, K_0 = 1$
for $\lambda , \mu \in P$; $K_\lambda E_i =q^{(\alpha_i\,,\,\lambda)} E_i K_\lambda,
\,\, K_\lambda F_i =q^{-(\alpha_i\,,\,\lambda)} F_i K_\lambda
$
for $i \in [1,r]$ and $\lambda\in P$;
\begin{equation}
\label{eq:upper lower relations}
E_i,F_j-F_jE_i=\delta_{ij}\frac{K_{\alpha_i}- K_{-\alpha_i}}{q^{d_i}-q^{-d_i}}
\end{equation}
for $i,j \in [1,r]$, where  $d_i=(\alpha_i\, ,\,\alpha_i)$;
and the {\it quantum Serre relations}
\begin{equation}
\label{eq:quantum Serre relations}
\sum_{p=0}^{1-a_{ij}} (-1)^p 
E_i^{(1-a_{ij}-p)} E_j E_i^{(p)} = 0,~\sum_{p=0}^{1-a_{ij}} (-1)^p 
F_i^{(1-a_{ij}-p)} F_j F_i^{(p)} = 0
\end{equation}
for $i \neq j$, where
the notation $X_i^{(p)}$ stands for the \emph{divided power}
\begin{equation}
\label{eq:divided-power}
X_i^{(p)} = \frac{X^p}{(1)_i \cdots (p)_i}, \quad
(k)_i = \frac{q^{kd_i}-q^{-kd_i}}{q^{d_i}-q^{-d_i}} \ .
\end{equation}

%

The algebra $U$ is a $q$-deformation of the universal enveloping algebra of
the reductive Lie algebra~$\gg$, so it is commonly
denoted by $U = U_q(\gg)$.
It has a natural structure of a bialgebra with the co-multiplication $\Delta:U\to U\otimes U$
and the co-unit homomorphism  $\varepsilon:U\to \QQ(q)$
given by
\begin{equation}
\label{eq:coproduct}
\Delta(E_i)=E_i\otimes K_{-\alpha_i}+K_{\alpha_i}\otimes E_i\ ,\quad
\Delta(F_i)=F_i\otimes K_{-\alpha_i}+ K_{\alpha_i}\otimes F_i, \, \Delta(K_\lambda)=
K_\lambda\otimes K_\lambda \ ,
\end{equation}
\begin{equation}
\label{eq:counit}
\varepsilon(E_i)=\varepsilon(F_i)=0, \quad \varepsilon(K_\lambda)=1\ .
\end{equation}
In fact, $U$ is a Hopf algebra with the antipode anti-homomorphism $S: U \to U$ given by
\begin{equation}
\label{eq:antipode}
 S(E_i) = -q_i^{-1} E_i, \,\, S(F_i) = -qF_i, \,\,
S(K_\lambda) = K_{-\lambda}\ .
\end{equation}

Let $U^-$ (resp.~$U^0$; $U^+$) be the $\QQ(q)$-subalgebra of~$U$ generated by
$F_1, \dots, F_r$ (resp. by~$K_\lambda \, (\lambda\in P)$; by $E_1, \dots, E_r$).
It is well-known that $U=U^-\cdot U^0\cdot U^+$ (more precisely,
the multiplication map induces an isomorphism $U^-\otimes U^0\otimes U^+ \to U$).



We will consider the full sub-category  $\OO_{f}$ of the category
$U_q(\gg)-Mod$. The objects of $\OO_{f}$ are finite-dimensional
$U_{q}(\gg)$-modules $V^q$ having a weight decomposition
$$V^q=\oplus_{\mu\in P} V^q(\mu)\ ,$$
where each $K_\lambda$ acts on each {\it weight space} $V^q(\mu)$ by
the multiplication with $q^{(\lambda\,|\,\mu)}$  (see e.g.,
\cite{brown-goodearl}[I.6.12]). The category $\OO_{f}$ is semisimple
and the irreducible objects $V^q_\lambda$ are  generated by highest
weight spaces $V^q_\lambda(\lambda)=\CC(q)\cdot v_\lambda$, where
$\lambda$ is a {\it dominant weight}, i.e, $\lambda$ belongs to
$P^+=\{\lambda\in P:(\lambda\,|\,\alpha_i)\ge 0~ \forall ~i\in
[1,r]\}$, the monoid of dominant weights.


\begin{thebibliography}{xxx}



\bibitem{BD} A.~Belavin and V.~Drinfeld, Triangle equations and simple Lie algebras., Soviet Sci.~Rev.~Sect.~C, Math.~Phys.~Rev.~4 (1984). 93--165





\bibitem{BZ}  A.~Berenstein and S. ~Zwicknagl,
            Braided Symmetric and Exterior Algebras,
             Trans.~Amer.~Math.~Soc.~360 (2008), 3429-3472.
\bibitem{brown-goodearl}
K.~Brown and K.~Goodearl, Lectures on algebraic quantum groups,
Birkh\"auser, 2002.
\bibitem{Bou} N.~ Bourbaki, Groupes et Algebres de Lie, Masson, 1981.
\bibitem{Ch-P} V.~Chari and A.~Pressley, A Guide to Quantum Groups, Cambridge University Press, 1994.
\bibitem{Don}J.~Donin, Double Quantization on the coadjoint representation of $sl_{n}$, Quantum Groups and Integrable Systems, Part I (Prague 1997) , Czechoslovak J. Phys. 47(1997), no.~11, 1115--1122
\bibitem{DR1} V.~Drinfel'd, Quasi-Hopf Algebras, Leningrad Math.~Journal, 1 (1990), no.~6.
\bibitem{DR3} V.~Drinfeld, On quasitriangular quasi-Hopf algebras and a groups closely related to $GAl(\overline{\QQ}/\QQ)$, Leningrad Math.~J.  2(1991), no.~4 829--860.

\bibitem{EH} B.~Enriquez and G.~Halbout, Quantization of coboundary Lie bialgebras, arxiv:math/06037401 (to appear in Annals of Math).
\bibitem{EK} P.~Etingof and D.~Kazhdan, Quantization of Lie bialgebras. I. Selecta Math.~(N.S.)2 (1996), no.~1, 1--41.
\bibitem{EK2} P.~Etingof and D.~Kazhdan, Quantization of Lie bialgebras. II, III. Selecta Math.~(N.S.) 4 (1998), no.~2, 213--231, 233--269.
\bibitem{ES} P.~Etingof and O.~Schiffmann, Lectures on Quantum Groups, Lectures in Mathematical Physics, International Press, Boston, 1998.
\bibitem{GY} K.R.~Goodearl and M.~Yakimov, Poisson structures on affine spaces and flag\\ varieties. II. general case, math.QA0509075 (to apppear in Transactions of the AMS).
\bibitem{HK}A.~Henriques and J.~Kamnitzer, Crystals and coboundary categories. Duke Math. J. 132 (2006), no. 2, 191--216.
\bibitem{HY}T.~Hodges and M.~Yakimov,The double and dual of a quasitriangular Lie bialgebra, Math.~Res.~Lett.~8 (2001), 91-105.
\bibitem{HY1}T.~Hodges and M.~Yakimov, Triangular Poisson structures on Lie groups and symplectic reduction, Noncommutative geometry and representation theory in mathematical physics, 123--134, Contemporary Mathematics 391.
\bibitem{Hor}K.~Horton,
The prime and primitive spectra of multiparameter quantum symplectic and Euclidean spaces, Comm.~Algebra 31 (2003), no.~10, 4713--4743.

\bibitem{Ho} R.~Howe, Perspectives on invariant theory: Schur duality, multiplicity-free actions and beyond, Isr.~Math.~Conf.~Proc.~8 1995, 1--182.
\bibitem{Jo} A.~Joseph, Quantum Groups and Their Primitive Ideals, Springer, Heidelberg, 1995.
\bibitem{JMO} N.~Jing, K.~Misra and M.~Okado, q-Wedge modules for quantized enveloping\\ algebras of classical type, J.~Algebra 230 (2000), no.~2, 518--539.
\bibitem{JL1} A.~Joseph and G.~Letzter, Local finiteness of the adjoint action for quantized enveloping algebras. J.~Algebra 153, (1992), no.~2, 289--318.
\bibitem{JL} A.~Joseph and G.~Letzter, Separation of variables for quantized enveloping algebras, Am.~J.~Math.~116 (1994), 127--177.
\bibitem{Kam} A.~Kamita, Quantum Deformations of Certain Prehomogeneous Vector Spaces III, Hiroshima Math.~J.~30 (2000), 79--105.
\bibitem{KT1} J.~Kamnitzer and P.~Tingley, The crystal commutor and Drinfeld's unitarized $R$-matrix, preprint arxiv:math:0707.2248 (to appear in Journal of Algebraic Combinatorics).
\bibitem{K-M}P.~P.~Kulish and A.~I.~Mudrov,  Quantization of inhomogeneous Lie bialgebras. J.~Geom.~Phys.~42 (2002), no.~1-2, 64--77.
\bibitem{Lyu-Sud}V.~Lyubashenko and A.~ Sudbery,  Generalized Lie algebras of type $A_n$. J.~Math.~Phys. 39 (1998), no.~6, 3487--3504.
\bibitem{M} I.~Musson, Ring theoretic properties of the coordinate rings of quantum symplectic and Euclidean space,  Ring Theory, Proc. Biennial Ohio State-Denison Conf. 1992 (S.K. Jain and S.T.Rizvi, eds.), World Scientific, Singapore, 1993, 248--258.
\bibitem{Nou} M.~ Noumi, Macdonald's Symmetric Polynomials as Zonal Spherical Functions on some Quantum Homogeneous Spaces,  Adv.~Math., 123 (1996), 16--77.
\bibitem{R} N.~Reshetikhin, Quantization of Lie bialgebras, Duke Math. J. Internat.~Math.~Res.~Notices 7, 143--151.
\bibitem{RTF} N.~Reshitikhin, L.~A.~Takhtadzhyan and L.~D.~Faddeev, Quantization of Lie groups and Lie algebras, Leningrad Math.~J.~, 1 (1990), 193--225.
\bibitem{R-D}
O.~Rossi-Doria, A $U_q(sl(2))$-representation with no quantum
symmetric algebra,  Atti Accad.~Naz.~Lincei~Cl.~Sci.~Fis.~Mat.~Natur. Rend.~Lincei (9) Mat.~Appl.~ 10 (1999), no.~1, 5--9.
\bibitem{Str} E.~ Strickland, Classical Invariant Theory for the Quantum Symplectic Group, Adv.~Math.~123 (1996), 78--90.
\bibitem{Stem} J.~ Stembridge, On the classification of multiplicity-free exterior algebras. Int.~Math.~Res.~ Not.~ 2003, no. ~40, 2181--2191.
\bibitem{Van} M.~Vancliff, The Non-commutative Algebraic Geometry  of some Quadratic Algebras, PhD-thesis, University of Washington, Seattle, WA,USA 1993.
\bibitem{Y} M.~Yakimov, Symplectic leaves of complex reductive Poisson-Lie groups, Duke Math.~J.~112 (2002), 453-509.
\bibitem{ZW} S.~Zwicknagl, R-matrix Poisson Structures and their Deformations, Adv.~Math., DOI 10.1016/j.aim.2008.08.006.
\end{thebibliography}
\end{document}